\newcommand{\nc}{\newcommand}
\nc{\one}{\mbox{\bf 1}}
\nc{\invtensor}{\underset{\leftarrow}{\otimes}}
\nc{\const}{\operatorname{const}}
\nc{\ad}{\operatorname{ad}}
\nc{\tr}{\operatorname{tr}}
\nc{\tp}{\operatorname{top}}
\nc{\rank}{\operatorname{rank}}
\nc{\corank}{\operatorname{corank}}
\nc{\codim}{\operatorname{codim}}
\nc{\sdim}{\operatorname{sdim}}
\nc{\mult}{\operatorname{mult}}
\nc{\spn}{\operatorname{span}}
\nc{\Sym}{\operatorname{Sym}}
\nc{\sym}{\operatorname{sym}}
\nc{\id}{\operatorname{id}}
\nc{\Id}{\operatorname{Id}}
\nc{\Ree}{\operatorname{Re}}
\nc{\htt}{\operatorname{ht}}
\nc{\sch}{\operatorname{sch}}
\nc{\str}{\operatorname{str}}
\nc{\Ker}{\operatorname{Ker}}
\nc{\rker}{\operatorname{rKer}}
\nc{\im}{\operatorname{Im}}
\nc{\osp}{\mathfrak{osp}}
\nc{\sgn}{\operatorname{sgn}}
\nc{\F}{\operatorname{F}}
\nc{\Mod}{\operatorname{Mod}}
\nc{\Mat}{\operatorname{Mat}}
\nc{\Soc}{\operatorname{Soc}}
\nc{\Inj}{\operatorname{Inj}}
\nc{\Hom}{\operatorname{Hom}}
\nc{\End}{\operatorname{End}}
\nc{\supp}{\operatorname{supp}}
\nc{\Card}{\operatorname{Card}}
\nc{\Ann}{\operatorname{Ann}}
\nc{\Ind}{\operatorname{Ind}}
\nc{\Coind}{\operatorname{Coind}}
\nc{\Res}{\operatorname{Res}}
\nc{\wt}{\operatorname{wt}}
\nc{\ch}{\operatorname{ch}}
\nc{\Stab}{\operatorname{Stab}}
\nc{\Sch}{{\mathcal S}\mbox{\em ch}}
\nc{\Irr}{\operatorname{Irr}}
\nc{\Spec}{\operatorname{Spec}}
\nc{\Prim}{\operatorname{Prim}}
\nc{\Aut}{\operatorname{Aut}}
\nc{\Ext}{\operatorname{Ext}}
\nc{\Fract}{\operatorname{Fract}}
\nc{\gr}{\operatorname{gr}}
\nc{\deff}{\operatorname{def}}
\nc{\HC}{\operatorname{HC}}
\nc{\red}{\operatorname{red}}
\nc{\wdchi}{\widetilde{\chi}}
\nc{\wdH}{\widetilde{H}}
\nc{\wdN}{\widetilde{N}}
\nc{\wdM}{\widetilde{M}}
\nc{\wdO}{\widetilde{O}}
\nc{\wdR}{\widetilde{R}}
\nc{\wdS}{\widetilde{S}}
\nc{\wdV}{\widetilde{V}}
\nc{\wdC}{\widetilde{C}}
\nc{\Obj}{\operatorname{Obj}}
\nc{\Dglie}{\operatorname{{\mathcal D}glie}}
\nc{\Fin}{\operatorname{{\mathcal F}in}}
\nc{\Adm}{\operatorname{\mathcal{A}dm}}
\nc{\cZ}{\mathcal{Z}}
\nc{\Sg}{{\cS(\fg)}}
\nc{\Shg}{{\cS(\fhg)}}
\nc{\Ug}{{\cU(\fg)}}
\nc{\Uhg}{{\cU(\fhg)}}
\nc{\Sh}{{\cS(\fh)}}
\nc{\Uh}{{\cU(\fh)}}
\nc{\Uhh}{{\cU(\fhh)}}
\nc{\Zg}{{{\mathcal{Z}}(\fg)}}
\nc{\Vir}{{\mathcal{V}ir}}
\nc{\NS}{{\mathcal{N}S}}
\nc{\tZg}{{\widetilde{\mathcal Z}({\mathfrak g})}}
\nc{\Zk}{{\mathcal Z}({\mathfrak k})}
\newcommand{\CO}{\mathcal{O}}
\newcommand{\CB}{\mathcal{B}}
\nc{\Up}{{\mathcal U}({\mathfrak p})}
\nc{\Ah}{{\mathcal A}({\mathfrak h})}
\nc{\Ag}{{\mathcal A}({\mathfrak g})}
\nc{\Ap}{{\mathcal A}({\mathfrak p})}
\nc{\Zp}{{\mathcal Z}({\mathfrak p})}
\nc{\cR}{\mathcal R}
\nc{\cS}{\mathcal S}
\nc{\cT}{\mathcal{T}}
\nc{\cY}{\mathcal Y}
\nc{\cA}{\mathcal A}
\nc{\cU}{\mathcal U}
\nc{\cH}{\mathcal H}
\nc{\cM}{\mathcal M}
\nc{\cC}{\mathcal C}
\nc{\cF}{\mathcal F}
\nc{\fg}{\mathfrak g}
\nc{\fo}{\mathfrak o}
\nc{\CR}{\mathcal R}
\nc{\Cl}{\mathcal {C}\ell}
\nc{\cW}{\mathcal{W}}
\nc{\bM}{\mathbf{M}}
\nc{\bL}{\mathbf{L}}
\nc{\bN}{\mathbf{N}}
\nc{\zq}{\mathpzc q}
\nc{\fl}{\mathfrak l}
\nc{\fn}{\mathfrak n}
\nc{\fm}{\mathfrak m}
\nc{\fp}{\mathfrak p}
\nc{\fh}{\mathfrak h}
\nc{\ft}{\mathfrak t}
\nc{\fk}{\mathfrak k}
\nc{\fb}{\mathfrak b}
\nc{\fs}{\mathfrak s}
\nc{\fB}{\mathfrak B}
\nc{\vareps}{\varepsilon}
\nc{\varesp}{\varepsilon}
\nc{\veps}{\varepsilon}
\nc{\fsl}{\mathfrak{sl}}
\nc{\fgl}{\mathfrak{gl}}
\nc{\fso}{\mathfrak{so}}
\nc{\fpq}{\mathfrak{pq}}
\nc{\fq}{\mathfrak q}
\nc{\fsq}{\mathfrak{sq}}
\nc{\fpsq}{\mathfrak{psq}}
\nc{\fhg}{\hat{\fg}}
\nc{\fhn}{\hat{\fn}}
\nc{\fhh}{\hat{\fh}}
\nc{\fhb}{\hat{\fb}}
\nc{\hrho}{\hat{\rho}}
\nc{\hsl}{\hat{\fsl}}
\nc{\fpo}{\mathfrak{po}}
\nc{\dirlim}{\underset{\rightarrow}{\lim}\,}
\nc{\nen}{\newenvironment}
\nc{\ol}{\overline}
\nc{\ul}{\underline}
\nc{\ra}{\rightarrow}
\nc{\lra}{\longrightarrow}
\nc{\Lra}{\Longrightarrow}
\nc{\bo}{\bar{1}}
\nc{\Lla}{\Longleftarrow}
\nc{\Llra}{\Longleftrightarrow}
\nc{\thla}{\twoheadleftarrow}
\nc{\lang}{(}
\nc{\rang}{)}
\nc{\hra}{\hookrightarrow}
\nc{\iso}{\overset{\sim}{\lra}}
\nc{\ssubset}{\underset{\not=}{\subset}}
\nc{\vac}{|0\rang}
\nc{\Thm}[1]{Theorem~\ref{#1}}
\nc{\Prop}[1]{Proposition~\ref{#1}}
\nc{\Lem}[1]{Lemma~\ref{#1}}
\nc{\Cor}[1]{Corollary~\ref{#1}}
\nc{\Conj}[1]{Conjecture~\ref{#1}}
\nc{\Claim}[1]{Claim~\ref{#1}}
\nc{\Defn}[1]{Definition~\ref{#1}}
\nc{\Exa}[1]{Example~\ref{#1}}
\nc{\Rem}[1]{Remark~\ref{#1}}
\nc{\Note}[1]{Note~\ref{#1}}
\nc{\Quest}[1]{Question~\ref{#1}}
\nc{\Hyp}[1]{Hypoth\`ese~\ref{#1}}
\title[Simple bounded highest weight modules]{Simple bounded highest weight modules of basic classical Lie superalgebras}
\author{\sc Maria Gorelik}
\address
{Department of Mathematics \\ The Weizmann Institute of Science \\ Rehovot 7610001\\ Israel}
\thanks{Research of the first author was supported in part by the Minerva foundation with funding from the Federal German Ministry for Education and Research}
\email{maria.gorelik@weizmann.ac.il}
\author{\sc Dimitar Grantcharov}
\address
{Department of Mathematics               \\
University of Texas at Arlington  \\
Arlington\\ TX~76019 \\ USA}
\thanks{
Research of the second author was supported in part by Simons Collaboration Grant 358245.}
\email{grandim@uta.edu}
\subjclass[2010]{Primary 17B10}
\keywords{Lie superalgebras, odd reflections, weight modules, character formulas.}
\begin{document}

\maketitle

\begin{abstract} We classify all simple bounded highest weight modules of a basic classical Lie superalgebra $\mathfrak g$. In particular, our classification leads to the classification of the simple weight modules with finite weight multiplicities over all classical Lie superalgebras. We also obtain some character formulas of strongly typical bounded highest weight modules of $\mathfrak g$.
\end{abstract}

\section{Introduction}
The representation theory of Lie superalgebras have been extensively studied  in the last several decades. Remarkable progress has been  made on the study of the (super)category $\mathcal O$, see for example \cite{S2} and the references therein. On the other hand, the theory of general weight  modules of Lie superalgebras is still at its beginning stage. An important advancement in this direction was made in 2000 in \cite{DMP} where the classification of the simple weight modules with finite weight multiplicities over classical Lie superalgebras was reduced to the classification of the so-called simple cuspidal modules. This result is the superanalog of the Fernando-Futorny parabolic induction theorem for Lie algebras. The classification of the simple cuspidal modules over reductive finite-dimensional simple Lie algebras was completed by Mathieu, \cite{M}, following works of Benkart, Britten, Fernando, Futorny, Lemire, Joseph, and others, \cite{BBL}, \cite{BL}, \cite{F}, \cite{Fu}, \cite{Jo}. One important result in \cite{M} is that every simple cuspidal module is a twisted localization of a simple bounded highest weight module, where, a \emph{bounded module} by definition is a module whose set of weight multiplicities is uniformly bounded. The maximum weight multiplicity of a bounded module is called the \emph{degree} of the module.

The presentation of the simple cuspidal modules via twisted localization of highest weight modules was extended to the case of classical Lie superalgebras  in \cite{Gr}. In this way, the classification of simple weight modules with finite weight multiplicities of a classical Lie superlagebra $\mathfrak k$ is reduced to the classification of the simple bounded highest weight modules of $\mathfrak k$. The latter modules are easily classified for Lie superalgebras of type I. For Lie superalgebras of type II a classification is obtained for  Lie supealgebras of  Q-type in \cite{GG}, for the exceptional Lie superalgebra $D(2,1,a)$ in \cite{H}, and for $\mathfrak{osp} (1| 2n)$ in \cite{FGG}.  The main goal of this paper is to complete the classification in all remaining cases. In particular, by classifying the simple bounded highest weight modules of $\mathfrak{osp} (m| 2n)$,   $m=3,4,5,6$, we complete the classification of all simple weight modules with finite weight multiplicities over all classical Lie superalgebras.

Apart from the classification of simple weight modules, the category of bounded modules is interesting on its own. We believe that the results in the present paper mark the first step towards the systematic study of this category. Note that in the case of Lie algebras, bounded modules have nice geometric realizations and an equivalence of categories of bounded modules and weight modules of algebras of twisted differential operators was established in \cite{GrS1}, \cite{GrS2}. We expect that similar geometric properties of the category of bounded modules of classical Lie superlagebras hold as well.   We also expect that, like in the Lie algebra case, the injective objects in the category of bounded modules will be obtained via twisted localization functors.

We remark that in \cite{Co}, there is a classification and explicit examples of all  simple highest weight modules of degree $1$. One should note that in this classification there is a minor gap in the proof for lower-rank cases.

Most of the new results in this paper concern the highest weight bounded modules of the orthosymplectic Lie superalgebras $\mathfrak{osp} (m| 2n)$. One should note though that the above mentioned  classification is new also for the exceptional Lie superlagebras $F(4)$ an $G(3)$.  In addition to the completion of this classification, we prove that the category of $\mathcal O$-bounded $\mathfrak{osp} (1| 2n)$-modules is semisimple for $n>1$. Last, but not least, we establish explicit character formula for  strongly typical bounded modules over all basic classical Lie superalgebras.

 A crucial part in the paper plays the notion of \emph{the nonisotropic algebra} $\fg_{ni}$ associated to a Kac-Moody superalgebra $\fg$. Most  of the criteria for boundedness are expressed in terms of the components of $\fg_{ni}$.  Also, for our classification we use distinguished sets of simple roots - simple roots that contain at most one isotropic root. One of the tools used  in the paper are Enright functors - localization type of functors introduced originally by Enright in  \cite{En} for classical Lie algebras and later generalized by  \cite{IK} for Kac-Moody superalgebras.

 Our main result is~\Thm{thmbound} which describes simple highest weight bounded modules over basic classical Lie superalgebras
in terms of the highest weights with respect to
the distinguished Borel subalgebras. For all $\fg$ except for
$\fg = \mathfrak{osp}(m|2n), m\geq 5,n\geq 2$,
 we give a simple criterion, ~\Cor{cor1}. On the other hand,
 \Thm{thms2} reduces
 the remaining case $\mathfrak{osp}(m|2n), m\geq 5,n\geq 2$ to the case
 $\mathfrak{osp}(m|4)$. In Section~\ref{strtyp} we provide character formula and an  upper bound of the degree of
 a strongly typical simple highest
weight bounded module for $\mathfrak{osp}(m|2n)$.
 In Section~\ref{deg} we obtain an upper bound of the
 degree of the simple $\mathcal O$-bounded modules for the cases
 $\mathfrak{osp}(m|2n)$ with $m=3,4$ or $n=1$.

{\em Acknowledgment.}
We are grateful to A.~Joseph, I.~Penkov, and V.~Serganova for  the helpful discussions.  We also 
acknowledge the hospitality and excellent working conditions at the Weizmann Institute of Science and the University of Texas at Arlington where parts of this work were completed.

\section{Preliminaries}
Let $\fg=\fg_0\oplus\fg_1$ be a finite-dimensional Kac-Moody  superalgebra
with a fixed non-degenerate invariant bilinear form, i.e. one of the Lie superalgebras
$$\mathfrak{sl}(m|n), m\not=n, \mathfrak{gl}(n|n), \mathfrak{osp}(m|2n), D(2,1,a),F(4), G(3).$$

We fix a triangular decomposition  $\fg_0=\fn_0^-\oplus\fh\oplus\fn_0^+$
and  consider all  compatible
triangular decompositions of $\fg$, i.e. $\fg=\fn^-\oplus\fh\oplus\fn^+$ with $\fn^+_0=\fn^+\cap\fg_0$. Recall that any two triangular
decompositions  are connected by a chain of
odd reflections, see~\cite{Sint}.
We denote by $\Delta$ the root system of $\fg$ and by
$\Delta_0$ (resp., by $\Delta_1$) the set of even (resp., odd) roots.
We denote by $\Pi_0$ the set of simple roots for $\fg_0$ ($\Pi_0$ is fixed, since $\fn_0^+$ is fixed) and by $\Sigma$
a set of a base of $\Delta$.

We say that $\fg$ is indecomposable if $\Sigma$ is connected.
An indecomposable finite-dimensional Kac-Moody superalgebra is isomorphic either
to $\fgl(n|n)$ or to a basic classical Lie superalgebra which are not
equal to $\mathfrak{psl}(n|n)$.
In all examples we will use the standard notation for root systems, see~\cite{K1}.

\subsection{Notation}\label{partord}
 We set
$$\Delta_{ni}:=\{\alpha\in\Delta|\ \|\alpha\|^2\not=0\}$$
to be the set of nonisotropic roots. For $\alpha\in\Delta_{ni}$ we introduce
$\alpha^{\vee}:=\frac{2\alpha}{(\alpha,\alpha)}$ and
 the reflection $r_{\alpha}\in GL(\fh^*)$ given by
$r_{\alpha}(\mu):=\mu-(\mu,\alpha^{\vee})\alpha$.
We denote by $W$ the Weyl group of $\Delta$ (the group generated
by the reflections $r_{\alpha}$ with $\alpha\in\Delta_{ni}$).

For a base  $\Sigma$ we denote by $\rho_{\Sigma}$ its Weyl
vector.
For $\lambda\in\fh^*$ we denote by
$L(\Sigma,\lambda)$ the corresponding simple highest weight module.
Note that $L(\Sigma,\lambda)$ is a simple highest weight module for
any base $\Sigma'$ (compatible with $\Pi_0$). In the case when $\Sigma$ is fixed, we write $\rho$ for
$\rho_{\Sigma}$ and $L(\lambda)$
for $L(\Sigma,\lambda)$.  By $M(\lambda) = M(\Sigma, \lambda)$ we denote the corresponding Verma module.

For a fixed base $\Sigma$ we consider the standard
partial order on $\fh^*$:  $\mu\geq \mu'$
if $\mu-\mu'\in\mathbb{Z}_{\geq 0}\Sigma$.

For a $\fg$-module  $N$  with a locally finite action of $\fh$ we set
$$N_{\nu}:=\{v\in N|\ \forall h\in\fh\  (h-\nu(h))v=0\},\ \
\supp(N):=\{\nu\in\fh^*|\ N_{\nu}\not=0\}$$
and say that $v$ has \emph{weight} $\nu$ if $v\in N_{\nu}$.
If all weight spaces
$N_{\nu}$ are finite-dimensional, we set
$$\ch N:=\sum_{\nu\in\fh^*} \dim N_{\nu}e^{\nu}.$$

A $\fg$-module $N$ is called a \emph{weight module} if $N = \bigoplus_{\nu \in \fh^*}N_{\nu}$ and it is
{\em bounded} if it is a weight module and there is $s>0$ such that $\dim N_{\nu}<s$ for all $\nu\in\fh^*$.

\subsection{Categories $\CO, {\CO}^{inf}$}\label{cc}
We denote by ${\CO}^{inf}(\fg)$
 the full category of $\fg$-modules with the
following properties:

(C1) $\fh$ acts diagonally;

(C2) $\fn^+_0$ acts locally nilpotently.

We denote by $\CO(\fg)$ the BGG-category which is  the full category of
$\CO^{inf}$ consisting of finitely generated modules.
Note that $\CO(\fg),{\CO}^{inf}(\fg)$ do not depend on the choice of $\Sigma$.

\subsection{Kac-Moody subalgebras}\label{subsetSigma}
Fix a nonempty subset $\Sigma'\subset\Sigma$ and denote by $\ft$
the subalgebra of $\fg$ generated by $\fg_{\pm\alpha},\alpha\in\Sigma'$.
The algebra $\ft$ is either a Kac-Moody superalgebra or $\fsl(s|s)$;  $\ft\cap\fh$ is
its Cartan subalgebra
and  $\Sigma_{\ft}:=\Sigma'$ is a base; we denote by $\Delta_{\ft}$ the corresponding
root system and by $W(\ft)$ the corresponding Weyl group. One has $\Delta_{\ft}=\Delta\cap (\mathbb{Z}\Sigma')$,
see~\cite{Kbook}, Ex. 1.2.

If $\Sigma'$ is a
connected component of $\Sigma$ we call $\ft$ a {\em component} of $\fg$.

Let $\fg'\subset\fg$ be a Kac-Moody superalgebra with a triangular decomposition
$\fg'=\fn'_-\oplus\fh'\oplus \fn'_+$; we call this a subalgebra with a {\em compatible triangular
decomposition} if $\fh'\subset\fh, \fn'_{\pm}\subset \fn^{\pm}$ and $\fh$ acts diagonally on each root space of $\fg'$.

Note that for $N\in\CO^{inf}(\fg)$ one has $\Res_{\ft}^{\fg} N\in \CO^{inf}(\ft)$. On the other hand,
$\CO(\fg)$ does not have this property in general. However, the property holds in the special case ${\ft}= \fg_0$.
For each $\lambda\in\fh^*$ we denote by $\lambda_{\ft}$ the restriction of
$\lambda$ to $\ft\cap\fh$; we
denote by $M_{\ft}(\lambda_{\ft}),L_{\ft}(\lambda_{\ft})$ the corresponding $\ft$-modules. The following lemma will be useful later.

\subsubsection{}
\begin{lem}{lemPit}
(i) The $\ft$-submodule of
$L(\lambda)$ generated by a highest weight vector of $L(\lambda)$
is isomorphic to $L_{\ft}(\lambda_{\ft})$.

(ii)  Let $\fg'\subset\fg$ be Kac-Moody superalgebras with  compatible
triangular decompositions. A cyclic $\fg'$-submodule of a bounded $\fg$-module is $\fg'$-bounded.
\end{lem}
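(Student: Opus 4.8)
For part (i): The plan is to use the defining universal property of the simple highest weight module. Let $v_\lambda$ be a highest weight vector of $L(\lambda)$ and set $N := \cU(\ft)v_\lambda \subseteq L(\lambda)$. Since $\fh'=\ft\cap\fh$ acts diagonally on $L(\lambda)$ (property (C1)) and $\fn^+_{\ft}=\ft\cap\fn^+$ acts locally nilpotently, in particular $\fn^+_{\ft}\,v_\lambda=0$ because $v_\lambda$ is a highest weight vector for the ambient $\fn^+$; moreover $v_\lambda$ has $\ft\cap\fh$-weight $\lambda_{\ft}$. Hence $N$ is a highest weight $\ft$-module of highest weight $\lambda_{\ft}$, so there is a surjection $M_{\ft}(\lambda_{\ft})\twoheadrightarrow N$, and $N$ has a unique maximal submodule $N'$ (the sum of all submodules not containing the weight-$\lambda_{\ft}$ line), with $N/N'\cong L_{\ft}(\lambda_{\ft})$. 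It remains to show $N'=0$. First I would observe that $N'$ is an $\fh$-stable subspace of $L(\lambda)$ (since $\fh$ acts diagonally and preserves weight spaces, and $N'$ is a sum of weight spaces of $N$ omitting the top one). The key point — and the place where one must be slightly careful — is that $N'$ need not be a $\fg$-submodule of $L(\lambda)$, so one cannot immediately invoke simplicity of $L(\lambda)$. Instead I would argue that any nonzero $\ft$-submodule of $N$ contains a nonzero highest weight vector $w$ for $\ft$ (using local nilpotency of $\fn^+_{\ft}$), and then generate: if $w\notin \FF v_\lambda$, then $\cU(\fg)w$ is a proper nonzero submodule of $L(\lambda)$, contradicting simplicity. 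More precisely: $\cU(\fg)w \ni w \neq 0$ and, since $L(\lambda)$ is a highest weight module generated by $v_\lambda$ with $\lambda$ the unique maximal weight, $\cU(\fg)w$ cannot equal $L(\lambda)$ unless $v_\lambda \in \cU(\fg)w$; but $w$ has a weight $\mu<\lambda$ and $\cU(\fg)$ cannot raise weight above $\mu$ within a weight module unless... — here one uses that $L(\lambda)=\cU(\fn^-)v_\lambda$ has all weights $\leq\lambda$, and a submodule containing a vector of weight $\mu\neq\lambda$ that generated the whole module would force $v_\lambda$ to lie in it; but $v_\lambda$ spans the $\lambda$-weight space and a standard argument (the submodule is itself a highest weight module, hence has a maximal weight, which must then be $\lambda$, forcing it to contain $v_\lambda$) shows this submodule equals $L(\lambda)$ only if it already contained the top vector. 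Thus every nonzero $\ft$-submodule of $N$ contains $v_\lambda$, so $N$ is simple, i.e.\ $N\cong L_{\ft}(\lambda_{\ft})$.

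For part (ii): Let $M$ be a bounded $\fg$-module with bound $s$, and let $M' = \cU(\fg')m$ be a cyclic $\fg'$-submodule. Since $\fh'\subset\fh$ acts diagonally on $M$ and every root space of $\fg'$ is an $\fh$-weight space, $M'$ is an $\fh$-weight module, hence an $\fh'$-weight module, with $M'_\nu \subseteq \bigoplus_{\mu\colon \mu|_{\fh'}=\nu} M_\mu$ for $\nu\in(\fh')^*$. I would then invoke the crucial finiteness input: because $m$ lies in finitely many $\fh$-weight spaces (one, if $m$ is a weight vector; in general finitely many since $M$ is a weight module and $m$ is a finite sum) and $M'=\cU(\fg')m$, all $\fh$-weights of $M'$ lie in the union of finitely many cosets $\mu_i + \ZZ\Sigma_{\fg'}$ where $\mu_i$ are the weights appearing in $m$; the key observation is that these cosets, restricted to $(\fh')^*$, remain distinct cosets of $\ZZ\Sigma_{\fg'}|_{\fh'}$ only if the $\mu_i$ are genuinely different on $\fh'$, but in any case the fibres are controlled: for each $\nu\in(\fh')^*$ the set $\{\mu\in\supp(M') : \mu|_{\fh'}=\nu\}$ is finite — indeed it is contained in $\{\mu_i + \eta : i, \ \eta\in\ZZ\Sigma_{\fg'}, \ (\mu_i+\eta)|_{\fh'}=\nu\}$, and for fixed $i$ there is at most one such $\eta$ in $\ZZ\Sigma_{\fg'}$ provided the restriction map $\ZZ\Sigma_{\fg'}\to(\fh')^*$ is injective, which holds because $\Sigma_{\fg'}\subset\Sigma$ are linearly independent and $\fh'$ is spanned together with the coroots so that $\Sigma_{\fg'}$ restricts injectively (this uses the compatible triangular decomposition). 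Hence $\dim M'_\nu = \sum_{\mu|_{\fh'}=\nu}\dim M'_\mu \leq (\text{number of such }\mu)\cdot s$, and this number is bounded independently of $\nu$ by the number of starting weights in $m$. Therefore $M'$ is bounded.

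The main obstacle I anticipate is part (ii)'s control of the fibres of the restriction $\supp(M')\to(\fh')^*$: one must ensure that infinitely many $\fh$-weights of $M'$ cannot collapse onto a single $\fh'$-weight. This is exactly where the hypothesis that $\fg'$ has a \emph{compatible} triangular decomposition with $\fh'\subset\fh$ is used — it guarantees, via the description $\Delta_{\fg'}=\Delta\cap\ZZ\Sigma_{\fg'}$ from the preceding subsection and the linear independence of a base, that restriction to $\fh'$ is injective on $\ZZ\Sigma_{\fg'}$, so each $\ZZ\Sigma_{\fg'}$-coset maps bijectively to its image. If $m$ is not a weight vector one first replaces $M'$ by the (finitely many) cyclic submodules generated by its weight components, or simply notes that the above estimate with "number of weights of $m$" in place of $1$ still gives a uniform bound. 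The rest of both parts is bookkeeping with weights and the universal property of $L_{\ft}(\lambda_{\ft})$.
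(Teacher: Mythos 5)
Part (ii) of your proposal is essentially the paper's own argument: you reduce to a weight generator, observe that the support of $\cU(\fg')v'$ lies in a single coset of $\mathbb{Z}\Delta'$ (where $\Delta'$ is the set of roots of $\fg$ whose root spaces lie in $\fg'$), and use injectivity of the restriction $\mathbb{Z}\Delta'\to(\fh')^*$ to conclude that each $\fh'$-weight space of the cyclic submodule meets only boundedly many $\fh$-weight spaces of the ambient module. That part is fine.

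Part (i) has a genuine gap at the decisive step. You reduce to showing that a nonzero $\ft$-primitive vector $w\in N=\cU(\ft)v_\lambda$ of weight $\mu<\lambda$ leads to a contradiction, and you try to extract one from the simplicity of $L(\lambda)$ by claiming that $\cU(\fg)w$ would be a proper submodule because ``$\cU(\fg)$ cannot raise weight above $\mu$''. This is false: $\cU(\fn^+)$ raises weights, and $w$ is only known to be annihilated by $\fn^+\cap\ft$, not by all of $\fn^+$. Since $L(\lambda)$ is simple, every nonzero vector generates the whole module, so $\cU(\fg)w=L(\lambda)$ and in particular $v_\lambda\in\cU(\fg)w$; there is no contradiction at this level. (The identity $\cU(\fg)w=\cU(\fn^-)w$, which your ``the submodule is itself a highest weight module'' step implicitly uses, requires $\fn^+w=0$.) The missing ingredient --- and the actual content of the paper's proof --- is the verification that a $\ft$-primitive vector $uv_\lambda$ with $u\in\cU(\fn^-\cap\ft)$ is automatically $\fg$-primitive: for $\alpha\in\Delta^+\setminus\Delta_{\ft}$ and $\beta\in\Delta_{\ft}\cap\Delta^+$ one has $\beta-\alpha\notin\Delta^+$ (as $\alpha$ has a nonzero coefficient on a simple root outside $\Sigma_{\ft}$ while $\beta$ does not), hence $[\fg_{-\beta},\fg_{\alpha}]\subset\fn^+$, and commuting $\fg_{\alpha}$ through $u$ gives $\fg_{\alpha}(uv_\lambda)=0$. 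Only after this does simplicity of $L(\lambda)$ force $uv_\lambda\in\FF v_\lambda$, since a $\fg$-primitive vector of weight strictly less than $\lambda$ would generate a highest weight submodule missing the $\lambda$-weight space. Without this commutator computation your argument for (i) does not close.
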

\begin{proof}
For (i) let  $v$ be a highest weight vector of $L(\lambda)$
and $L'$ be the $\ft$-submodule generated by $v$.
Clearly, $L'$  is a quotient
of $M_{\ft}(\lambda_{\ft})$. Let $uv\in L'$ be
a $\ft$-primitive vector, i.e.
 $u\in \cU(\fn^-\cap\ft)$ is such that
 $(\ft\cap\fn^+) (uv)=0$.
Take $\alpha\in\Delta^+\setminus\Delta_{\ft}$. For each
 $\beta\in\Delta_{\ft}\cap\Delta^+$ one has
$\beta-\alpha\not\in \Delta^+$ which gives
 $[\fg_{-\beta},\fg_{\alpha}]\subset\fn^+$.
 This implies
$\fg_{\alpha}(uv)=0$ and thus $uv$ is a $\fg$-primitive vector.
Therefore $uv$ is proportional to $v$, so $L'$ is simple. This gives (i).

For (ii) let $N$ be a bounded $\fg$-module and
let $N'$ be the $\fg'$-submodule generated by a vector $v'\in N$;
we may (and will) assume that $v'$ is a weight vector.
Recall that $\fh'=\fg'\cap\fh$ is the Cartan subalgebra of $\fg'$.
Set
$$\Delta':=\{\alpha\in\Delta|\ \fg_{\alpha}\subset \fg'\}.$$
Fix $\nu'\in (\fh')^*$ such that $N'_{\nu'}\not=0$. One has
$$\dim N'_{\nu'}=\sum_{\nu\in X} \dim (N_{\nu}\cap N'),$$
where
$$X:=\{\nu\in\supp N|\ \nu|_{\fh\cap\ft}=\nu',\  N_{\nu}\cap N'\not=0\}.$$
For $\nu_1,\nu_2\in X$ one has $(\nu_1-\nu_2)\in\mathbb{Z}\Delta'$,
since $N'$ is a cyclic $\fg'$-module generated by a weight vector, and
$(\nu_1-\nu_2)|_{\fh'}=0$.  Thus
 $\nu_1=\nu_2$, so $X=\{\nu_1\}$ and
$\dim N'_{\nu'}\leq \dim N_{\nu_1}$.
\end{proof}

\subsection{Root subsystems $\Delta(N), \Delta(\lambda)$}\label{rootsys}
A subset $\Delta'\subset {\Delta}_{0}$ is called a {\em root subsystem}
if $r_{\alpha}\beta\in\Delta'$ for any $\alpha,\beta\in\Delta'$.
For a root subsystem $\Delta'$ we denote by $W(\Delta')$ the subgroup
of $W$ generated by $r_{\alpha},\alpha\in\Delta'$. We set
$(\Delta')^+:=\Delta'\cap\Delta^+$ and introduce
$$\Pi((\Delta')^+):=\{\beta\in (\Delta')^+|\ r_{\beta}((\Delta')^+\setminus\{\beta\})=
(\Delta')^+\setminus\{\beta\}\}.$$
The group $W(\Delta')$ is the Coxeter group for $\Pi((\Delta')^+)$
(see, for example,~\cite{KT98}, 2.2.8--2.2.9).

For  $N\in\CO^{inf}$ we set
$$\Delta(N):=\{\alpha\in {\Delta}_{0}|\ \exists \lambda\in\supp(N)\ \text{ s.t. }
(\lambda,\alpha^{\vee})\in \mathbb{Z}\}.
$$
If $N$ is indecomposable, then
$\Delta(N)=\{\alpha\in {\Delta}_{0}|\
(\lambda,\alpha^{\vee})\in \mathbb{Z}\ \  \forall \lambda\in\supp(N)\ \}$,
since for  $\gamma\in\Delta$ and $\alpha\in\Delta_0$
one has $(\gamma,\alpha^{\vee})\in\mathbb{Z}$.
For $\lambda\in\fh^*$ we introduce
$$\Delta(\lambda):=\Delta(L(\lambda))=\{\alpha\in {\Delta}_{0}|\
(\lambda,\alpha^{\vee})\in \mathbb{Z}\}.
$$

By~\cite{Kbook}, Lem. 3.4 for
 a simple module $L$
each root space $\fg_{\alpha}$ acts either injectively or locally
nilpotenly
on $L$.  If for each $\alpha\in\Pi_0$
the root space $\fg_{-\alpha}$ acts locally
nilpotenly on $L(\lambda)$, then $L(\lambda)$ is finite-dimensional.
If $\alpha\in\Pi_0$ is such that $\alpha\in\Sigma$ or
$\frac{\alpha}{2}\in\Sigma$,
then the root space $\fg_{-\alpha}$ acts locally
nilpotenly on $L(\lambda)$ if and only if $\alpha\in\Delta(\lambda)$ and $(\lambda,\alpha^{\vee})\geq 0$.

One readily sees that $\Delta(N)$ is a root subsystem of
${\Delta}$. We set $W(N):=W(\Delta(N))$, $W(\lambda):=
W(\Delta(\lambda))$, and  $\Pi(\lambda):=\Pi(\Delta(\lambda)^+)$.
By Thm. 4.2~\cite{DGK} (the statement and the proof are the same for superalgebras)
one has
\begin{equation}\label{ext}
\Ext^1(L(\nu), L(\nu'))\not=0\
\Longrightarrow\ (\nu'+\rho)\in W(\nu)(\nu+\rho).
\end{equation}

\subsubsection{}\label{maxorb}
It is well known that the orbit $W(\mu)\mu$ contains a unique
maximal element and that $\mu$ is the maximal element in
its orbit $W(\mu)\mu$ if and only if
$(\mu,\alpha^{\vee})\geq 0$ for each
$\alpha\in\Delta(\mu)^+$. Moreover, if $\mu$
is a maximal element in  $W(\mu)\mu$, then
$Stab_W \mu$ is generated by the reflections $r_{\alpha}$
with $\alpha\in\Pi(\mu)$ such that $(\mu,\alpha^{\vee})=0$.

\subsection{Enright functors}
The Enright functors were introduced in~\cite{En}. For Kac-Moody superalgebras
the Enright functors were defined in~\cite{IK}. We will use these functors
in the following context: let $\fp$ be a Lie superalgebra containing
an $\fsl_2$-triple $(e,f,h)$ and  $\cM_a$
be the full subcategory of $\fg$-modules $N$ with the following properties:
$h$ acts diagonally with the eigenvalues in $a+\mathbb{Z}$ and
$e$ acts locally nilpotently. The Enright functor $\cC$
is a covariant functor $\cC:\cM_a\to \cM_{-a}$.
We will use the Enright functor for $\fg$ and $\fsl_2$-triple
corresponding to $\alpha\in\Delta_0$: $f\in\fg_{-\alpha}$,
$h\in\fh$, $e\in \fg_{\alpha}$; in this case we denote this functor
by $\cC_{\alpha}$.  We retain notation of~\S \ref{subsetSigma}.
Note that for indecomposable
$N\in\CO^{inf}$ the condition $\alpha\not\in\Delta(N)$ is equivalent to
$N\in \cM_a$ for $a\not\in\mathbb{Z}$.

We will use the  the following properties of the Enright functors. For the proofs we refer the reader to ~\cite{GS}.

\subsubsection{}\begin{prop}{propen}
(i) If $a\not\in\mathbb{Z}$ then $\cC:\cM_a\iso \cM_{-a}$
is an equivalence of categories.

(ii) If $\fp\subset\fg$ is a subalgebra containing the $\mathfrak{sl}_2$-triple $(e,f,h)$, then
the Enright functors commute with the restriction functor $\Res^{\fg}_{\fp}$. Namely,
$\cC^{\fp}\circ \Res^{\fg}_{\fp}=\Res^{\fg}_{\fp}\circ \ \cC^{\fg}$, where
$\cC^{\fg},\cC^{\fp}$ are Enright functors for $\fg,\fp$, respectively.

(iii) Let $\alpha\in\Pi_0$ be such that  $\alpha\in\Sigma$ or $\alpha/2\in\Sigma$
and  let $\lambda\in\fh^*$  be such that
$\alpha\not\in\Delta(\lambda)$. Then
$\cC_{\alpha}(L(\lambda))=L(r_{\alpha}(\lambda+\rho)-\rho)$ and
$\cC_{\alpha}(L_{\fg_0}(\lambda))=L(r_{\alpha}(\lambda+\rho_0)-\rho_0)$.

(iv)
If $N\in\CO^{inf}$ has a subquotient $L(\lambda)$ and
$\alpha\not\in\Delta(N)$, then $\cC_{\alpha}(L(\lambda))$ is a subquotient of
$\cC_{\alpha}(N)$.
\end{prop}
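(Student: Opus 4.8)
The plan is to argue from the construction of $\cC_{\alpha}$ (cf.~\cite{En},\cite{IK},\cite{GS}) and to deduce (ii), (iii), (iv) from (i) together with one explicit computation. Recall the construction. Since $f\in\fg_{-\alpha}$ is a root vector, $\ad f$ is locally nilpotent on $\Ug$, so $\{f^k\}_{k\geq0}$ is an Ore set, $\cU(\fg)_f$ exists, and one may adjoin formal powers $f^z$ ($z\in\mathbb{C}$) with $f^zf^w=f^{z+w}$ and the twist relation $f^zuf^{-z}=\operatorname{Ad}(f^z)(u)$ (a finite sum, since $\ad f$ is locally nilpotent), obtaining an algebra $\widetilde{\cU}$. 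For $N\in\cM_a$ one puts $\widetilde N:=\widetilde{\cU}f^x\otimes_{\Ug}N$ with $x$ in the coset $a+\mathbb{Z}$, and defines $\cC_{\alpha}(N)$ to be the largest $\fg$-submodule of $\widetilde N$ on which $e$ acts locally nilpotently; on the $\fsl_2=\langle e,f,h\rangle$ level this sends the Verma module of highest weight $m$ to the one of highest weight $-m-2$. I would take this, together with the exactness of localization at $f$, as the starting point.

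For (ii), since the Ore localization at $f$ and the twisted extension $\widetilde{(-)}$ involve only $f$ and $\ad f$ (with $\ad f$ already locally nilpotent on $\cU(\fp)\subset\Ug$), a PBW/flatness check gives $\Res^{\fg}_{\fp}\widetilde N\cong\widetilde{\Res^{\fg}_{\fp}N}$; as ``largest $e$-locally-nilpotent submodule'' is cut out by the single element $e\in\fp$, this yields $\cC^{\fp}\circ\Res^{\fg}_{\fp}=\Res^{\fg}_{\fp}\circ\cC^{\fg}$. For (i), when $a\notin\mathbb{Z}$ every $\fsl_2$-primitive vector of $N\in\cM_a$ has weight in $a+\mathbb{Z}\subset\mathbb{C}\setminus\mathbb{Z}_{\geq0}$, so $\Res_{\fsl_2}N$ is a direct sum of simple Verma modules and $f$ acts injectively on $N$. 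I would then adapt the argument of \cite{IK},\cite{GS}: this non-integrality makes $\cC_{\alpha}$ exact on $\cM_a$ and makes the truncation to the largest $e$-locally-nilpotent submodule reversible — every $\fsl_2$-Verma in sight being simple — so that $\cC_{\alpha}:\cM_{-a}\to\cM_a$ is a quasi-inverse of $\cC_{\alpha}:\cM_a\to\cM_{-a}$ and $\cC_{\alpha}:\cM_a\iso\cM_{-a}$.

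For (iii), set $a:=(\lambda,\alpha^{\vee})\notin\mathbb{Z}$; the hypothesis $\alpha\in\Sigma$ or $\alpha/2\in\Sigma$ forces $f\in\fn^-$, so $ev_{\lambda}=0$ and $M(\lambda),L(\lambda)\in\cM_a$ (and $L_{\fg_0}(\lambda)\in\cM_a$ for the $\fg_0$-triple, since $\alpha\in\Pi_0$). The key step is the identification
$$\cC_{\alpha}\bigl(M(\lambda)\bigr)\cong M\bigl(r_{\alpha}(\lambda+\rho)-\rho\bigr).$$
For this I would write $\widetilde{M(\lambda)}=\widetilde{\cU}f^x\otimes_{\cU(\fb)}\mathbb{C}_{\lambda}$ and restrict to the minimal ``$\alpha$''-Levi $\fl_{\alpha}$ (equal to $\fsl_2+\fh$ if $\alpha\in\Sigma$ and to $\osp(1|2)+\fh$ if $\alpha/2\in\Sigma$); using (ii), this reduces the identification of the largest $e$-locally-nilpotent submodule to the rank-one case, which produces a highest weight vector of $\fh$-weight $\lambda-(\lambda+\rho_{\fl_{\alpha}},\alpha^{\vee})\alpha=r_{\alpha}(\lambda+\rho)-\rho$ (here one uses $(\rho_{\Sigma},\alpha^{\vee})=(\rho_{\fl_{\alpha}},\alpha^{\vee})$, which equals $1$ if $\alpha\in\Sigma$ and $\tfrac12$ if $\alpha/2\in\Sigma$); a PBW count then shows this submodule is freely generated over $\cU(\fn^-)$, i.e. is the asserted Verma module. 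Applying $\cC_{\alpha}$, exact by (i), to $M(\lambda)\twoheadrightarrow L(\lambda)$ yields a surjection $M(r_{\alpha}(\lambda+\rho)-\rho)\twoheadrightarrow\cC_{\alpha}(L(\lambda))$, so $\cC_{\alpha}(L(\lambda))$ is a nonzero highest weight module of highest weight $r_{\alpha}(\lambda+\rho)-\rho$; and it is simple, because by (i) the equivalence $\cC_{\alpha}$ carries the simple object $L(\lambda)$ of $\cM_a$ to a simple object of $\cM_{-a}$, which is automatically simple as a $\fg$-module since $\fg$-submodules of objects of $\cM_{-a}$ remain in $\cM_{-a}$. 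Hence $\cC_{\alpha}(L(\lambda))\cong L(r_{\alpha}(\lambda+\rho)-\rho)$, and the statement for $L_{\fg_0}(\lambda)$ follows by the same argument with $(\fg,\rho)$ replaced by $(\fg_0,\rho_0)$.

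Finally (iv) is formal: decomposing $N$ according to the $\mathbb{Z}$-cosets of the $h$-eigenvalues (each piece a $\fg$-submodule, since $\fg_{\beta}$ shifts $h$-eigenvalues by $(\beta,\alpha^{\vee})\in\mathbb{Z}$) and applying $\cC_{\alpha}$ piece by piece, the piece of $N$ containing $L(\lambda)$ lies in $\cM_a$ with $a=(\lambda,\alpha^{\vee})\notin\mathbb{Z}$, where $\cC_{\alpha}$ is an equivalence by (i); an equivalence of categories preserves subquotients, so $\cC_{\alpha}(L(\lambda))$ is a subquotient of $\cC_{\alpha}(N)$. I expect the main obstacle to be (i) — proving that the twisted localization and the truncation to the largest $e$-locally-nilpotent submodule are mutually inverse for $a\notin\mathbb{Z}$, so that $\cC_{\alpha}^2\cong\id$ — together with the rank-one input into (iii); the case $\alpha/2\in\Sigma$ there needs extra care, the top $\fsl_2^{\alpha}$-string of $M(\lambda)$ being the one generated by $f_{\alpha/2}v_{\lambda}$ rather than by $v_{\lambda}$.
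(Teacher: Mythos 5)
The paper does not prove this proposition at all: it states the properties and refers the reader to \cite{GS} for the proofs, so there is no in-paper argument to compare yours against. Your sketch reconstructs what is essentially the standard route (Deodhar--Enright twisted localization as in \cite{En}, \cite{IK}, \cite{GS}), and its logical architecture is sound: (ii) follows because localization at $f$, the twist by $f^x$, and the cut to the largest $e$-locally-nilpotent subspace are all defined by elements of $\fp$ (and the $e$-locally-nilpotent vectors form a $\fg$-submodule since $\ad e$ is locally nilpotent on $\fg$, so the $\fg$- and $\fp$-truncations agree); (i) rests on the semisimplicity of $\cM_a(\fsl_2)$ for $a\notin\mathbb{Z}$, which you correctly identify; (iii) reduces via (ii) to the rank-one computation and then uses that an equivalence of the abelian subcategories $\cM_{\pm a}$ is exact and preserves simples, together with the observation that simplicity in $\cM_{-a}$ is the same as $\fg$-simplicity because $\cM_{-a}$ is closed under submodules; (iv) is indeed formal once $N$ is split along $\mathbb{Z}$-cosets of $h$-eigenvalues. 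Two caveats. First, the two genuinely hard points --- that $\cC_\alpha^2\cong\id$ on non-integral blocks, and that the $e$-locally-nilpotent part of the twisted localization of $M(\lambda)$ is free over $\cU(\fn^-)$ on the vector of weight $r_\alpha(\lambda+\rho)-\rho$ --- are asserted rather than proved; you flag them yourself, and since the paper defers the whole statement to \cite{GS} this is acceptable, but be aware that this is where all the content sits. Second, a small slip: in (iii) the reason $ev_\lambda=0$ is that $e\in\fg_\alpha\subset\fn^+$ (not that $f\in\fn^-$), and in the $\alpha/2\in\Sigma$ case the top $\fsl_2^\alpha$-string of $M(\lambda)$ is the one through $v_\lambda$ itself (weight $(\lambda,\alpha^\vee)$), the string through $f_{\alpha/2}v_\lambda$ sitting one step lower; neither affects the argument.
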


\section{Bounded modules in the case when $\Delta=\Delta_{ni}$}
In this section $\fg$ is an indecomposable finite-dimensional Kac-Moody superalgebra
without isotropic roots, i.e. $\fg$ is either a simple Lie algebra
or $\mathfrak{osp}(1|2n)$. In this case all finite-dimensional modules
are completely reducible and
$L(\lambda)$ is finite-dimensional if and only if for each simple root
$\alpha$ one has
 $(\lambda,\alpha^{\vee})\in\mathbb{Z}_{\geq 0}$.

A finite-dimensional simple Lie algebra $\ft$ admits
infinite-dimensional  bounded modules $L(\lambda)$
 only for $\fg=\mathfrak{sl}_n,\mathfrak{sp}_{2n}$. This results is proven in by \cite{BBL} generalizing the analogous result in \cite{F} for cuspidal modules.

\subsection{Bounded modules for $\mathfrak{sp}_{2n},
\mathfrak{osp}(1|2n)$}\label{boundsp}

For $\fg=\mathfrak{sp}_{2},
\mathfrak{osp}(1|2)$  all modules
in $\CO$ are bounded, since $\dim L(\lambda)_{\mu}\leq 1$
for each $\lambda,\mu\in\fh^*$.

Consider the case $\fg=\mathfrak{sp}_{2n},\mathfrak{osp}(1|2n)$ with $n>1$.
The root system $\Delta$ is of type $C_n$ or $BC_n$ and it contains a unique
copy of the root system of type $D_n$. A module $L(\lambda)$ is
an infinite-dimensional  bounded module if and only if
$$
\Delta(\lambda)=D_n,\
(\lambda+\rho,\alpha^{\vee})>0\ \text{ for each }\alpha\in \Delta(\lambda)^+.$$

For $\mathfrak{sp}_{2n}$ this is proven in \cite{M}. For $\mathfrak{osp}(1|2n)$
this is proven in~\cite{FGG} and we give another proof in~\S\ref{thmosp12n} below.
Writing the set of simple roots for $\mathfrak{sp}_{2n}$ in the form
$\{\delta_1-\delta_2,\ldots,\delta_{n-1}-\delta_n,2\delta_n\}$
 we obtain that
the root subsystem $D_n$ has a set of simple roots
$\{\delta_1-\delta_2,\ldots,\delta_{n-1}-\delta_n,\delta_{n-1}+\delta_n\}$.
Let $\lambda\in\fh^*$, and let $\lambda+\rho=\sum_{i=1}^n y_i\delta_i$. Then
 $L(\lambda)$ is
an infinite-dimensional  bounded module if and only if
\begin{equation}\label{dede}
y_1-y_2,y_2-y_3,\ldots,y_{n-1}-y_n,y_{n-1}+y_n \in \mathbb{Z}_{>0}
\end{equation}
and, in addition, $y_n\in \mathbb{Z}+\frac{1}{2}$ for  $\mathfrak{sp}_{2n}$, while
$y_n\in \mathbb{Z}$ for  $\mathfrak{osp}(1|2n)$.
Note that  $L(\lambda)$ is finite-dimensional if and only if~(\ref{dede}) holds and, in addition, $y_n\in \mathbb{Z}_{>0}$ for  $\mathfrak{sp}_{2n}$,
$y_n\in \mathbb{Z}_{>0}+\frac{1}{2}$ for  $\mathfrak{osp}(1|2n)$.

\subsection{}\label{thmosp12n}
Here we give another proof of the above-mentioned result for $\mathfrak{osp}(1|2n)$.

\begin{thm}{thm12n}
Let $\fg=\mathfrak{osp}(1|2n), n>1$. A module $L(\lambda)$ is
an infinite-dimensional bounded  module if and only if
$\Delta(\lambda)=D_n$ and
\begin{equation}\label{lambdaD}
(\lambda+\rho,\alpha^{\vee})>0\ \text{ for each }\alpha\in \Delta(\lambda)^+.\end{equation}
\end{thm}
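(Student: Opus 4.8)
The strategy is to compare $\mathfrak{g} = \mathfrak{osp}(1|2n)$ with its even part $\mathfrak{g}_0 = \mathfrak{sp}_{2n}$, for which the analogous statement is already known (cited from \cite{M} in \S\ref{boundsp}), and to transport the result across the two algebras using the structural facts assembled in the Preliminaries. The key observation is that for $\mathfrak{osp}(1|2n)$ there are no isotropic roots and $\Delta_0 = \Delta(\mathfrak{sp}_{2n})$ is precisely the long-root system of type $C_n$ sitting inside $\Delta = BC_n$; in particular $\Delta(\lambda)$, $W(\lambda)$, $\Pi(\lambda)$, and the partial order on $\mathfrak{h}^*$ are literally the same objects for $\mathfrak{g}$ and for $\mathfrak{g}_0$. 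So the combinatorial condition ``$\Delta(\lambda) = D_n$ and \eqref{lambdaD} holds'' is the same condition that characterizes infinite-dimensional bounded $L_{\mathfrak{g}_0}(\lambda)$.

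First I would prove the easy direction. If $L(\lambda)$ is an infinite-dimensional bounded $\mathfrak{g}$-module, then by \Lem{lemPit}(i) the $\mathfrak{g}_0$-submodule of $L(\lambda)$ generated by a highest weight vector is $L_{\mathfrak{g}_0}(\lambda_0)$, and by \Lem{lemPit}(ii) this $\mathfrak{g}_0$-submodule is bounded. I must then rule out the possibility that $L_{\mathfrak{g}_0}(\lambda_0)$ is finite-dimensional while $L(\lambda)$ is infinite-dimensional: if every $\mathfrak{g}_{-\alpha}$, $\alpha \in \Pi_0$, acted locally nilpotently on $L(\lambda)$ it would be finite-dimensional (as recalled in \S\ref{rootsys}), and using the criterion in \S\ref{rootsys} together with the fact that $\mathfrak{g}$ and $\mathfrak{g}_0$ share the same $\Pi_0$ and the same $\Delta(\lambda)$, one sees that $\mathfrak{g}_{-\alpha}$ acts locally nilpotently on $L(\lambda)$ iff it does on $L_{\mathfrak{g}_0}(\lambda_0)$; hence $L(\lambda)$ infinite-dimensional forces $L_{\mathfrak{g}_0}(\lambda_0)$ infinite-dimensional. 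By Mathieu's theorem this gives $\Delta(\lambda) = D_n$ and \eqref{lambdaD}.

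For the converse — the harder direction — I would assume $\Delta(\lambda) = D_n$ and \eqref{lambdaD} and show $L(\lambda)$ is infinite-dimensional and bounded. Infinite-dimensionality is immediate since $L_{\mathfrak{g}_0}(\lambda_0) \subseteq L(\lambda)$ is infinite-dimensional by Mathieu. The boundedness is the crux. The natural approach is to reduce to the strongly typical or generic case via the Enright functors $\mathcal{C}_\alpha$ of \Prop{propen}: since $\Delta(\lambda) = D_n$, the root $2\delta_n$ (a long simple root, so in $\Sigma$) lies outside $\Delta(\lambda)$, and more generally one can use a sequence of $\mathcal{C}_\alpha$'s with $\alpha \notin \Delta(\cdot)$, under which \Prop{propen}(iii) tracks the highest weight by $r_\alpha(\lambda+\rho)-\rho$, to move $\lambda$ within its ``$W$-translate picture'' while preserving $\Delta(\cdot)$ and, by \Prop{propen}(i), the boundedness property (an equivalence of categories preserves finite-dimensionality of weight spaces up to uniform bounds — here I would invoke that $\mathcal{C}_\alpha$ on these categories is realized by a localization/twist that shifts weights and at worst multiplies multiplicities by a bounded factor). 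The cleanest route, and the one I expect to actually use, is: use $\mathcal{C}_\alpha$ with $\alpha\notin\Delta(\lambda)$ to replace $L(\lambda)$ by $L(\lambda')$ with $\lambda'$ in general position relative to the $2\delta_n$-direction, observe that $\ch L(\lambda')$ is then governed by the Weyl–Kac numerator over $\Delta(\lambda') = D_n$ (typicality makes the $\mathfrak{osp}(1|2n)$ denominator identity collapse to essentially the $\mathfrak{so}(2n)$ one on the relevant weight lattice), and read off a uniform bound on $\dim L(\lambda')_\mu$ directly — indeed $\dim L(\lambda')_\mu$ is bounded by the number of $w \in W(D_n)$, which is $2^{n-1} n!$, independent of $\mu$. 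Transporting back through the Enright equivalence gives boundedness of $L(\lambda)$.

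The main obstacle I anticipate is making the passage through $\mathcal{C}_\alpha$ rigorous with respect to \emph{uniform} boundedness of weight multiplicities, as opposed to mere finiteness: \Prop{propen}(i) gives an equivalence of categories but one must check that the localization presentation of $\mathcal{C}_\alpha$ does not blow up the degree — this is where the genuinely $\mathfrak{osp}(1|2n)$-specific input (the smallness of the Clifford/odd part, or equivalently the factorization of the character through the $\mathfrak{sp}_{2n}$ character times a finite-rank factor) has to be used. An alternative that sidesteps the Enright subtlety, which I would keep in reserve, is to prove boundedness directly: decompose $\Res^{\mathfrak{g}}_{\mathfrak{g}_0} L(\lambda)$ and bound each $\mathfrak{sp}_{2n}$-constituent using Mathieu's degree bound for $\mathfrak{sp}_{2n}$ together with the fact that, by the PBW/Clifford structure of $\mathcal{U}(\mathfrak{g})$ over $\mathcal{U}(\mathfrak{g}_0)$, only finitely many $\mathfrak{sp}_{2n}$-highest weights (a $W$-bounded set of them, shifted by sums of odd roots) can occur — each with bounded multiplicity — so the total weight multiplicity stays uniformly bounded.
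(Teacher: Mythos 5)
Your overall strategy --- compare with $\fg_0=\mathfrak{sp}_{2n}$ and quote Mathieu's criterion --- is the right starting point and is how the paper's proof begins, but both directions of your argument have a gap at the same place: the difference between $\rho$ and $\rho_0$. For $\mathfrak{osp}(1|2n)$ one has $\rho=\rho_0-\rho_1$ with $\rho_1=\tfrac12\sum_i\delta_i$, and $(\rho_1,(\delta_i+\delta_j)^{\vee})=1$. So condition (\ref{lambdaD}) is \emph{not} ``literally the same'' as Mathieu's condition $(\lambda+\rho_0,\alpha^{\vee})>0$ for $L_{\fg_0}(\lambda)$: boundedness of $L_{\fg_0}(\lambda)$ only yields $(\lambda+\rho,(\delta_{n-1}+\delta_n)^{\vee})\geq 0$, and the borderline case $=0$ --- where $L_{\fg_0}(\lambda)$ is bounded but, according to the theorem, $L(\lambda)$ is not --- is exactly what your ``only if'' argument fails to exclude. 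The paper closes this gap by observing that $\Delta(\lambda)=D_n$ forces $\fg_{-\delta_n}v\neq 0$ for a highest weight vector $v$, that this vector is $\fn_0^+$-primitive, and hence that $L_{\fg_0}(\lambda-\delta_n)$ is also a bounded subquotient of the restriction; applying Mathieu's criterion to \emph{that} module gives the strict inequality, since $(\lambda-\delta_n+\rho_0,\alpha^{\vee})=(\lambda+\rho,\alpha^{\vee})$ for $\alpha=\delta_{n-1}+\delta_n$.

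For the converse, your primary route (Enright functors to a generic weight, then a character/denominator argument bounding multiplicities by $|W(D_n)|$) is not the paper's and, as you yourself note, leaves the essential points open: the claimed collapse of the character and the uniform bound $2^{n-1}n!$ would require a genuine cancellation argument against the Verma character $R^{-1}$, and the preservation of uniform bounds under $\cC_{\alpha}$ is not addressed. Your reserve route is essentially the paper's proof, but as stated it begs the key question: ``bound each $\mathfrak{sp}_{2n}$-constituent using Mathieu's degree bound'' presupposes that every simple subquotient $L_{\fg_0}(\mu)$ of $\Res^{\fg}_{\fg_0}L(\lambda)$ satisfies Mathieu's strict dominance on $D_n^+$, and this is not automatic: for $\mu$ of the form $\lambda-\delta_{n-1}-\delta_n-\cdots$ the pairing $(\mu+\rho_0,(\delta_{n-1}+\delta_n)^{\vee})$ can a priori equal $0$, which would make $L_{\fg_0}(\mu)$ unbounded. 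The paper proves the needed inequality as follows: for $\alpha\in\Pi'=\{\delta_1-\delta_2,\dots,\delta_{n-1}-\delta_n\}$ it follows from local nilpotency of $\fg_{-\alpha}$, while for the single problematic root $\delta_{n-1}+\delta_n$ it applies the Enright functor $\cC_{2\delta_n}$ (legitimate since $2\delta_n\notin\Delta(\lambda)$) and uses \Prop{propen} to transfer local nilpotency of $\fg_{\delta_n-\delta_{n-1}}$ from $L(r_{\delta_n}(\lambda+\rho)-\rho)$ to the subquotient $L_{\fg_0}(r_{\delta_n}(\mu+\rho_0)-\rho_0)$, which gives $(\mu+\rho_0,\delta_{n-1}+\delta_n)>0$. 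This single-reflection use of $\cC_{\alpha}$ is the step missing from both of your routes.
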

\begin{proof}
One has $\fg_0=\mathfrak{sp}_{2n}$ and $\fg$ admits a unique base $\Sigma$ compatible with $\Pi_0$ (since $\Delta$ does not have isotropic roots).  One has
$$\Sigma=\Pi'\cup\{\delta_n\},\ \Pi_0=\Pi'\cup\{2\delta_n\},\ \Pi(D_n)=\Pi'\cup\{\delta_{n-1}+\delta_n\},$$
where
$\Pi':=\{\delta_1-\delta_2,\ldots,\delta_{n-1}-\delta_n\}$.
Let $v$ be a highest weight vector of $L(\lambda)$.

Assume that $L(\lambda)$ is bounded.
A $\fg_0$-module generated by $v$ is a quotient
of $M_{\mathfrak{sp}_{2n}}(\lambda)$, so
$L_{\mathfrak{sp}_{2n}}(\lambda)$ is a subquotient
of $\Res_{\fg_0}^{\fg} L(\lambda)$. Therefore
$L_{\mathfrak{sp}_{2n}}(\lambda)$ is bounded.

If $L_{\mathfrak{sp}_{2n}}(\lambda)$ is finite-dimensional, then
for each $\alpha\in\Pi_0$ the root space
$\fg_{-\alpha}$ acts nilpotently on $v$ and so
$L(\lambda)$ is finite-dimensional, see~\S \ref{rootsys}.

If $L_{\mathfrak{sp}_{2n}}(\lambda)$ is an infinite-dimensional bounded module,
then, by~\S \ref{boundsp},
$\Delta(\lambda)=D_n$ and thus $\fg_{-\delta_n}v\not=0$.
Since $\fn^+_0(\fg_{-\delta_n}v)=0$, the module
$\Res_{\fg_0}^{\fg} L(\lambda)$ has a primitive vector
of weight $\lambda-\delta_n$ and thus has a subquotient
isomorphic to $L_{\mathfrak{sp}_{2n}}(\lambda-\delta_n)$.
Hence $L_{\mathfrak{sp}_{2n}}(\lambda-\delta_n)$ is bounded. Since
$\Delta(\lambda-\delta_n)=\Delta(\lambda)=D_n$, the boundedness of  $L_{\mathfrak{sp}_{2n}}(\lambda-\delta_n)$   gives
$$(\lambda+\rho,\alpha^{\vee})=(\lambda-\delta_n+\rho_0,\alpha^{\vee})>0\
\text{ for each }\alpha\in \Pi(D_n),$$
see \S \ref{boundsp}. This establishes the ``only if'' part.

Now assume that $\Delta(\lambda)=D_n$ and that~(\ref{lambdaD}) holds.
Let us show that $L(\lambda)$ is bounded, i.e. that
$M:=\Res_{\fg_0}^{\fg} L(\lambda)$ is bounded.
Since $M\in \CO(\fg_0)$, it has
a finite length.

Therefore it is enough to show that any simple
subquotient of $M$ is a bounded module.
Let $L_{\fg_0}(\mu)$ be a
subquotient of $M$. One has $\Delta(\mu)=\Delta(\lambda)=D_n$.
By~\S \ref{boundsp} it suffices to show that
$(\mu+\rho_0,\alpha)>0$ for $\alpha\in \Pi(D_n)$.
Take $\alpha\in\Pi'$.
By~(\ref{lambdaD}) the root space $\fg_{-\alpha}$
acts nilpotently on $v$ and thus locally nilpotently on $L(\lambda)$
and on $L_{\fg_0}(\mu)$.
Therefore  $(\mu+\rho_0,\alpha)>0$.
It remains to verify
that $(\mu+\rho_0,\delta_{n-1}+\delta_n)>0$.
Note that $\Delta(\lambda)=\Delta(\mu)$ does not contain $2\delta_n$.
Using~\Prop{propen}  for $\alpha=2\delta_n$
we obtain that $\cC_{2\delta_n}(L_{\fg_0}(\mu))=L_{\fg_0}(r_{\delta_n}(\mu+\rho_0)-\rho_0)$
is a subquotient of
$\cC_{2\delta_n}(L(\lambda))=L(r_{\delta_n}(\lambda+\rho)-\rho)$. Since
$\delta_{n-1}-\delta_n\in\Sigma$ and
$$(r_{\delta_n}(\lambda+\rho),\delta_{n-1}-\delta_n)=
(\lambda+\rho,\delta_{n-1}+\delta_n)\in\mathbb{Z}_{>0}$$
the root space $\fg_{\delta_n-\delta_{n-1}}$ acts locally nilpotently
on $L(r_{\delta_n}(\lambda+\rho)-\rho)$ and thus on
$L_{\fg_0}(r_{\delta_n}(\mu+\rho_0)-\rho_0)$. Hence
$$0<(r_{\delta_n}(\mu+\rho_0), \delta_{n-1}-\delta_n)=
(\mu+\rho_0, \delta_{n-1}+\delta_n)$$
as required. This completes the proof.
\end{proof}
We remark that the reasoning used to prove the boundedness of $L_{\fg_0}(\mu)$ at the end of the last proof is similar to the one used for the classification of the simple highest weight bounded modules of $\mathfrak{sp}_{2n}$, see Lemma 9.2 in \cite{M}.

\subsection{Category $\CB(\fg)$}
Retain the notation of~\S \ref{cc}.
We denote by $\CB(\fg)$ the full subcategory of ${\CO}^{inf}(\fg)$
consisting of modules $N$ such that each simple subquotient of $N$ is bounded.

If $E$ is a cyclic submodule of $N\in\CB(\fg)$, then
 $E\in \CO$, so $E$ has a finite length and thus $E$ is bounded.
As a result,  $\CB(\fg)$ is the full subcategory of ${\CO}^{inf}(\fg)$
consisting of modules $N$ such that each  cyclic submodule of $N$ is bounded.
Using~\Lem{lemPit}(ii), we obtain the following.

\subsubsection{}
\begin{cor}{cor11}
Let $\fg'\subset\fg$ be Kac-Moody superalgebras with  compatible
triangular decompositions. If $N\in \CB(\fg)$, then
$\Res^{\fg}_{\fg'} N\in\CB(\fg')$.
\end{cor}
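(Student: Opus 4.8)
The plan is to exploit the reformulation of $\CB(\fg)$ proved just above --- that $\CB(\fg)$ consists exactly of those $N\in\CO^{inf}(\fg)$ all of whose cyclic $\fg$-submodules are bounded --- and then invoke \Lem{lemPit}(ii). First I would check that $\Res^{\fg}_{\fg'}N$ lies in $\CO^{inf}(\fg')$: since $\fh'=\fg'\cap\fh\subset\fh$ acts diagonally on $N$, condition (C1) holds, and since $\fn'_+\cap\fg_0\subset\fn^+_0$ acts locally nilpotently on $N$, condition (C2) holds. So the only content is to show that every cyclic $\fg'$-submodule of $\Res^{\fg}_{\fg'}N$ is bounded.

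So let $E'$ be a cyclic $\fg'$-submodule of $\Res^{\fg}_{\fg'}N$. As $N$ is a weight module, a generator of $E'$ is a finite sum of weight vectors, and the $\fg'$-submodule it generates is contained in the (finite) sum of the cyclic $\fg'$-submodules generated by these weight vectors; hence we may assume $E'$ is generated by a single weight vector $v'$. Now let $E$ be the $\fg$-submodule of $N$ generated by $v'$. Because $N\in\CB(\fg)$, the cyclic $\fg$-module $E$ is bounded. Finally, $E'\subseteq\Res^{\fg}_{\fg'}E$ is a cyclic $\fg'$-submodule of the bounded $\fg$-module $E$, and $\fg'\subset\fg$ are Kac-Moody superalgebras with compatible triangular decompositions, so \Lem{lemPit}(ii) yields that $E'$ is $\fg'$-bounded. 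Thus every cyclic $\fg'$-submodule of $\Res^{\fg}_{\fg'}N$ is bounded, which by the characterization above gives $\Res^{\fg}_{\fg'}N\in\CB(\fg')$.

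The only subtlety worth flagging --- what passes for the main obstacle here --- is that $N$ itself need not be bounded, so \Lem{lemPit}(ii) cannot be applied to $N$ directly; the device is to first pass to the $\fg$-cyclic submodule $E$ generated by $v'$, which is bounded precisely because of the cyclic-submodule reformulation of the definition of $\CB(\fg)$, and only afterwards restrict to $\fg'$.
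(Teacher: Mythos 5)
Your proof is correct and follows exactly the route the paper intends: it combines the reformulation of $\CB(\fg)$ via cyclic submodules with \Lem{lemPit}(ii), first passing from a cyclic $\fg'$-submodule to the bounded cyclic $\fg$-submodule generated by the same weight vector. The paper leaves these steps implicit ("Using \Lem{lemPit}(ii), we obtain the following"), and your write-up, including the reduction to a weight-vector generator, is a faithful and complete expansion of that argument.
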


The following result is a particular case of a more general result
about quasi-integrable modules in~\cite{GS}.

\subsubsection{}
\begin{prop}{thmcomred}
Let $\fg = \mathfrak{osp}(1|2n)$ or  $\fg = \mathfrak{sp}_{2n}$, $n>1$.

(i) The category $\CB(\fg)$ is semisimple.

(ii) If $\fg\subset \fg''$ are Kac-Moody superalgebras with  compatible
triangular decompositions, then for each $N\in \CB(\fg)$
the module $\Res^{\fg''}_{\fg}N$ is completely reducible.

\end{prop}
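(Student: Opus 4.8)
The plan is to reduce both statements to the vanishing of $\Ext^1_{\CO^{inf}(\fg)}(L(\lambda),L(\mu))$ for bounded simple modules $L(\lambda),L(\mu)$: part (i) will then follow by a standard socle/length argument, and part (ii) is immediate from (i) together with~\Cor{cor11}. Since $\fg$ has no isotropic roots it has a unique base $\Sigma$ compatible with $\Pi_0$, so the simple objects of $\CO^{inf}(\fg)$ are exactly the $L(\lambda)$, $\lambda\in\fh^*$; write $\lambda+\rho=\sum_{i=1}^n y_i\delta_i$. From~\S\ref{boundsp} I would record the following about a bounded $L(\lambda)$: the coordinates $y_1,\dots,y_n$ all lie in a single coset of $\ZZ$ in $\RR$, namely $\ZZ$ or $\ZZ+\tfrac12$, and this coset determines whether $\Delta(\lambda)=C_n$ or $\Delta(\lambda)=D_n$; moreover $L(\lambda)$ is finite-dimensional precisely when $\Delta(\lambda)=C_n$ and is an infinite-dimensional bounded module precisely when $\Delta(\lambda)=D_n$, in which case~(\ref{lambdaD}) holds, i.e. $\lambda+\rho$ is regular and dominant for $\Delta(\lambda)^+$.

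Next comes the $\Ext^1$-vanishing, which is the main point. Suppose $L(\lambda),L(\mu)$ are bounded with $\Ext^1(L(\lambda),L(\mu))\neq0$. By~(\ref{ext}) we have $\mu+\rho=w(\lambda+\rho)$ for some $w\in W(\lambda)\subseteq W$; as $W$ acts on the coordinates $(y_1,\dots,y_n)$ by signed permutations, $\mu+\rho$ has its coordinates in the same coset of $\ZZ$ as $\lambda+\rho$, and hence $\Delta(\mu)=\Delta(\lambda)$. If this common root system is $C_n$, then $L(\lambda)$ and $L(\mu)$ are finite-dimensional, so any extension of one by the other is finite-dimensional and therefore completely reducible (recall that all finite-dimensional $\fg$-modules are completely reducible), giving $\Ext^1=0$. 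If it is $D_n$, then by~(\ref{lambdaD}) both $\lambda+\rho$ and $\mu+\rho$ are regular and dominant for $D_n^+$ and lie in a single $W(D_n)$-orbit; such an element is unique in its orbit (see~\S\ref{maxorb}), so $\mu+\rho=\lambda+\rho$ and $\mu=\lambda$. Finally $\Ext^1(L(\lambda),L(\lambda))=0$: in an extension $0\to L(\lambda)\to E\to L(\lambda)\to 0$ the weight space $E_\lambda$ is two-dimensional, and any $v\in E_\lambda$ not in the submodule maps to a highest weight vector of the quotient, so $\fn^+v$ lies in the submodule and has weight $>\lambda$, hence $\fn^+v=0$; then $v$ generates a highest weight submodule meeting $L(\lambda)$ trivially, which splits the sequence. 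Thus $\Ext^1(L(\lambda),L(\mu))=0$ for all bounded simple $L(\lambda),L(\mu)$.

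To prove (i), let $N\in\CB(\fg)$. It is enough to show that every finitely generated submodule $E\subseteq N$ is semisimple, for then $N$ is the sum of its simple submodules, hence semisimple. Such an $E$ lies in $\CO(\fg)$, so it has finite length, and all its composition factors are bounded; using the $\Ext^1$-vanishing above together with additivity of $\Ext^1$ in the first variable, induction on the length of $E$ (splitting off a simple submodule of $\Soc E$ at each step) shows $E$ is semisimple. This establishes (i). For (ii), if $N\in\CB(\fg'')$ then $\Res^{\fg''}_{\fg}N\in\CB(\fg)$ by~\Cor{cor11}, hence is completely reducible by (i).

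I expect the main obstacle to be the determination of which weights can be linked to the highest weight of a bounded module — that is, the step yielding $\Delta(\mu)=\Delta(\lambda)$ and, in the $D_n$ case, $\mu=\lambda$. This relies essentially on the explicit classification in~\S\ref{boundsp}: that linkage cannot mix the two families $\Delta(\lambda)=C_n$ and $\Delta(\lambda)=D_n$, and that after the $\rho$-shift the highest weight of a bounded module is regular and dominant for its own integral root subsystem. Granting this, the reduction to $\Ext^1$-vanishing between simples, the descent to semisimplicity of $\CB(\fg)$, and the deduction of (ii) are all formal.
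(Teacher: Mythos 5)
Your proof is correct and follows essentially the same route as the paper: reduce semisimplicity to the vanishing of $\Ext^1$ between bounded simples (plus finite length of finitely generated submodules), establish that vanishing via~(\ref{ext}) together with the fact from~\S\ref{boundsp} and \Thm{thm12n} that $\lambda+\rho$ is the unique maximal element of its $W(\lambda)$-orbit, and deduce (ii) from (i) and~\Cor{cor11}. Your case split $\Delta(\lambda)=C_n$ versus $D_n$ is just an unpacking of that one-line dominance observation (the paper instead cites Lemma~1.3.1 of \cite{GK} for the formal reduction), so the two arguments coincide in substance.
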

\begin{proof}
Note that part (ii) follows from part (i) and~\Cor{cor11}.
One easily shows (see, for example, Lemma 1.3.1 of~\cite{GK}) that to prove (i)
 it is enough to verify that each
module in $\CB = \CB (\fg)$ has a simple submodule and that
$$\Ext^1_{\CB}(L(\mu),L(\mu'))=0$$
if $L(\mu), L(\mu')$ are bounded.
Take any $N\in\CB$ and let $M$ be a cyclic submodule of $N$.
Then $M$ lies in the category $\CO$ and thus admits a simple submodule.
Hence $N$ admits a simple submodule.
Since $\fh$ acts diagonally on the  highest weight spaces of he modules in $\CB$  one has
$$\Ext^1_{\CB}(L(\mu),L(\mu))=0.$$
Let $L(\mu), L(\mu')$ be bounded modules. Using the assumption on $\fg$ and~\S \ref{maxorb}, \ref{boundsp}, \Thm{thm12n}
and (\ref{ext}), we conclude that
 $\mu+\rho$ (resp., $\mu'+\rho$) is a unique
maximal element in $W(\mu)(\mu+\rho)$ (resp., $W(\mu')(\mu'+\rho)$).
Since $\mu\not=\mu'$, one has $\mu'\not\in W(\mu)(\mu+\rho)$
and so $\Ext^1( L(\mu),L(\mu'))=0$
by~(\ref{ext}).  This gives (i).
\end{proof}

We remark that in the cases $\fg = \mathfrak{sl}_n$  and $\fg = \mathfrak{osp}(1|2)$ the category $\mathcal B (\fg)$ is not semisimple. Indeed, take for example an extension of the trivial module $L(0)$ by $L(r_{\alpha}.0)$, $\alpha \in \Pi_0$ if $\fg = \mathfrak{sl}_n$,  and the Verma module $M(0)$ with highest weight $\lambda=0$ if $\fg = \mathfrak{osp}(1|2)$.

\section{Bounded modules}
In this section $\fg$ is an indecomposable finite-dimensional Kac-Moody superalgebra.

\subsection{Algebra $\fg_{ni}$}
Let $\widetilde{\Sigma}$ be the set of all bases (compatible with $\Pi_0$) of $\fg$.
Recall that all bases in $\widetilde{\Sigma}$ are connected by chains of odd reflections;
in particular, $\Delta_{ni}\cap \Delta^+$ does not depend on the choice of $\Sigma'\in
\widetilde{\Sigma}$. Set
$$\Pi_{ni}:=\cup_{\Sigma'\in\widetilde{\Sigma}} (\Sigma'\cap\Delta_{ni}).$$

Consider a Kac-Moody superalgebra $\fg_{ni}$ with the set of simple roots
$\Pi_{ni}$,  parity function $p:\Pi_{ni}\to \mathbb{Z}_2$
given by the restriction of $p:\Delta\to  \mathbb{Z}_2$ to $\Pi_{ni}$,
 and the Cartan matrix $a_{ij}:=(\alpha_i^{\vee},\alpha_j)$
for $\alpha_i,\alpha_j\in\Pi_{ni}$.

If $\Delta_{ni}\cap \Delta_1=\emptyset$, then $\fg_{ni}\cong [\fg_0,\fg_0]$
and we identify these algebras.
If $\Delta_{ni}\cap \Delta_1\not=\emptyset$, then
$\fg=\mathfrak{osp}(2s+1|2n)$
or $G(3)$. For $\fg=
\mathfrak{osp}(1|2n)$ one has $\fg_{ni}\cong \fg$  and we identify these algebras.
For $\fg=G(3), \mathfrak{osp}(2s+1|2n)$
with $s>0$, one has $\fg_0=\ft\times \mathfrak{sp}_{2n}$ and
$\fg_{ni}=\ft\times \mathfrak{osp}(1|2n)$, where
$\ft=G_2,\mathfrak{0}_{2s+1}$ respectively;
in these cases, $\fg_{ni}$ is not a subalgebra of $\fg$.

Using the above identifications, we have $(\fg_{ni})_0=[\fg_0,\fg_0]$ and fix $\fh\cap [\fg_0,\fg_0]$ to be the  Cartan subalgebra  of $\fg_{ni}$. The
 root system of $\fg_{ni}$ is $\Delta_{ni}$.

Observe also that, with the terminology of \S \ref{subsetSigma},  the connected components of $\fg_0$ are the even parts of the connected components of $\fg_{ni}$.

\subsubsection{Distinguished bases}
It is easy to check that each   connected component $\Pi'$ of $\Pi_{ni}$
lies in a certain base $\Sigma'\in\widetilde{\Sigma}$.
In this case the base $\Sigma'$ is  {\em  distinguished}, i.e. it
contains at most one isotropic root.
For instance, for $\mathfrak{osp}(7|4)$
one has
$\Pi_{ni}=\Pi'\coprod \Pi''$, where
$$\begin{array}{ll}
\Pi'=\{\vareps_1-\varesp_2,\vareps_2-\vareps_3,
\vareps_3\},\ & \
\Sigma'=\{\delta_1-\delta_2,\delta_2-\vareps_1,
\vareps_1-\varesp_2,\vareps_2-\vareps_3,
\vareps_3\},\\
\Pi''=\{\delta_1-\delta_2,\delta_2
\}, \ &\
\Sigma''=\{\vareps_1-\varesp_2,\vareps_2-\vareps_3,
\vareps_3-\delta_1,\delta_1-\delta_2,\delta_2
\}.\end{array}$$

\subsubsection{Remark}
For the case $A(m|n)$ there are two distinguished  bases, both
containing $\Pi_0$.  For the case $D(2,1,a)$
there are three  distinguished  bases,
each containing two connected components of $\Pi_0=\Pi_{\mathfrak{sl}_2} \times \Pi_{\mathfrak{sl}_2} \times\Pi_{\mathfrak{sl}_2}$.
For other cases the number of connected components is $1$ or $2$ and
the distinguished bases are in one-to-one correspondence with the connected components of $\Pi_{ni}$.

\subsubsection{Base $\Sigma_{\ft}$}
Let $\ft$ be a component of $\fg_{ni}$  and $\Pi(\ft)$
be the  corresponding connected component of $\Pi_{ni}$.
If $\fg\not=A(m|n)$ we denote by $\Sigma_{\ft}$
a  distinguished set of simple roots containing $\Pi(\ft)$
(it is unique for $\fg\not=D(2,1,a)$).
For $\fg=A(m|n)$ we choose one distinguished set of simple roots $\Sigma$
and set $\Sigma_{\ft}:=\Sigma$ for all components $\ft$ of $\fg_{ni}$.

Since $\Pi(\ft)\subset\Sigma_{\ft}$, $\ft$
is a subalgebra of $\fg$. For instance,  $\fg=\mathfrak{osp}(2s+1|2n)$
does not contain
$\fg_{ni}=\mathfrak{o}_{2s+1}\times \mathfrak{osp}(1|2n)$,
but contains  subalgebras isomorphic to
$\mathfrak{o}_{2s+1}$ and $\mathfrak{osp}(1|2n)$.

\subsubsection{Example}
Take  $\fg=\mathfrak{osp}(5|4)$. We have $\fg_{ni}=\mathfrak{o}_5
\times \mathfrak{osp}(1|4)$. Then
$$\Sigma_{\mathfrak{o}_5}=\{\delta_1-\delta_2,\delta_2-\vareps_1,
\vareps_1-\varesp_2,
\vareps_2\},\ \ \Sigma_{\mathfrak{osp}(1|4)}=\{\vareps_1-\varesp_2,\vareps_2-\delta_1,\delta_1-\delta_2,\delta_2
\}.$$

Recall the notation $\lambda_{\ft}$ from ~\S \ref{subsetSigma}. For
 $\lambda= x_1\vareps_1+x_2\vareps_2+
y_1\delta_1+y_2\delta_2$
we have $\lambda_{\mathfrak{o}_5}=x_1\vareps_1+x_2\vareps_2$
and $\lambda_{\mathfrak{osp}(1|4)}=y_1\delta_1+y_2\delta_2$.

\subsection{}
\begin{prop}{propni}
Let $\ft$ be a component of $\fg_{ni}$ such that $\ft_0\not=A_n$.
Let $\Sigma$ contains $\Pi(\ft)$.
If $L_{\ft}(\lambda_{\ft})$ is bounded, then
 $\Res^{\fg}_{\ft} L(\lambda)\in \CB(\ft)$.
\end{prop}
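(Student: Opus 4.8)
The plan is to reduce the boundedness of $\Res^{\fg}_{\ft} L(\lambda)$ to the already-known semisimplicity/boundedness results for $\mathfrak{osp}(1|2n)$ and $\mathfrak{sp}_{2n}$ (\Prop{thmcomred}, \Thm{thm12n}), exploiting the structure of the components $\ft$ of $\fg_{ni}$. First I would recall that since $\ft_0 \neq A_n$ and $\ft$ is a component of $\fg_{ni}$, $\ft$ is either a simple Lie algebra of type $B$, $C$, $D$, $F_4$, $G_2$, or $\ft = \mathfrak{osp}(1|2n)$. If $\ft$ is a simple Lie algebra not of type $C_n$, then by \cite{BBL} the only bounded $L_{\ft}(\lambda_{\ft})$ are finite-dimensional; similarly for $\mathfrak{sp}_2$ and $\mathfrak{osp}(1|2)$ every module in $\CO$ is bounded. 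So the content is concentrated in the cases $\ft = \mathfrak{sp}_{2n}$ or $\ft = \mathfrak{osp}(1|2n)$ with $n > 1$ (after disposing of the ``finite-dimensional'' subcase, where the argument of \S\ref{rootsys} shows $L(\lambda)$ itself is finite-dimensional and there is nothing to prove).

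In the main case, the key observation is that $M := \Res^{\fg}_{\ft} L(\lambda)$ lies in $\CO^{inf}(\ft)$, since $\Pi(\ft) \subset \Sigma$ forces $\ft \cap \fn^+ \subset \fn^+$; hence $\ft \cap \fn_0^+$ acts locally nilpotently and $\fh \cap \ft$ acts diagonally. By \Lem{lemPit}(i), the $\ft$-submodule of $L(\lambda)$ generated by the highest weight vector is $L_{\ft}(\lambda_{\ft})$, which is bounded by hypothesis. The strategy is then to show that every simple subquotient of $M$ is bounded, i.e. $M \in \CB(\ft)$. For this I would argue as in the proof of \Thm{thm12n}: any cyclic $\ft$-submodule of $L(\lambda)$ lies in $\CO(\ft)$ (as $L(\lambda) \in \CO^{inf}$ and cyclic modules in $\CO^{inf}$ over a reductive-enough algebra have finite length — more precisely, $M$ restricted to $\ft_0$ lies in $\CO(\ft_0)$, has finite length, and we pass to $\ft$), so each simple subquotient $L_{\ft}(\mu_{\ft})$ of $M$ has $\Delta(\mu_{\ft}) = \Delta(\lambda_{\ft})$. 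By \Thm{thm12n} (for $\mathfrak{osp}(1|2n)$) or \S\ref{boundsp} (for $\mathfrak{sp}_{2n}$), it suffices to show $\Delta(\lambda_{\ft}) = D_n$ and $(\mu_{\ft} + \rho_{\ft}, \alpha^{\vee}) > 0$ for all $\alpha \in \Pi(D_n)$.

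The first condition, $\Delta(\lambda_{\ft}) = D_n$, follows from the boundedness of $L_{\ft}(\lambda_{\ft})$ together with its being infinite-dimensional. For the positivity, I would split $\Pi(D_n)$ into the roots $\alpha$ that lie in $\Sigma$ (or whose halves lie in $\Sigma$) — here boundedness of $L_{\ft}(\lambda_{\ft})$ forces $(\lambda + \rho, \alpha^{\vee}) > 0$, so $\fg_{-\alpha}$ acts locally nilpotently on $L(\lambda)$, hence on every subquotient $L_{\ft}(\mu_{\ft})$ of $M$, giving $(\mu_{\ft} + \rho_{\ft}, \alpha^{\vee}) > 0$ — and the one remaining root (the ``extra'' root $\delta_{n-1} + \delta_n$ of $D_n$ not in the distinguished base), for which I would use the Enright functor $\cC_{2\delta_n}$ exactly as in \Thm{thm12n}: since $2\delta_n \notin \Delta(\lambda)$, \Prop{propen}(iv) shows $\cC_{2\delta_n}(L_{\ft}(\mu_{\ft}))$ is a subquotient of $\cC_{2\delta_n}(L(\lambda))$, and reading off local nilpotency of the appropriate root space there transports the needed inequality back down. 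The main obstacle is the last step: verifying that the Enright-functor argument of \Thm{thm12n} — which was written for the specific simple root labeling of $\mathfrak{osp}(1|2n)$ inside itself — goes through verbatim when $\ft = \mathfrak{osp}(1|2n)$ (or $\mathfrak{sp}_{2n}$) sits inside the larger $\fg$ via the distinguished base $\Sigma_{\ft} \supset \Pi(\ft)$; one must check that $\delta_{n-1} - \delta_n \in \Sigma$ and that $2\delta_n \notin \Delta(\lambda)$ (equivalently $2\delta_n \notin \Delta_{ni}$-data relevant to $\lambda$), which is where the precise combinatorics of $\Pi_{ni}$ and the distinguished bases is used. Once these compatibilities are in place, the argument of \Thm{thm12n} applies mutatis mutandis and yields $M \in \CB(\ft)$.
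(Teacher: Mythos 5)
Your treatment of the main case ($\ft=\mathfrak{sp}_{2n}$ or $\ft=\mathfrak{osp}(1|2n)$ with $n>1$ and $L_{\ft}(\lambda_{\ft})$ infinite-dimensional) is essentially the paper's proof: show $\Delta(L_{\ft}(\mu))=D_n$ for every simple subquotient $L_{\ft}(\mu)$ of $\Res^{\fg}_{\ft}L(\lambda)$ by comparing supports, get positivity of $(\mu+\rho_{\ft},\alpha^{\vee})$ for $\alpha\in\Pi'$ from local nilpotency of $\fg_{-\alpha}$ on $L(\lambda)$, and get it for $\delta_{n-1}+\delta_n$ via the Enright functor $\cC_{2\delta_n}$ and \Prop{propen}. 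The compatibilities you single out as ``the main obstacle'' are exactly the ones the paper verifies: $\delta_{n-1}-\delta_n\in\Pi(\ft)\subset\Sigma$ by hypothesis, and $2\delta_n\notin\Delta(\lambda)$ because $(\lambda,(2\delta_n)^{\vee})=(\lambda_{\ft},(2\delta_n)^{\vee})\notin\mathbb{Z}$ when $\Delta(L_{\ft}(\lambda_{\ft}))=D_n$. So that part goes through.

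The genuine error is in how you dispose of the finite-dimensional subcase. You assert that if $L_{\ft}(\lambda_{\ft})$ is finite-dimensional then ``the argument of \S\ref{rootsys} shows $L(\lambda)$ itself is finite-dimensional and there is nothing to prove.'' This is false: the criterion in \S\ref{rootsys} needs $\fg_{-\alpha}$ to act locally nilpotently for \emph{every} $\alpha\in\Pi_0$, whereas finite-dimensionality of $L_{\ft}(\lambda_{\ft})$ only controls the simple roots in $\Pi(\ft)$, i.e. one component of $\Pi_{ni}$. For example, in $\fg=\mathfrak{osp}(7|4)$ with $\ft=\mathfrak{o}_7$ one can take $\lambda_{\ft}$ dominant integral while the $\delta$-part of $\lambda$ is generic; then $L(\lambda)$ is infinite-dimensional, the hypotheses of the proposition hold, and one still must prove $\Res^{\fg}_{\ft}L(\lambda)\in\CB(\ft)$ --- it is not vacuous. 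The correct (and short) argument, which is what the paper gives, is: by \Lem{lemPit}(i) the vector $v$ generates a copy of the finite-dimensional module $L_{\ft}(\lambda_{\ft})$, so each $\fg_{-\alpha}=\ft_{-\alpha}$ with $\alpha\in\Pi(\ft)$ acts nilpotently on $v$, hence locally nilpotently on all of $L(\lambda)$; therefore $\Res^{\fg}_{\ft}L(\lambda)$ is an integrable $\ft$-module and decomposes as a direct sum of finite-dimensional simple $\ft$-modules, which places it in $\CB(\ft)$. This is also the argument that handles the components of type $B$, $D$, $F_4$, $G_2$, where by \cite{BBL} boundedness of $L_{\ft}(\lambda_{\ft})$ already forces finite-dimensionality.
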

\begin{proof}
Set $M:=\Res^{\fg}_{\ft} L(\lambda)$.
Note that $M\in\CO^{inf}(\ft)$.

Let $v$ be a highest weight vector of $L(\lambda)$.
By~\Lem{lemPit}(i), the $\ft$-submodule of
$L(\lambda)$ generated by $v$ is isomorphic to
$L_{\ft}(\lambda_{\ft})$.

If $L_{\ft}(\lambda_{\ft})$
is finite-dimensional, then for each $\alpha\in \Pi(\ft)$
the root spaces $\fg_{-\alpha}=\ft_{-\alpha}$ acts nilpotently on $v$
and thus acts locally nilpotently on $M$. Then $M$ is a direct
sum of finite-dimensional simple $\ft$-modules, and thus $M\in\CB(\ft)$.

Now assume that $L_{\ft}(\lambda_{\ft})$
is infinite-dimensional.
Since this modules is bounded
and $\ft_0\not=A_n$, the algebra $\ft$ is
$\mathfrak{sp}_{2n}$ or $\mathfrak{osp}(1|2n)$
with $n>1$. Moreover, $\Delta(L_{\ft}(\lambda_{\ft}))=D_n$ and
$(\lambda_{\ft}+\rho(\ft),\alpha^{\vee})>0$ for each $\alpha\in \Pi(D_n)$, where
$$\Pi_0=\Pi'\cup\{2\delta_n\},\ \Pi(D_n)=\Pi'\cup\{\delta_{n-1}+\delta_n\},
\ \ \Pi':=\{\delta_1-\delta_2,\ldots,\delta_{n-1}-\delta_n\}.$$
Since $\ft$ is a component of $\fg_{ni}$ and $\Pi(\ft)\subset\Sigma$ one has
$$\rho_{\ft}=\rho(\ft),\ \ (\lambda+\rho,\alpha)=((\lambda+\rho)_{\ft},\alpha)
=(\lambda_{\ft}+\rho_{\ft},\alpha)
\ \text{ for }\alpha\in\Delta(\ft).$$
Therefore
\begin{equation}\label{snova}
(\lambda+\rho,\alpha^{\vee})>0\ \text{ for each }\alpha\in \Pi(D_n).\end{equation}

Let $L_{\ft}(\mu)$ ($\mu\in (\fh\cap\ft)^*$) be a simple
subquotient of $M$. Let us show that $L_{\ft}(\mu)$ is bounded. One has
$$\mu\in\supp (M)=\{\nu_{\ft}|\ \nu\in\supp(L(\lambda))\},$$
so for $\alpha\in\Delta(\ft)$ one has
$$(\mu,\alpha^{\vee})\subset (\lambda,\alpha^{\vee})+\mathbb{Z}=(\lambda_{\ft},
\alpha^{\vee})+\mathbb{Z}.$$
Therefore $\Delta(L_{\ft}(\lambda_{\ft}))=\Delta(L_{\ft}(\mu))=D_n$.
By~\S \ref{boundsp} it sufficies to show that
$(\mu+\rho(\ft),\alpha)>0$ for $\alpha\in \Pi(D_n)$.
Take $\alpha\in\Pi'$.
By~(\ref{snova}) the root space $\fg_{-\alpha}$
acts nilpotently on $v$ and thus locally nilpotently on $L(\lambda)$
and on $L_{\ft}(\mu)$. Therefore  $(\mu+\rho(\ft),\alpha)>0$.
By above, $\Delta(L_{\ft}(\mu)), \Delta(L(\lambda))$ do not contain $2\delta_n$.
Using~\Prop{propen} for $\alpha=2\delta_n$
we obtain that $\cC_{2\delta_n}(L_{\ft}(\mu))=L_{\ft}(r_{\delta_n}(\mu+\rho(\ft))-\rho(\ft))$
is a subquotient of
$\cC_{2\delta_n}(L(\lambda))=L(r_{\delta_n}(\lambda+\rho)-\rho)$. Since
$\delta_{n-1}-\delta_n\in \Pi(\ft)\subset\Sigma$ and
$$(r_{\delta_n}(\lambda+\rho),\delta_{n-1}-\delta_n)=
(\lambda+\rho,\delta_{n-1}+\delta_n)\in\mathbb{Z}_{>0}$$
the root space $\fg_{\delta_n-\delta_{n-1}}$ acts locally nilpotently
on $L(r_{\delta_n}(\lambda+\rho)-\rho)$ and thus on
$L_{\ft}(r_{\delta_n}(\mu+\rho(\ft)-\rho(\ft))$. Therefore
$$0<(r_{\delta_n}(\mu+\rho(\ft)), \delta_{n-1}-\delta_n)=
(\mu+\rho(\ft), \delta_{n-1}+\delta_n)$$
as required. Hence  $L_{\ft}(\mu)$ is bounded.
We conclude that $M\in\CB(\ft)$ as required.
\end{proof}

 \subsection{}
Retain the notation of~\S \ref{subsetSigma}. For a simple $\fg$-module $L$ in $\CO$ and a component $\ft$ of $\fg_{ni}$, by  $\lambda^{\ft}\in\fh^*$ we denote the  highest weight of $L$
with respect to $\Sigma_{\ft}$, i.e.
$L=L(\Sigma_{\ft},\lambda^{\ft})$.

\begin{thm}{thmbound}
  Let $\fg$ be a finite-dimensional Kac-Moody superalgebra
  and let $L\in\CO$ be a simple
$\fg$-module.  The module $L$ is bounded if and only if
the module $L_{\ft}((\lambda^{\ft})_{\ft})$
is bounded for each component $\ft$
of $\fg_{ni}$.
\end{thm}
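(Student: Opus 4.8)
The plan is to prove the two implications separately, reducing both to the $\mathfrak{osp}(1|2n)$/$\mathfrak{sp}_{2n}$ machinery already developed in Sections 3 and 4. For the ``only if'' direction, suppose $L$ is bounded. Fix a component $\ft$ of $\fg_{ni}$. There are two cases: either $\ft_0 = A_n$, or not. If $\ft_0 = A_n$, then $\Pi(\ft)$ is a connected component of $\Pi_{ni}$ consisting of even roots, so $\ft \subset [\fg_0,\fg_0] \subset \fg$ is a genuine subalgebra of $\fg$ with compatible triangular decomposition; by~\Lem{lemPit}(i) the $\ft$-submodule of $L$ generated by a highest weight vector is isomorphic to $L_{\ft}((\lambda^{\ft})_{\ft})$, and by~\Lem{lemPit}(ii) any cyclic $\ft$-submodule of the bounded $\fg$-module $L$ is $\ft$-bounded, so $L_{\ft}((\lambda^{\ft})_{\ft})$ is bounded. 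If $\ft_0 \neq A_n$, the same argument applies (using that $\Pi(\ft) \subset \Sigma_{\ft}$ so $\ft$ really is a subalgebra of $\fg$, as noted after the Example of~\S\ref{subsetSigma}): again~\Lem{lemPit} gives that $L_{\ft}((\lambda^{\ft})_{\ft})$ is a cyclic $\ft$-submodule of a bounded module, hence bounded.

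**The converse.**
For the ``if'' direction, suppose $L_{\ft}((\lambda^{\ft})_{\ft})$ is bounded for every component $\ft$ of $\fg_{ni}$. We must show $M := \Res^{\fg}_{\fg_0} L$ is a bounded $\fg_0$-module; since $L$ is a weight module this is equivalent to $L$ being bounded. Because $M \in \CO(\fg_0)$ it has finite length, so it suffices to show every simple subquotient $L_{\fg_0}(\mu)$ of $M$ is bounded. Since $\fg_0 = \prod_i \ft^{(i)}_0$ where $\ft^{(i)}$ are the components of $\fg_{ni}$ (the observation at the end of~\S\ref{subsetSigma}), and $L_{\fg_0}(\mu)$ factors as an outer tensor product over these factors, it is enough to check boundedness of each tensor factor as an $\ft^{(i)}_0$-module. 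For a component $\ft$ with $\ft_0 = A_n$ this is automatic: $\Res^{\fg}_{\ft} L$ already lies in $\CO(\ft) = \CO(A_n)$ and its highest weight restriction is $L_{\ft}((\lambda^{\ft})_{\ft})$, which is bounded; but a bounded highest weight $A_n$-module is finite-dimensional (no infinite-dimensional bounded $L(\lambda)$ for $\mathfrak{sl}_n$, \S\ref{boundsp}), so every $\ft_0$-subquotient is finite-dimensional. For a component $\ft$ with $\ft_0 \neq A_n$, \Prop{propni} applies directly with $\Sigma = \Sigma_{\ft}$: since $L_{\ft}((\lambda^{\ft})_{\ft})$ is bounded, $\Res^{\fg}_{\ft} L \in \CB(\ft)$, and in particular every simple $\ft$-subquotient — hence (restricting to the even part, which embeds the relevant $\ft_0$-subquotients) every $\ft_0$-subquotient appearing in $M$ — is bounded.

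**Assembling.**
The remaining point is bookkeeping: one must check that the simple $\fg_0$-subquotients $L_{\fg_0}(\mu)$ of $M = \Res^{\fg}_{\fg_0}L$ are, component by component, controlled by the $\ft$-subquotients of $\Res^{\fg}_{\ft} L$. This is because $\Res^{\fg}_{\fg_0} L = \Res^{\ft}_{\ft_0}\bigl(\Res^{\fg}_{\ft} L\bigr)$ when $\ft_0 = A_n$, while for $\ft_0 \neq A_n$ the point is that $\ft_0$ is the even part of $\ft$, so every simple $\ft_0$-module occurring in $\Res^{\fg_0}_{\ft_0} M$ occurs inside the restriction to $\ft_0$ of some simple $\ft$-subquotient of $\Res^{\fg}_{\ft} L$, and boundedness of a $\ft$-module implies boundedness of its restriction to $\ft_0$ by~\Cor{cor11}. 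Since the finitely many tensor factors are each bounded, $M$ is bounded, hence $L$ is bounded.

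**Main obstacle.**
The genuine content is entirely borrowed from \Prop{propni} and \Thm{thm12n} (via \S\ref{boundsp}); the statement itself is then mostly an organizational matter of tracking highest weights through the various restrictions and identifications between $\fg_{ni}$, its components $\ft$, and $\fg_0$. The one place demanding care is the case $\fg = \mathfrak{osp}(2s+1|2n)$ or $G(3)$, where $\fg_{ni}$ is \emph{not} a subalgebra of $\fg$ although each component $\ft$ \emph{is}; one must consistently use the subalgebra $\ft \hookrightarrow \fg$ attached to $\Sigma_{\ft}$ and the identification of $\rho_{\ft}$ with $\rho(\ft)$ on $\Delta(\ft)$, exactly as in the proof of \Prop{propni}, to make the weight matching $\bigl((\lambda^{\ft})_{\ft}$ versus the $\ft$-highest weight of $\Res^{\fg}_{\ft}L\bigr)$ unambiguous. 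Also one should note that for the reducible cases (two components) the ``if'' direction genuinely uses that $\fg_0$-modules in $\CO(\fg_0)$ have finite length so that checking simple subquotients suffices.
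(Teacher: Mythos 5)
Your ``only if'' direction is fine and coincides with the paper's: take a primitive vector with respect to $\Sigma_{\ft}$ and apply \Lem{lemPit}. The ``if'' direction, however, has a genuine gap, concentrated in how you treat components with $\ft_0$ of type $A$. First, the claim that ``a bounded highest weight $A_n$-module is finite-dimensional'' is false and directly contradicts \S\ref{boundsp}, which says that $\mathfrak{sl}_n$ (together with $\mathfrak{sp}_{2n}$) is precisely one of the two series of simple Lie algebras that \emph{do} admit infinite-dimensional bounded highest weight modules. Second, and more structurally: your strategy needs \emph{every} simple $\ft_0$-subquotient of $\Res^{\fg}_{\ft_0}L$ to be bounded, for every component $\ft$, but the only tool for deducing this from boundedness of the single top subquotient is \Prop{propni}, which explicitly assumes $\ft_0\neq A_n$ (its proof uses the rigid $D_n$-description of bounded $\mathfrak{sp}_{2n}$-weights and Enright functors, neither of which has an $\mathfrak{sl}_k$ analogue). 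So for $\mathfrak{gl}(m|n)$ with $m,n\geq 3$, or for $\mathfrak{osp}(6|2n)$ where $\mathfrak{o}_6\cong\fsl_4$, you have no control over the $\fsl_k$-factors of the lower $\fg_0$-subquotients of $\Res^{\fg}_{\fg_0}L$; the hypothesis only controls the highest one.

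The paper sidesteps exactly this difficulty by not analyzing all of $\Res^{\fg}_{\fg_0}L$: by \Lem{lemrho1}(i) it suffices to exhibit a single bounded $\fg_0$-submodule, and it takes $N=\cU(\fg_0)v_{\lambda}$ for $v_{\lambda}$ the highest weight vector with respect to a well-chosen distinguished base $\Sigma_{\ft}$. Then $N$ is a quotient of $E\otimes E'$, where $E=\cU(\ft)v_{\lambda}$ is \emph{simple} by \Lem{lemPit}(i) and hence bounded directly by hypothesis with no case distinction on the type of $\ft$; only the other factor $E'$ (of rank one, or of type $\mathfrak{sp}_{2n}$ where \Prop{propni} and \Prop{thmcomred} apply, or lying in the same distinguished base in type $A$) needs further work. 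To salvage your route you would have to prove an analogue of \Prop{propni} for $A_n$-components, which is not available in the paper.
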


\subsubsection{Example}
We apply the theorem to the case we are mostly interested in: $\fg=\mathfrak{osp}(m|2n)$.
Take $\lambda\in\fh^*$ and write
$\lambda+\rho=\displaystyle\sum_{i=1}^s x_i\vareps_i+\displaystyle\sum_{i=1}^n y_i\delta_i$, where $s = \left \lfloor \frac{m}{2} \right \rfloor$.
 Assume that
 $x_i+y_j\not=0$ for $i,j=1,2$. Then
for any base $\Sigma'$ we have $L(\Sigma,\lambda)=L(\Sigma',\lambda')$,
where $\lambda+\rho=\lambda'+\rho'$, $\rho' = \rho_{\Sigma'}$. The Theorem above states that
$L(\lambda)$ is bounded if and only if
$L_{\ft}((\lambda^{\ft})_{\ft})$ is bounded for
each component $\ft$ of $\fg_{ni}$. We have
$\Pi(\ft)\subset \Sigma_{\ft}$, so
$$\rho_{\Sigma_{\ft}}=\rho_{\ft}.$$
Hence, for $\lambda$ as above, $L(\lambda)$ is bounded if and only if
$L_{\ft}((\lambda+\rho)_{\ft}-\rho_{\ft})$ is bounded for each $\ft$.
One has $\fg_{ni}=\mathfrak{o}_m\times \ft$,
where
$\ft=\mathfrak{sp}_{2n}$ if $m$ is even and $\ft=\mathfrak{osp}(1|2n)$
if $n$ is odd.

One has $L_{\ft}((\lambda+\rho)_{\ft}-\rho_{\ft})=
L_{\ft}(\displaystyle\sum_{i=1}^n y_i\delta_i-\rho_{\ft})$.
For $n=1$ this module is bounded. For $n>1$ the conditions on
$y_i$ are given in~\S \ref{boundsp}.

Consider the module
$L_{\mathfrak{o}_m}(\displaystyle\sum_{i=1}^s x_i\vareps_i -\rho_{\mathfrak{o}_m})$.
For $m=1,2,3,4$ this module is always bounded.
For $m>6$ this module is bounded only if it is finite-dimensional, i.e. if
$x_1-x_2,\ldots,x_{s-1}-x_s,2x_s\in\mathbb{Z}_{>0}$.
For $m=6$ we have $\mathfrak{o}_6\cong \fsl_4$ and the boundedness is reduced to the boundedness of a module over $\fsl_4$.
For $m=5$ one has $\mathfrak{o}_5\cong \mathfrak{sp}_4$ and
this module is bounded
 if and only if either $x_1-x_2,2x_2\in \mathbb{Z}_{>0}$
or $2x_1,2x_2\in  \mathbb{Z}_{>0}$, $x-1-x_2\not\in\mathbb{Z}$.

\subsection{Proof of~\Thm{thmbound}}
We start from the following useful lemma.

\subsubsection{}
\begin{lem}{lemrho1}
  (i) A simple $\fg$-module
  $L$ is bounded if and only if it has a bounded $\fg_0$-submodule.

 (ii)  If $L_{\fg_0}(\lambda-2\rho_1)$
is bounded, then $L(\lambda)$ is bounded.
\end{lem}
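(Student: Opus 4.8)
For (i) the ``only if'' direction is immediate: each weight space of a $\fg_0$-submodule embeds into the corresponding weight space of $L$. For the converse I would argue via PBW: let $N\subseteq L$ be a nonzero bounded $\fg_0$-submodule; since $L$ is simple, $L=\cU(\fg)N$, and by the PBW theorem $\cU(\fg)=\Lambda(\fg_1)\,\cU(\fg_0)$, where $\Lambda(\fg_1)\subseteq\cU(\fg)$ denotes the span of the ordered monomials in a fixed weight basis of $\fg_1$. As $\cU(\fg_0)N=N$ this gives $L=\Lambda(\fg_1)\,N$. Writing $\Lambda(\fg_1)=\bigoplus_{i=1}^{r}\mathbb{C}\,\omega_i$ with $\omega_i$ of weight $\mu_i$ and $r=2^{\dim\fg_1}$, one gets $L_\nu=\sum_i\omega_i N_{\nu-\mu_i}$, whence $\dim L_\nu\le r\cdot\sup_\mu\dim N_\mu$ uniformly in $\nu$, so $L$ is bounded. (The same computation applied to $N=\cU(\fg_0)v$, using that the weights of $\Lambda(\fn_1^-)v$ lie between $\lambda-2\rho_1$ and $\lambda$, reproves the easy implication ``$L_{\fg_0}(\lambda)$ bounded $\Rightarrow L(\lambda)$ bounded''.)

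For (ii) I would reduce, by (i), to showing that every $\fg_0$-composition factor of $M:=\Res^{\fg}_{\fg_0}L(\lambda)$ is bounded: indeed $M\in\CO(\fg_0)$ has finite length, and a finite length weight module is bounded if and only if all its composition factors are. The first substantial step is to control which factors can occur. If $w\in L(\lambda)$ is a nonzero $\fn_0^+$-primitive vector of weight $\mu$ then $\cU(\fg)w=L(\lambda)$, and since $\cU(\fn_0^+)w=\mathbb{C}w$ we obtain $L(\lambda)=\cU(\fn^-)\,\Lambda(\fn_1^+)\,w$; comparing top weights, and using that the weights of $\Lambda(\fn_1^+)$ are the sums of subsets of $\Delta_1\cap\Delta^+$, with $\sum_{\beta\in\Delta_1\cap\Delta^+}\beta=2\rho_1$, forces $\lambda\le\mu+2\rho_1$, i.e. $\mu\ge\lambda-2\rho_1$, and in fact $\lambda-\mu$ is a sum of distinct positive odd roots. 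Together with the self-duality $L(\lambda)^{\vee}\cong L(\lambda)$ (the contravariant form), which restricts to $M\cong M^{\vee}$ as $\fg_0$-modules, this shows that every composition factor $L_{\fg_0}(\mu)$ of $M$ has $\lambda-2\rho_1\le\mu\le\lambda$; moreover $\Delta(\mu)=\Delta(\lambda)$ since $\mu-\lambda\in\mathbb{Z}\Delta$ and $(\gamma,\alpha^{\vee})\in\mathbb{Z}$ for all $\gamma\in\Delta$, $\alpha\in\Delta_0$. So it remains to prove: if $L_{\fg_0}(\lambda-2\rho_1)$ is bounded, then $L_{\fg_0}(\mu)$ is bounded for every such $\mu$.

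This last implication is the hard part. I expect to prove it using the explicit description of the bounded simple $\fg_0$-modules (Benkart--Britten--Lemire / Mathieu, recalled in~\S\ref{boundsp}, together with~\Thm{thm12n}): $\fg_0=[\fg_0,\fg_0]$ is a product of type $A$ and type $C$ (and possibly $G_2$ or $\fso$) simple factors, on each of which the highest weight of a bounded simple module satisfies a transparent numerical condition controlled by its integral root subsystem, which here equals $\Delta(\lambda)$ for every factor in question. The task is then to check, factor by factor, that this condition, once it holds at $\lambda-2\rho_1$, also holds at every $\mu$ with $\lambda-2\rho_1\le\mu\le\lambda$, $\Delta(\mu)=\Delta(\lambda)$, that is a composition factor highest weight of $M$; the $\fsl_2$ and rank-one factors are automatic, the type $C$ factors follow as in the ``if'' part of~\Thm{thm12n}, and the remaining (finite-dimensional-forcing) factors require comparing dominance of $\mu$ with that of $\lambda-2\rho_1$ along sums of positive odd roots. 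Granting this, the chosen composition factors are all bounded, hence $M$ and therefore $L(\lambda)$ is bounded by (i); the remaining ingredients (finite length of $M$, the boundedness criterion in terms of composition factors, the PBW identity, and the top-weight estimate) are routine by comparison.
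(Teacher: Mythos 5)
Your part (i) is correct, and it is the paper's argument in only slightly different clothing: your identity $L=\Lambda(\fg_1)\,N$ is exactly the statement that $L$ is a quotient of $\Ind_{\fg_0}^{\fg}N\cong N\otimes\Lambda\fg_1$, which the paper extracts from Frobenius reciprocity; the uniform bound $\dim L_{\nu}\le 2^{\dim\fg_1}\sup_{\mu}\dim N_{\mu}$ is the same estimate.

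Part (ii) is where the proposal breaks down. First, you have missed that (ii) follows from the \emph{same} induction computation in two lines: $\Ind_{\fg_0}^{\fg}L_{\fg_0}(\lambda-2\rho_1)\cong L_{\fg_0}(\lambda-2\rho_1)\otimes\Lambda\fg_1$ as $\fg_0$-modules is bounded, and its maximal weight is $(\lambda-2\rho_1)+\sum_{\alpha\in\Delta_1^+}\alpha=\lambda$, so $L(\lambda)$ is a subquotient of a bounded module and hence bounded; no analysis of $\Res^{\fg}_{\fg_0}L(\lambda)$ is needed. Second, the route you propose instead is not merely longer but has a genuine hole: its key step is deferred to a ``factor by factor check'' that is never carried out, and in the form you state it --- verify boundedness of $L_{\fg_0}(\mu)$ for \emph{every} $\mu$ with $\lambda-2\rho_1\le\mu\le\lambda$, $\lambda-\mu$ a sum of distinct positive odd roots, and $\Delta(\mu)=\Delta(\lambda)$ --- the claim is false. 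For example, take $\fg=\mathfrak{osp}(1|4)$, so $\fg_0=\mathfrak{sp}_4$, $2\rho_1=\delta_1+\delta_2$, and choose $\lambda$ with $\lambda-2\rho_1+\rho_0=\tfrac32\delta_1+\tfrac12\delta_2$; then $L_{\fg_0}(\lambda-2\rho_1)$ is bounded by \S\ref{boundsp}, yet $\mu=\lambda-\delta_1$ satisfies all your constraints, has $\mu+\rho_0=\tfrac32(\delta_1+\delta_2)$ and $\Delta(\mu)=D_2=\Delta(\lambda)$, but $(\mu+\rho_0,(\delta_1-\delta_2)^{\vee})=0$, so $L_{\fg_0}(\mu)$ is unbounded. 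To rescue your argument you would have to show that such $\mu$ never actually occur as composition factors of $\Res^{\fg}_{\fg_0}L(\lambda)$, and that is precisely the hard Enright-functor analysis carried out in Theorem~\ref{thm12n} and Proposition~\ref{propni} --- far heavier machinery than this lemma requires or is meant to invoke.
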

\begin{proof}
  Let $N$ be a $\fg_0$-module. One has
$\Ind^{\fg}_{\fg_0} N=N\otimes \Lambda\fg_1$ as $\fg_0$-modules.
  Therefore $\Ind^{\fg}_{\fg_0} N$ is bounded if $N$ is bounded
  and  the maximal weight of $\Ind^{\fg}_{\fg_0}L_{\fg_0}(\nu)$ is
  equal to $\nu+\sum_{\alpha\in\Delta_1^+}\alpha=\nu+2\rho_1$.
  In particular, $L(\nu+2\rho_1)$ is a subquotient of
  $\Ind^{\fg}_{\fg_0}L_{\fg_0}(\nu)$ and this gives (ii).

 For (i) let $N$ be a bounded $\fg_0$-submodule of a simple
 $\fg$-module $L$. Since
 $$\Hom_{\fg_0}(N,L)=\Hom_{\fg}(\Ind_{\fg_0}^{\fg}N, L),$$
 the module $L$ is  bounded.
\end{proof}

\subsubsection{}
Assume that $L$ is bounded. Let $\ft$ be a component of
$\fg_{ni}$. Let $v$ be a primitive vector of $L$ with respect
to the base $\Sigma_{\ft}$. Then $v$ has weight $\lambda^{\ft}$
and, by~\Lem{lemPit},
$\cU(\ft)v\cong L_{\ft}((\lambda^{\ft})_{\ft})$ is a bounded $\ft$-module.
This establishes the ``only if'' part.

Now assume that $L_{\ft}((\lambda^{\ft})_{\ft})$ is bounded for
each component of $\fg_{ni}$. Let us show that $L$ contains a
bounded $\fg_0$-submodule.
If $\fg=\mathfrak{osp}(1|2n)$, the assertion is tautological.
For $\fg=D(2,1,a)$ any $\fg_0$-submodule
of $L(\lambda)$ is bounded.

Consider the remaining case when
$\fg\not=\mathfrak{osp}(1|2n), D(2,1,a)$.
Let $\ft$ be the following component of $\fg_{ni}$:
for $\fg=\mathfrak{osp}(m|2n)$ let $\ft=\mathfrak{o}_m$, for $\fg=\fgl(m|n)$
with $m\leq n$ let $\ft=\fsl_n$,
and for $\fg=G(2)$ (resp., $F(4)$) let $\ft=G_2$ (resp.,
$\ft=\mathfrak{o}_7$).
Set $\Sigma:=\Sigma_{\ft}$ and $\lambda:=\lambda^{\ft}$.
Let $v_{\lambda}$ be the highest weight vector of $L$. Let us show that
$\cU(\fg_0)v_{\lambda}$ is a bounded $\fg_0$-module.
One has $\fg_{ni}=\ft\times\ft'$ and
$[\fg_0,\fg_0]=\ft\times \ft'_0$,
where $\ft'_0=A_1$ for $F(4), G_2$ and $\ft'_0=\mathfrak{sp}_{2n}$
(resp., $\ft'_0=\ft'=\fsl_m$)
for
$\fg=\mathfrak{osp}(m|2n)$ (resp., for $\mathfrak{gl}(m|n)$).
 Set
$$E:=\cU(\ft)v_{\lambda},\ E':=\cU(\ft'_0)v_{\lambda}.$$
By~\Lem{lemPit}(i) one has $E=L_{\ft}((\lambda^{\ft})_{\ft})$, so $E$ is a simple
bounded $\ft$-module. If $\fg=\mathfrak{gl}(m|n)$, then
$\Sigma$ contains $\Pi(\ft')$, so $E'=L_{\ft'}((\lambda^{\ft'})_{\ft'})$ is a simple
bounded $\ft'$-module.
If $\ft'_0\cong A_1$,
then any module in $\CO(\ft')$ is
bounded, so $E'$ is a bounded $\ft'$-module.
In the remaining case one has $\ft'_0=\mathfrak{sp}_{2n}, n>1$. Since $L_{\ft'}((\lambda^{\ft'})_{\ft'})$
is bounded,~\Prop{propni} implies that $Res_{\ft'}^{\ft} L(\lambda)\in\CB(\ft')$
and thus, by~\Prop{thmcomred}, any cyclic $\ft'_0$-submodule of $L(\lambda)$ is bounded.
We conclude that $E'$ is a bounded $\ft'_0$-module.

View $E\otimes E'$ as a $\ft\times\ft'_0$-module
by
$$gg'(e\otimes e'):=ge\otimes g'e'\ \text{ for }g\in\ft,g'\in\ft', e\in E,
e'\in E'.$$

By above, $E,E'$  are bounded. Each weight space of $E\otimes E'$ is of the form
$(E\otimes E')_{\nu}=E_{\nu_1}\otimes  E_{\nu_2}$,
so $E\otimes E'$ is a bounded $\ft\times\ft'_0$-module.

Set $N:=\cU(\fg_0)v_{\lambda}$. Since $\fg_0=[\fg_0,\fg_0]\times \cZ(\fg_0)$ one has
$$N=\cU([\fg_0,\fg_0])v_{\lambda}=\cU(\ft\times \ft'_0)v_{\lambda}.$$
The natural map $\phi: E\otimes E'\to \cU(\ft\times\ft'_0)v_{\lambda}=N$
defined by $uv_{\lambda}\otimes u'v_{\lambda}\mapsto uu'v_{\lambda}$
is a surjective homomorphism of $\ft\times\ft'_0$-modules.
Hence $N$ is a  bounded $\ft\times\ft'_0$-module and thus
$N$ is a bounded $\fg_0$-submodule
of $L$. Now~\Lem{lemrho1} completes the proof.
\qed

\subsection{}
To check the boundedness of $L_{\ft}((\lambda^{\ft})_{\ft})$ for all $\ft$ could be computational-heavy procedure. These computations could be shorten with the aid of  \Cor{cor1} and~\Thm{thms2} below.

It turns out that
 for $\fg\not=\mathfrak{osp}(m|2n)$, it is enough to
 consider only one distinguished set of simple roots.

\subsubsection{}
\begin{cor}{cor1}
(i) If all components of $\fg_{ni}$ have rank one, then $L(\lambda)$ is
bounded for any $\lambda$.

Assume that $\ft$ is a component of $\fg_{ni}$ of rank greater
  than one. Set  $\Sigma:=\Sigma_{\ft}$.

 (ii) If $\fg\not=\mathfrak{osp}(m|2n)$, then $L(\lambda)$ is bounded if
 and only if
 $L_{\fg_0}(\lambda)$ is bounded.

 (iii) If $\fg=\mathfrak{osp}(m|2n)$ with $m=2,3,4$ or $n=1$, then
 $L(\lambda)$ is bounded if
 and only if $L_{\ft}(\lambda)$ is bounded.
\end{cor}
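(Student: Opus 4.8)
**The plan is to derive all three parts of \Cor{cor1} from \Thm{thmbound} by analyzing, case by case, the components of $\fg_{ni}$ and the relationship between the highest weights $\lambda^{\ft}$ for varying $\ft$.** For part (i), note that if every component $\ft$ of $\fg_{ni}$ has rank one, then $\ft$ is either $\fsl_2 = A_1$ or $\mathfrak{osp}(1|2)$, and by \S \ref{boundsp} every module in $\CO(\ft)$ is bounded (since $\dim L_{\ft}(\mu)_{\nu} \leq 1$). Hence $L_{\ft}((\lambda^{\ft})_{\ft})$ is automatically bounded for each component, and \Thm{thmbound} gives the claim.

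For part (ii), fix the component $\ft$ of rank $>1$ and set $\Sigma := \Sigma_{\ft}$, $\lambda := \lambda^{\ft}$. The point is that for $\fg \neq \mathfrak{osp}(m|2n)$ (so $\fg = \fgl(m|n)$, $D(2,1,a)$, $F(4)$, or $G(3)$), \emph{all} distinguished bases for $\fg$ that contain a rank-$>1$ component of $\fg_{ni}$ contain $\Pi_0$ (for $A(m|n)$, by the Remark after \S \ref{subsetSigma}, both distinguished bases contain $\Pi_0$; for $D(2,1,a)$ every component has rank one so the hypothesis is vacuous; for $F(4), G(3)$ the unique relevant distinguished set containing $G_2$ or $\mathfrak{o}_7$ also contains the $\fsl_2$-part, hence all of $\Pi_0$). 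Therefore $\rho_{\Sigma_{\ft}}$ restricts correctly and one has $(\lambda + \rho)_{\ft'} - \rho_{\ft'} = (\lambda^{\ft'})_{\ft'}$ for \emph{every} component $\ft'$ of $\fg_{ni}$, because $\Pi(\ft') \subset \Sigma$ for all $\ft'$. Now by \Lem{lemPit}(i) applied inside $\fg_0$, the highest weight of $L_{\fg_0}(\lambda)$ restricted to any component of $[\fg_0,\fg_0]$ is exactly $\lambda_{\ft'}$; since the components of $\fg_0$ are the even parts of the components of $\fg_{ni}$ (as noted at the end of \S \ref{subsetSigma}), and for these $\fg$ one has $\ft'_0 = \ft'$ or $\ft'_0 = A_1$ with $\ft' = \mathfrak{osp}(1|2n)$ — and in the latter exceptional cases the relevant boundedness conditions for $\ft'$ and $\ft'_0$ coincide by \S \ref{boundsp}, \Thm{thm12n} — one concludes that $L_{\fg_0}(\lambda)$ is bounded iff $L_{\ft'}((\lambda^{\ft'})_{\ft'})$ is bounded for each $\ft'$, which by \Thm{thmbound} is equivalent to boundedness of $L(\lambda)$.

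For part (iii), $\fg = \mathfrak{osp}(m|2n)$ with $m \in \{2,3,4\}$ or $n = 1$, and $\fg_{ni} = \mathfrak{o}_m \times \ft'$ where $\ft' = \mathfrak{sp}_{2n}$ ($m$ even) or $\mathfrak{osp}(1|2n)$ ($m$ odd). The hypothesis forces the unique rank-$>1$ component to be $\ft'$ when $m \leq 4$ (since $\mathfrak{o}_m$ has rank $\leq 2$, and for $m = 4$, $\mathfrak{o}_4 \cong \fsl_2 \times \fsl_2$ has only rank-one components) — wait, for $m=4$ the component $\mathfrak{o}_4$ decomposes, so in all these cases either $\ft = \ft'$ has rank $>1$, or $n=1$ forces $\ft$ to be the $\mathfrak{o}_m$-part. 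I would split: if $n > 1$ then $\ft = \ft' = \mathfrak{sp}_{2n}$ or $\mathfrak{osp}(1|2n)$, $\Sigma_{\ft'}$ contains $\Pi(\ft')$, and the other component $\mathfrak{o}_m$ ($m \leq 4$) gives a module $L_{\mathfrak{o}_m}((\lambda^{\ft})_{\mathfrak{o}_m})$ which is \emph{always} bounded (as recorded in the Example after \Thm{thmbound}: for $m = 1,2,3,4$ any such $\mathfrak{o}_m$-module is bounded); hence \Thm{thmbound} collapses to the single condition that $L_{\ft}(\lambda_{\ft}) = L_{\ft}(\lambda)$ be bounded — here using $\rho_{\Sigma_{\ft}} = \rho_{\ft}$ so that $\lambda_{\ft}$ in $\ft$-terms is read off correctly from $\lambda = \lambda^{\ft}$. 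If instead $n = 1$, then $L_{\ft'}((\lambda^{\ft'})_{\ft'})$ is a module over $\mathfrak{sp}_2$ or $\mathfrak{osp}(1|2)$, which is always bounded, so again \Thm{thmbound} reduces to boundedness of $L_{\mathfrak{o}_m}(\lambda_{\mathfrak{o}_m})$, and choosing $\Sigma = \Sigma_{\ft}$ with $\ft = \mathfrak{o}_m$ (which then has rank $>1$, forcing $m \in \{3,4\}$, or $m=2$ with rank one handled by (i)) gives the statement. \textbf{The main obstacle I anticipate is bookkeeping the $\rho$-shifts:} one must check carefully that $(\lambda^{\ft})_{\ft}$, computed via the distinguished base $\Sigma_{\ft}$, is exactly the highest weight needed to feed into the \S \ref{boundsp}/\Thm{thm12n} boundedness criteria for $\ft$, and that switching between $\fg_0$-boundedness and $\fg_{ni}$-component boundedness in part (ii) is valid precisely because the extra central directions $\cZ(\fg_0)$ and the passage $\mathfrak{osp}(1|2n) \leftrightarrow \mathfrak{sp}_{2n}$ do not affect boundedness — this last point for the $\mathfrak{osp}(1|2n)$ versus $\mathfrak{sp}_{2n}$ dichotomy being exactly the content of \Thm{thm12n}.
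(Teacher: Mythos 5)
Your argument is correct and follows essentially the same route as the paper: all three parts are read off from \Thm{thmbound} by observing that the boundedness condition for a rank-one component is vacuous, that for $A(m|n)$ one has $\Pi_0\subset\Sigma_{\ft}$, and that in the remaining cases of (ii) and in all cases of (iii) there is at most one component of rank greater than one (the paper's proof is in fact terser than yours and only spells out the $A(m|n)$ case of (ii) explicitly). One small correction: your claim that for $F(4)$ and $G(3)$ the distinguished base $\Sigma_{\ft}$ ($\ft=\mathfrak{o}_7$, resp. $G_2$) contains all of $\Pi_0$ is false --- that base consists of $\Pi(\ft)$ together with one \emph{isotropic} root, and does not contain the simple root of the $\fsl_2$-factor of $\fg_0$. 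This does not damage the proof, since that other component of $\fg_{ni}$ has rank one, so both the corresponding condition in \Thm{thmbound} and the boundedness of the $\fsl_2$-factor of $L_{\fg_0}(\lambda)$ hold automatically and no identification of highest weights for that component is needed.
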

\begin{proof}
  If $\ft'$ is a component of $\fg_{ni}$ of rank one, then any module in
  $\CO(\ft')$ is bounded and  (i)  follows from~\Thm{thmbound}.

 If $\ft$ is a unique component of $\fg_{ni}$ which has  rank greater than one, then~\Thm{thmbound} implies that $L(\lambda)$ is bounded if
 and only if $L_{\ft}(\lambda)$ is bounded.
 Note that  $\fg_{ni}$ contains more than one component of rank greater than one
  in the following cases: $\fg=\mathfrak{osp}(m|2n)$ with $m>4,n>1$
  and $A(m|n)$ with $m,n>1$; this gives (iii).
   For $\fg=A(m|n)$ one has $\Sigma_0\subset\Sigma_{\ft}$, so
  (ii) follows from~\Thm{thmbound}.
  \end{proof}

\subsection{Reduction to $n=2$}
Let $\fg=\mathfrak{osp}(m|2n)$. Take
$$\Sigma:=\Sigma_{\mathfrak{o}_m}.$$
For $n>2$ we consider the subalgebra
$$\mathfrak{osp}(m|4)\subset
\mathfrak{osp}(m|2n)$$
with the set of simple roots lying in $\Sigma$.
For instance, for $\mathfrak{osp}(2s+1|2n)$ we have
$$\Sigma=\{\delta_1-\delta_2,\ldots, \delta_{n-1}-\delta_n,\delta_n-\vareps_1,\ldots,
\vareps_{s-1}-\vareps_s,\vareps_s\}$$
and we take $\mathfrak{osp}(2s+1|4)$ to be the subalgebra with  the set of simple roots
$\{\delta_{n-1}-\delta_n,\delta_n-\vareps_1,\ldots,
\vareps_{s-1}-\vareps_s,\vareps_s\}$.

\subsubsection{}
\begin{thm}{thms2}
For $n>2$ the module $L_{\mathfrak{osp}(m|2n)}(\lambda)$ is bounded if
and only
if the modules
$L_{\mathfrak{sp}_{2n}}(\lambda_{\mathfrak{sp}_{2n}})$
and
$L_{\mathfrak{osp}(m|4)}(\lambda_{\mathfrak{osp}(m|4)})$
are bounded.
\end{thm}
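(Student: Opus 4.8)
The plan is to derive the statement from \Thm{thmbound}, applied both to $\mathfrak{osp}(m|2n)$ and to its subalgebra $\mathfrak{osp}(m|4)$, together with the explicit description of boundedness in~\S\ref{boundsp}; the ``only if'' direction can in addition be obtained directly. For that direction, suppose $L:=L_{\mathfrak{osp}(m|2n)}(\lambda)$ is bounded and let $v_\lambda$ be a highest weight vector. By construction the simple roots of $\mathfrak{osp}(m|4)$ lie in $\Sigma=\Sigma_{\mathfrak{o}_m}$, and the positive roots $\delta_i\pm\delta_j,\ 2\delta_i$ of $\mathfrak{sp}_{2n}$ are positive roots of $\mathfrak{osp}(m|2n)$ for the ordering attached to $\Sigma$; hence both $\mathfrak{osp}(m|4)$ and $\mathfrak{sp}_{2n}$ are subalgebras with compatible triangular decompositions. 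By~\Lem{lemPit}(ii) the cyclic submodules $\cU(\mathfrak{osp}(m|4))v_\lambda$ and $\cU(\mathfrak{sp}_{2n})v_\lambda$ are bounded (the first is in fact simple, $=L_{\mathfrak{osp}(m|4)}(\lambda_{\mathfrak{osp}(m|4)})$, by~\Lem{lemPit}(i)), so their unique simple quotients $L_{\mathfrak{osp}(m|4)}(\lambda_{\mathfrak{osp}(m|4)})$ and $L_{\mathfrak{sp}_{2n}}(\lambda_{\mathfrak{sp}_{2n}})$ are bounded.

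For the ``if'' direction I would apply~\Thm{thmbound} twice. For $\mathfrak{osp}(m|2n)$ one has $\fg_{ni}=\mathfrak{o}_m\times\ft$ with $\ft=\mathfrak{sp}_{2n}$ if $m$ is even and $\ft=\mathfrak{osp}(1|2n)$ if $m$ is odd, so $L$ is bounded iff $L_{\mathfrak{o}_m}(\lambda_{\mathfrak{o}_m})$ is bounded (using $\Sigma=\Sigma_{\mathfrak{o}_m}$, whence $\Pi(\mathfrak{o}_m)\subset\Sigma$ and $(\lambda^{\mathfrak{o}_m})_{\mathfrak{o}_m}=\lambda_{\mathfrak{o}_m}$) and the $\ft$-component of $L$ is bounded. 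For $\mathfrak{osp}(m|4)$, whose base is the restriction of $\Sigma$ (still containing $\Pi(\mathfrak{o}_m)$) and whose $\delta$-directions are $\delta_{n-1},\delta_n$, one has $\fg_{ni}=\mathfrak{o}_m\times\ft_2$ with $\ft_2=\mathfrak{sp}_4$ or $\mathfrak{osp}(1|4)$; since $\rho_{\Sigma}$ agrees on $\mathfrak{o}_m\cap\fh$ with the corresponding Weyl vector for $\mathfrak{osp}(m|4)$, the module $L_{\mathfrak{osp}(m|4)}(\lambda_{\mathfrak{osp}(m|4)})$ is bounded iff the same $L_{\mathfrak{o}_m}(\lambda_{\mathfrak{o}_m})$ is bounded and the $\ft_2$-component is bounded. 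If $L_{\mathfrak{o}_m}(\lambda_{\mathfrak{o}_m})$ is not bounded, all three modules in the statement fail to be bounded; so we may assume it is, and the theorem reduces to: \emph{the $\ft$-component of $L$ is bounded if and only if the $\ft_2$-component of $L_{\mathfrak{osp}(m|4)}(\lambda_{\mathfrak{osp}(m|4)})$ and the module $L_{\mathfrak{sp}_{2n}}(\lambda_{\mathfrak{sp}_{2n}})$ are bounded.}

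This last equivalence is a statement about highest weight modules over $\mathfrak{sp}_{2\bullet}$ and $\mathfrak{osp}(1|2\bullet)$, to be checked with~\S\ref{boundsp}. Write $\lambda+\rho_{\Sigma}=\sum_i x_i\vareps_i+\sum_j y_j\delta_j$. Each of the three modules has highest weight plus $\rho$, in the $\delta$-coordinates, equal to $(y_1,\dots,y_n)$, resp.\ $(y_{n-1},y_n)$, resp.\ $(y_1,\dots,y_n)$, each translated by a fixed positive amount — an integer in the symplectic case, with a further $\tfrac12$ in the $\mathfrak{osp}(1|\cdot)$ case — coming from the $\rho$-shift between $\Sigma_{\mathfrak{o}_m}$ and the distinguished base of the component in question, i.e.\ from the isotropic roots crossed by the connecting odd reflections. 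By~\S\ref{boundsp} boundedness of such a module means: $w_i-w_{i+1}\in\mathbb{Z}_{>0}$ for consecutive indices, $w_{\mathrm{last}-1}+w_{\mathrm{last}}\in\mathbb{Z}_{>0}$, and a congruence-plus-positivity condition on $w_{\mathrm{last}}$. One then sees that (for typical $\lambda$) the chain conditions $y_i-y_{i+1}\in\mathbb{Z}_{>0}$, $1\le i\le n-1$, coincide for the $\ft$-component and for $L_{\mathfrak{sp}_{2n}}(\lambda_{\mathfrak{sp}_{2n}})$ (translations cancel in differences), the conditions on $y_{n-1}+y_n$ and on $y_n$ are imposed in sharp form by the $\ft_2$-component, and the remaining, translated conditions carried by $L_{\mathfrak{sp}_{2n}}(\lambda_{\mathfrak{sp}_{2n}})$ are then automatic, a positive translation only weakening a requirement ``$\cdot\in\mathbb{Z}_{>0}$'' or ``$\cdot>0$''. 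This yields the equivalence, hence the theorem; the case $n=2$ is contained in it.

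I expect the main obstacle to be exactly this last comparison, carried out uniformly in the parity of $m$ and — above all — for all $\lambda$. For atypical $\lambda$ the highest weight of the $\ft$-component with respect to $\Sigma_{\ft}$ differs from the restriction of $\lambda$ by a nonnegative combination $\sum_i t_i\delta_i$ of the $\delta_i$, so its $\delta$-coordinates become $(y_i+t_i)$ rather than $(y_i)$; one must check that this perturbation, which is matched by a compatible perturbation of the $\ft_2$-datum of $\mathfrak{osp}(m|4)$ (the atypicalities that $\mathfrak{osp}(m|4)$ sees are those of $\mathfrak{osp}(m|2n)$ in the last two $\delta$-directions), does not disturb the chain/sum/tail comparison above. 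This bookkeeping, rather than any conceptual point, is where the work lies.
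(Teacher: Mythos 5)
Your ``only if'' direction is correct and is essentially the paper's argument: both $\mathfrak{osp}(m|4)$ and $\mathfrak{sp}_{2n}$ sit inside $\fg$ with compatible triangular decompositions, so \Lem{lemPit}(ii) bounds the cyclic submodules $\cU(\mathfrak{osp}(m|4))v_\lambda$ and $\cU(\mathfrak{sp}_{2n})v_\lambda$, hence their simple quotients. The problem is the ``if'' direction, where you propose a genuinely different route (applying \Thm{thmbound} to both $\mathfrak{osp}(m|2n)$ and $\mathfrak{osp}(m|4)$ and then comparing the explicit boundedness conditions of \S\ref{boundsp} in coordinates) and leave its decisive step undone. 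The reduction to the statement about the $\ft$- and $\ft_2$-components is fine, but the claimed equivalence of the coordinate conditions is only sketched for typical $\lambda$ and explicitly deferred for atypical $\lambda$. That deferred step is not mere bookkeeping: the highest weight $(\lambda^{\ft})_{\ft}$ is computed with respect to $\Sigma_{\ft}$, which differs from $\Sigma_{\mathfrak{o}_m}$ by a chain of odd reflections, and for atypical $\lambda$ the correction to $\lambda+\rho$ is $\sum_{\beta}\beta$ over the crossed isotropic roots $\beta=\delta_j\pm\vareps_i$ on which $\lambda+\rho$ vanishes. Its $\delta$-part $\sum_j t_j\delta_j$ has coefficients $t_j$ that depend on $j$ (they count how many atypicality conditions involve $\delta_j$), so the chain conditions become $(y_i+t_i)-(y_{i+1}+t_{i+1})\in\mathbb{Z}_{>0}$ and the translations do \emph{not} cancel in differences. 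Verifying that these perturbed conditions are still equivalent to the conjunction of the $\mathfrak{sp}_{2n}$- and $\mathfrak{osp}(m|4)$-conditions requires a genuine case analysis over the possible atypicality patterns, which you have not supplied; since avoiding exactly this computation is the point of the theorem, the proposal as it stands has a real gap.

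For comparison, the paper's ``if'' direction sidesteps all coordinates: by \Lem{lemrho1}(i) it suffices to bound $N=\cU(\fg_0)v_\lambda$, which is a quotient of $E\otimes E'$ with $E=\cU(\mathfrak{o}_m)v_\lambda$ and $E'=\cU(\mathfrak{sp}_{2n})v_\lambda$. Here $E$ is a cyclic $\mathfrak{o}_m$-submodule of the bounded module $L_{\mathfrak{osp}(m|4)}(\lambda_{\mathfrak{osp}(m|4)})$, hence bounded; and $E'$ is a highest weight $\mathfrak{sp}_{2n}$-module whose $\mathfrak{sl}_n$-top is finite dimensional (from boundedness of $L_{\mathfrak{sp}_{2n}}(\lambda_{\mathfrak{sp}_{2n}})$) and whose $\mathfrak{sp}_4$-top is bounded (from boundedness of $E''$), so $E'$ is bounded by Mathieu's criterion. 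If you want to salvage your approach, you would need to either carry out the atypical comparison in full or replace it with a structural argument of this kind.
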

\begin{proof}
Denote by $v_{\lambda}$ the highest weight vector of
$L(\lambda):=L_{\mathfrak{osp}(m|2n)}(\lambda)$
  and set
  $$E:=\cU(\mathfrak{o}_m)v_{\lambda},\ \ E':=\cU(\mathfrak{sp}_{2n})v_{\lambda},\ \
  E'':=\cU(\mathfrak{osp}(m|4))v_{\lambda},\ \
  N:=\cU(\fg_0)v_{\lambda}.$$

By~\Lem{lemPit}, $E''\cong
L_{\mathfrak{osp}(m|4)}(\lambda_{\mathfrak{osp}(m|4)})$.
Since $E'$ has the highest weight
$\lambda_{\mathfrak{sp}_{2n}}$, the module
$L_{\mathfrak{sp}_{2n}}(\lambda_{\mathfrak{sp}_{2n}})$ is a quotient
of $E'$.

If $L(\lambda)$ is bounded, then all modules $E,E',E'', N$ are
bounded by~\Lem{lemPit}(ii). This implies the ``only if'' part.

Now assume that $L_{\mathfrak{sp}_{2n}}(\lambda_{\mathfrak{sp}_{2n}})$
and
$L_{\mathfrak{osp}(m|4)}(\lambda_{\mathfrak{osp}(m|4)})$ are bounded.
By~\Lem{lemrho1}
(i) in order to show that $L_{\mathfrak{osp}(m|2n)}(\lambda)$ is
bounded
it is enough to verify $N$ is a bounded $\fg_0$-module.
Arguing as in the proof of~\Thm{thmbound}, we see that
$N$ is a quotient of $E\otimes E'$, where $E\otimes E'$
is viewed as $\fg_0$-module ($\fg_0=\mathfrak{o}_m\times \mathfrak{sp}_{2n}$)
and that the boundedness of $N$ follows from the boundedness of
$E$ and of $E'$. Since
$\mathfrak{o}_m\subset \mathfrak{osp}(m|4)$, $E$ is a cyclic
 $\mathfrak{o}_m$-submodule
of $E''\cong L_{\mathfrak{osp}(m|4)}(\lambda_{\mathfrak{osp}(m|4)})$, so
$E$ is bounded by~\Lem{lemPit}(ii). It remains to verify the
boundedness of $E'$.

Note that $E'$ is a $\mathfrak{sp}_{2n}$-module generated
by its highest weight  vector $v_{\lambda}$ which is of the weight
$$\lambda':=\lambda_{\mathfrak{sp}_{2n}}.$$

Write
$\Pi':=\{\delta_1-\delta_2,\ldots,\delta_{n-1}-\delta_n\},\
\Pi(\mathfrak{sp}_{2n})=\Pi'\cup\{2\delta_n\}$.
Consider the copy of $\mathfrak{sl}_n$ in $\fg$ with the set
of simple roots $\Pi'$ and the copy of  $\mathfrak{sp}_4$ in $\fg$
with the set of simple roots
$\{\delta_{n-1}-\delta_n,2\delta_n\}$.
By~\Lem{lemPit},
the  $\mathfrak{sl}_n$-submodule generated by $v_{\lambda}$ is
isomorphic to $L_{\mathfrak{sl}_n}(\lambda_{\mathfrak{sl}_n})$.
Note that  $\mathfrak{sl}_n\subset \mathfrak{sp}_{2n}$ and
$\lambda'_{\mathfrak{sl}_n}=\lambda_{\mathfrak{sl}_n}$.
By~\Lem{lemPit},
the  $\mathfrak{sl}_n$-submodule generated by the highest weight
vector in $L_{\mathfrak{sp}_{2n}}(\lambda_{\mathfrak{sp}_{2n}})$ is
isomorphic to $L_{\mathfrak{sl}_n}(\lambda_{\mathfrak{sl}_n})$.
Since $L_{\mathfrak{sp}_{2n}}(\lambda_{\mathfrak{sp}_{2n}})$ is
bounded, $L_{\mathfrak{sl}_n}(\lambda_{\mathfrak{sl}_n})$ is
finite-dimensional, see~\S \ref{boundsp}.

Since $E''$ is bounded and $\mathfrak{sp}_4\subset
\mathfrak{osp}(m|4)$, the
$\mathfrak{sp}_4$-submodule generated by $v_{\lambda}$ is
bounded.
We conclude that $E'$ is an $\mathfrak{sp}_{2n}$-module with
 the following properties:

$E'$ is generated by the highest weight vector $v_{\lambda'}$;

 $\cU(\mathfrak{sl}_n)v_{\lambda'}$ is a simple finite-dimensional
 $\mathfrak{sl}_n$-module;

 $\cU(\mathfrak{sp}_4)v_{\lambda'}$ is a simple bounded
$\mathfrak{sp}_4$-module.

By the description of the simple bonded highest weight modules of $\mathfrak{sp}_{2n}$ (see \S\ref{boundsp}), $E'$ is  bounded. This completes the proof.
 \end{proof}

\section{Strongly typical modules for  $\mathfrak{osp}(m|2n)$}
\label{strtyp}
In this section $\fg=\mathfrak{osp}(m|2n)$.

A weight $\lambda$ is called {\em strongly typical }
if $(\lambda+\rho,\beta)\not=0$ for each $\beta\in\Delta_1$;
the module $L(\lambda)$ is called strongly typical if $\lambda$ is
strongly typical.

\subsection{Notation}\label{ospm2nnot}
We set
$$s:=\left \lfloor \frac{m}{2} \right\rfloor;\ \ p(m):=0\ \text{ if $m$ is even, $p(m):=1$
  if $m$ is odd}.$$

 One has
 $\fg_{ni}=\mathfrak{o}_m\times\mathfrak{sp}_{2n}$ for even $m$ and
$\fg_{ni}=\mathfrak{o}_m\times\mathfrak{osp}_{1|2n}$ for odd $m$. We write for convenience  $\fg_{ni}=\mathfrak{o}_m\times\mathfrak{osp}_{p(m)|2n}$, where $\mathfrak{osp}_{1|2n}=\mathfrak{osp}(1|2n)$
and $\mathfrak{osp}_{0|2n}=\mathfrak{sp}_{2n}$.

\subsubsection{}
We will use the standard notations of~\cite{K1} for $\Delta$, in particular, $\Delta(\mathfrak{o}_m)$ lies in the span of $\{\vareps_i\}_{i=1}^s$
and $\Delta(\mathfrak{sp}_{2n})$ lies in the span of $\{\delta_i\}_{i=1}^n$.
We set
$$\fh_{\vareps}:=\fh\cap \mathfrak{o}_m,\ \ \fh_{\delta}:=\fh\cap
\mathfrak{sp}_{2n}.$$
We identify $(\fh\cap \mathfrak{o}_m)^*$ with
$\fh^*_{\vareps}:=\mbox{span} \{\vareps_i\}_{i=1}^s$  and
$(\fh\cap \mathfrak{sp}_{2n})^*=(\fh\cap \mathfrak{osp}(1|2n))^*$
with $\fh_{\delta}^*:=\mbox{span} \{\delta_i\}_{i=1}^n$. One has
$$\fh=\fh_{\vareps}\oplus \fh_{\delta},\ \ \
\fh^*=\fh_{\vareps}^*\oplus \fh_{\delta}^*.$$

For $\lambda=\sum a_i\vareps_i+\sum b_j\delta_j$ we
set $\lambda_{\vareps}:=\sum a_i\vareps_i,\ \lambda_{\delta}:=\sum b_j\delta_j$.

In this section we use the  base $\Sigma=\Sigma_{\mathfrak{o}_m}$, i.e.
$$\begin{array}{ll}
\Sigma=\{\delta_1-\delta_2,\delta_2-\delta_3,\ldots,\delta_n-\vareps_1,
\vareps_1-\varesp_2,\ldots,\vareps_{s-1}-\vareps_s,
\vareps_s\},\ \text{ for }\mathfrak{osp}(2s+1|2n),\\
\Sigma=\{\delta_1-\delta_2,\delta_2-\delta_3,\ldots,\delta_n-\vareps_1,
\vareps_1-\varesp_2,\ldots,\vareps_{s-1}-\vareps_s,
\vareps_{s-1}+\vareps_s\},\ \text{ for }
\mathfrak{osp}(2s|2n).
\end{array}$$

Set
$$\xi:=\sum_{i=1}^n\delta_i.$$
Then $\rho_1=\rho_0-\rho=\frac{m}{2}\xi$ and  $\rho_{\fg_{ni}}-\rho=s\xi$.
We set
$$R_0:=\prod_{\alpha\in\Delta_0^+}(1-e^{-\alpha}),\ \
R_1:=\prod_{\alpha\in\Delta_1^+}(1+e^{-\alpha}), \ \ R:=R_0/R_1$$
and define $R_{\mathfrak{o}_m}, R_{\mathfrak{sp}_{2n}}, R_{\fg_{ni}}$
similarly. It is clear that $R_0=R_{\mathfrak{o}_m} R_{\mathfrak{sp}_{2n}}$.

\subsubsection{}\label{stabmu}
For $\mu\in\fh^*$ we set
$$\Delta_0(\mu):=\{\alpha\in\Delta_0(\mathfrak{sp}_{2n})|\ (\mu,\alpha)=0\}.$$
It is well-known that $\Stab_{W(\mathfrak{sp}_{2n})}\mu$ is generated by $r_{\alpha}$
with $\alpha\in\Delta_0(\mu)$ (this follows from~\S \ref{maxorb}).

We consider the root system $B_n$ of $\mathfrak{o}_n$ with the set of simple roots
$\Pi(B_n)=\{\delta_1-\delta_2,\ldots,\delta_n\}$ and denote by
$\Delta^+(B_n)$ the corresponding set of positive roots. We set
$$\mathcal{C}^+:=\{\mu\in\fh^*|\
(\mu,\alpha^{\vee})\not\in\mathbb{Z}_{<0}\ \text{ for }
\alpha\in \Delta^+(B_n)\}.$$
Note that
for any $\lambda\in\fh^*$ there exists $w\in W(\lambda)\cap W(\mathfrak{sp}_{2n})$ such that $w(\lambda+\rho)\in\mathcal{C}^+$.

\subsection{}
\begin{thm}{thmmate}
  Let $\nu\in\fh^*$ be a strongly typical weight such that
  $\nu+\rho\in\mathcal{C}^+$ and
$(\nu+\rho,\alpha)\not=0$ for $\alpha=\delta_i+\delta_j$
  with $1\leq i,j\leq n$. Then for each $z\in W(\nu)\cap W(\mathfrak{sp}_{2n})$
  one has
$$Re^{\rho}\ch L(z.\nu)=R_{\fg_{ni}}e^{\rho_{\fg_{ni}}}
\ch L_{\fg_{ni}} (z(\nu+\rho)-\rho_{\fg_{ni}}).$$
\end{thm}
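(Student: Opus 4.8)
The plan is to strip both sides down to ``Weyl numerators'' and reduce the identity to a matching of Verma-filtration multiplicities. Since $Re^{\rho}\ch M(\mu)=e^{\mu+\rho}$ and $R_{\fg_{ni}}e^{\rho_{\fg_{ni}}}\ch M_{\fg_{ni}}(\eta)=e^{\eta+\rho_{\fg_{ni}}}$, and since $z\in W(\nu)$, both $Re^{\rho}\ch L(z.\nu)$ and $R_{\fg_{ni}}e^{\rho_{\fg_{ni}}}\ch L_{\fg_{ni}}(z(\nu+\rho)-\rho_{\fg_{ni}})$ are \emph{finite} $\ZZ$-linear combinations of the exponentials $e^{w(\nu+\rho)}$ with $w$ in the finite integral Weyl group $W(\nu)$. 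Using $\rho+\rho_1=\rho_0$, $\rho_{\fg_{ni}}=\rho+s\xi$ and $(wz).\nu+\rho=(wz)(\nu+\rho)$, the assertion is reduced to: the multiplicity of $M(y.\nu)$ in a Verma filtration of $L(z.\nu)$ equals that of $M_{\fg_{ni}}\big(y(\nu+\rho)-\rho_{\fg_{ni}}\big)$ in a Verma filtration of $L_{\fg_{ni}}\big(z(\nu+\rho)-\rho_{\fg_{ni}}\big)$, for each $y\in W(\nu)$.

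For this I would invoke the theory of strongly typical central characters. Because $\nu$ is strongly typical for $\fg=\mathfrak{osp}(m|2n)$, the block of $\CO(\fg)$ containing $L(\nu)$ (equivalently $L(z.\nu)$) is equivalent, by Gorelik's equivalence of strongly typical blocks, to a block of $\CO(\fg_0)$, $\fg_0=\mathfrak{o}_m\times\mathfrak{sp}_{2n}$, by an exact functor sending $M(\mu)$ to $M_{\fg_0}(\mu-\rho_1)$ and $L(\mu)$ to $L_{\fg_0}(\mu-\rho_1)$; in particular Verma-filtration multiplicities are preserved. When $m$ is even this already finishes the proof, since $\fg_0=\fg_{ni}$, $\rho_0=\rho_{\fg_{ni}}$ and $R_0=R_{\fg_{ni}}$. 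When $m$ is odd I would apply the same theorem once more to the factor $\mathfrak{osp}(1|2n)$ of $\fg_{ni}=\mathfrak{o}_m\times\mathfrak{osp}(1|2n)$; this is legitimate because strong typicality of $\nu$ gives $(\nu+\rho,\delta_i)\ne 0$, which forces the relevant $\mathfrak{osp}(1|2n)$-central character to be strongly typical as well. This identifies a block of $\CO(\fg_{ni})$ with a block of $\CO(\fg_0)$, carrying $M_{\fg_{ni}}(\eta)$ to $M_{\fg_0}(\eta-\tfrac12\xi)$. Composing the two equivalences identifies the block of $\CO(\fg)$ with a block of $\CO(\fg_{ni})$ so that $M(\mu)\mapsto M_{\fg_{ni}}(\mu-s\xi)$ (note $s\xi=\rho_1-\tfrac12\xi$ and $\rho+s\xi=\rho_{\fg_{ni}}$) and $L(\mu)\mapsto L_{\fg_{ni}}(\mu-s\xi)$; taking $\mu=z.\nu$ gives the multiplicity statement of the first paragraph, hence the theorem. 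A more hands-on variant avoids the second appeal to Gorelik's theorem: realize $L(z.\nu)$ as an Enright-type twisted localization of a finite-dimensional strongly typical $\fg$-module along roots lying in $\spn\{\delta_i\}$, for which Kac's typical character formula for $\fg$ and the isotropic-root-free typical formula for $\fg_{ni}$ reduce both numerators to $\sum_{w\in W}\sgn(w)e^{w(\,\cdot\,+\rho)}$, and then transport the finite-dimensional identity by the same localization functor, which acts on the $\delta$-variables identically for $\fg$ and for $\fg_{ni}$.

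The hypotheses $\nu+\rho\in\mathcal{C}^+$, $(\nu+\rho,\delta_i+\delta_j)\ne 0$ and $z\in W(\mathfrak{sp}_{2n})$ are what place us in the bounded setting of this section: together with strong typicality and \S\ref{boundsp}, \Thm{thm12n}, \Thm{thmbound} they force $\Delta(z.\nu)\cap\Delta_0(\mathfrak{sp}_{2n})=D_n$ and make $z(\nu+\rho)-\rho_{\fg_{ni}}$ a $D_n$-dominant weight, so that the right-hand side is the character of a genuine bounded $\fg_{ni}$-module in the explicit form of \S\ref{boundsp}; keeping $z\in W(\mathfrak{sp}_{2n})$ leaves the $\mathfrak{o}_m$-part of the weight, hence boundedness on that side, undisturbed. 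I expect the main obstacle to be exactly the bookkeeping of the \emph{integral} root data through the half-integral shift by $s\xi$ in the odd-$m$ case: under this shift the coroots $2\delta_i$ move in and out of the integrality locus, so one must check that the linkage classes of the three central characters (for $\fg$, for $\fg_0$, for $\fg_{ni}$) correspond under the functors above — equivalently, that Gorelik's functors, or the localization functors, restrict correctly to the $\delta$-directions, which are precisely the directions in which $\fg$ and $\fg_{ni}$ differ. Once this correspondence is in place, the exponent bookkeeping of the first paragraph completes the proof.
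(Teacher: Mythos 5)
Your first paragraph (reduction to matching the coefficients in the expansion of both characters into Verma characters) and your identification of Gorelik's strongly typical equivalence as the main tool coincide with the paper's strategy. But the way you deploy the equivalence has a genuine gap, and it is not the ``bookkeeping'' you defer to the end --- it is the core of the argument. First, the normalization is wrong: the perfect-mate equivalence does not send $M(\mu)\mapsto M_{\fg_0}(\mu-\rho_1)$ uniformly on a block. It sends $M(y.\nu)=M(y(\nu+\rho)-\rho)$ to $M_{\fg_0}(y(\nu+\rho_0)-\rho_0)$, and these two recipes differ by $y\rho_1-\rho_1$, which is nonzero for general $y\in W(\mathfrak{sp}_{2n})$. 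Consequently your claim that the even-$m$ case is finished by a single application of the equivalence is false: the equivalence identifies the $\fg$-block of $\nu$ with the $\fg_0$-block of $\nu+\rho_0$, whereas the right-hand side of the theorem lives in the $\fg_0$-block of $\nu+\rho=\nu+\rho_0-s\xi$. These are different central characters. Likewise, in the odd-$m$ case your two equivalences do not compose: the first lands in the $\mathfrak{sp}_{2n}$-block of $(\nu+\rho)_{\delta}+\frac{m}{2}\xi$ and the second in the block of $(\nu+\rho)_{\delta}+\frac{1}{2}\xi$, again off by $s\xi$.

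Bridging that $s\xi$-shift is precisely what the paper's proof spends its effort on, and it is where the hypotheses $\nu+\rho\in\mathcal{C}^+$ and $(\nu+\rho,\delta_i+\delta_j)\neq 0$ are actually consumed: via \Lem{propmate}(i) one checks that the two $\mathfrak{sp}_{2n}$-weights $(\nu+\rho)_\delta+\frac{m}{2}\xi$ and $(\nu+\rho)_\delta+\frac{p(m)}{2}\xi$ differ by an element of the weight lattice, are both dominant in the relevant sense, and have the same singular roots (only roots of the form $\delta_i-\delta_j$ can be singular), so that Proposition 3.9 of \cite{KT} guarantees the Kazhdan--Lusztig coefficients $c^z_u$ and $d^z_u$ for the two blocks can be chosen equal. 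Without this translation-principle step the two applications of Gorelik's theorem produce character formulas in unrelated blocks and the theorem does not follow. Your closing remark correctly senses that ``the linkage classes of the three central characters'' must be matched, but you present no argument for it, you mislocate it as an odd-$m$-only issue, and you do not connect it to the stated hypotheses on $\nu$. The alternative ``twisted localization of a finite-dimensional typical module'' sketch is too vague to assess and is not needed once the [KT] comparison is in place.
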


\subsubsection{Remark}
Since $\fg_{ni}=\mathfrak{o}_m\times \mathfrak{osp}_{p(m)|2n}$
$$\ch L_{\fg_{ni}}(\lambda+\rho-\rho_{\fg_{ni}})=
 \ch L_{ \mathfrak{osp}_{p(m)|2n}}((\lambda+\rho-\rho_{\fg_{ni}})_{\delta})
\cdot \ch L_{\mathfrak{o}_m}((\lambda+\rho-\rho_{\fg_{ni}})_{\vareps}),$$
so
\begin{equation}\label{nuinu}\begin{array}{ll}
 e^{\rho_{\fg_{ni}}-\rho}\cdot \ch L_{\fg_{ni}}(\lambda+\rho-\rho_{\fg_{ni}})&=
e^{s\xi}\cdot\ch L_{ \mathfrak{osp}_{p(m)|2n}}(\lambda_{\delta}-s\xi)
\cdot \ch L_{\mathfrak{o}_m}(\lambda_{\vareps}).\end{array}\end{equation}

\subsubsection{}
\begin{cor}{corapp}
  Let $\lambda\in\fh^*$ be such that
$(\lambda+\rho,\alpha)\not=0$ for each $\alpha\in\Delta_1$ and either
$\Delta_0(\lambda+\rho)=\emptyset$
or
$(\lambda+\rho,2\delta_i)\in\mathbb{Z}\setminus\{0\}$
for $i=1,\ldots,n$.
 Then
$$\ch L(\lambda)=e^{s\xi}
\prod_{i=1}^s\prod_{j=1}^n (1+e^{-\vareps_i-\delta_j})(1+e^{\vareps_i-\delta_j})
\cdot \ch L_{ \mathfrak{osp}_{p(m)|2n}}(\lambda_{\delta}-s\xi)
\cdot \ch L_{\mathfrak{o}_m}(\lambda_{\vareps}).$$
\end{cor}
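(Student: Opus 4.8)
The plan is to deduce this from \Thm{thmmate} in two steps: first replace $\lambda$ by a $W(\mathfrak{sp}_{2n})$-conjugate weight $\nu$ with $\nu+\rho\in\mathcal{C}^+$ for which that theorem applies, and then identify the proportionality factor between the two sides of its formula with the product appearing in the statement.

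For the first step I would use \S\ref{stabmu} to pick $w\in W(\lambda)\cap W(\mathfrak{sp}_{2n})$ with $\nu+\rho:=w(\lambda+\rho)\in\mathcal{C}^+$, set $\nu:=w.\lambda$ and $z:=w^{-1}$, so that $z.\nu=\lambda$; since $\nu=w.\lambda$ with $w\in W(\lambda)$, one has $W(\nu)=W(\lambda)$, hence $z\in W(\nu)\cap W(\mathfrak{sp}_{2n})$. I then have to check that $\nu$ satisfies the hypotheses of \Thm{thmmate}. Every element of $W(\mathfrak{sp}_{2n})$ acts by signed permutations of $\{\delta_j\}$ fixing $\fh_{\vareps}^*$ pointwise, hence permutes both $\Delta_1$ and $\Delta_0(\mathfrak{sp}_{2n})$; therefore $(\nu+\rho,\beta)=(\lambda+\rho,w^{-1}\beta)\neq0$ for $\beta\in\Delta_1$, which is the strong typicality of $\nu$. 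For the condition $(\nu+\rho,\delta_i+\delta_j)\neq0$ ($1\le i,j\le n$) I would split into the two alternatives of the hypothesis: if $\Delta_0(\lambda+\rho)=\emptyset$ then $w^{-1}(\delta_i+\delta_j)$ is again a root of $\mathfrak{sp}_{2n}$, so $(\nu+\rho,\delta_i+\delta_j)=(\lambda+\rho,w^{-1}(\delta_i+\delta_j))\neq0$; if instead $(\lambda+\rho,2\delta_k)\in\mathbb{Z}\setminus\{0\}$ for every $k$, then $w^{-1}(2\delta_k)=\pm2\delta_{k'}$ shows that $m_k:=(\nu+\rho,2\delta_k)$ is a nonzero integer for every $k$ (which settles $i=j$), and membership of $\nu+\rho$ in $\mathcal{C}^+$ applied to the positive $B_n$-roots $\delta_k$ forces all the $m_k$ to have a common sign, whence $2(\nu+\rho,\delta_i+\delta_j)=m_i+m_j\neq0$.

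For the second step, \Thm{thmmate} applied to $\nu$ and to the element $z$, together with $z.\nu=\lambda$, yields
$$Re^{\rho}\,\ch L(\lambda)=R_{\fg_{ni}}\,e^{\rho_{\fg_{ni}}}\,\ch L_{\fg_{ni}}(\lambda+\rho-\rho_{\fg_{ni}}).$$
It remains to evaluate $R_{\fg_{ni}}/R$. With $\Sigma=\Sigma_{\mathfrak{o}_m}$ the positive odd roots are the $\delta_j\pm\vareps_i$ ($1\le i\le s$, $1\le j\le n$) together with the $\delta_j$ ($1\le j\le n$) when $m$ is odd, so
$$R_1=\Big[\prod_{j=1}^n(1+e^{-\delta_j})\Big]^{p(m)}\prod_{i=1}^s\prod_{j=1}^n(1+e^{-\vareps_i-\delta_j})(1+e^{\vareps_i-\delta_j}).$$
Combining this with $R_0=R_{\mathfrak{o}_m}R_{\mathfrak{sp}_{2n}}$, $R_{\fg_{ni}}=R_{\mathfrak{o}_m}R_{\mathfrak{osp}_{p(m)|2n}}$ and $R_{\mathfrak{osp}_{p(m)|2n}}=R_{\mathfrak{sp}_{2n}}\big[\prod_j(1+e^{-\delta_j})\big]^{-p(m)}$ gives $R_{\fg_{ni}}/R=R_{\fg_{ni}}R_1/R_0=\prod_{i=1}^s\prod_{j=1}^n(1+e^{-\vareps_i-\delta_j})(1+e^{\vareps_i-\delta_j})$. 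Since $\rho_{\fg_{ni}}-\rho=s\xi$, I would divide the displayed identity by $Re^{\rho}$ and substitute the factorization (\ref{nuinu}) of $e^{\rho_{\fg_{ni}}-\rho}\ch L_{\fg_{ni}}(\lambda+\rho-\rho_{\fg_{ni}})$ to obtain exactly the asserted formula.

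The step I expect to be the main obstacle is the verification that the conjugate $\nu$ meets all hypotheses of \Thm{thmmate} — above all the nonvanishing $(\nu+\rho,\delta_i+\delta_j)\neq0$ in the integral alternative, where one has to extract the correct sign information from $\nu+\rho\in\mathcal{C}^+$, and the identity $W(\nu)=W(\lambda)$, which legitimises applying the theorem with $z$ and calls for a small check with the explicit $\rho=\rho_0-\frac{m}{2}\xi$ (so that $\Delta(\nu)$ and $\Delta(\lambda)$ have the same intersection with $\Delta_0(\mathfrak{sp}_{2n})$). The computation of $R_{\fg_{ni}}/R$ is routine bookkeeping with the root data of $\mathfrak{osp}(m|2n)$ relative to $\Sigma_{\mathfrak{o}_m}$.
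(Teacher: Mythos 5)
Your proposal is correct and follows essentially the same route as the paper: conjugate $\lambda$ by $w\in W(\lambda)\cap W(\mathfrak{sp}_{2n})$ so that $\nu+\rho:=w(\lambda+\rho)\in\mathcal{C}^+$, verify the hypotheses of \Thm{thmmate} in the two alternatives (in the integral case exactly as the paper does, via the common sign of the $(\nu+\rho,\delta_i^{\vee})$ forced by membership in $\mathcal{C}^+$), and then unwind the resulting identity using~(\ref{nuinu}) and the ratio $R_{\fg_{ni}}/R$. The paper's written proof records only the first step and leaves the computation of $R_{\fg_{ni}}/R$ implicit, so your explicit bookkeeping is a harmless elaboration rather than a different argument.
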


\subsubsection{}
\begin{cor}{corbounded}
 Let $\lambda$ be a strongly typical weight. Then $L(\lambda)$ is
 bounded
 if and only if $L_1:=L_{ \mathfrak{osp}_{p(m)|2n}}(\lambda_{\delta}-s\xi)$ and $L_2:=
L_{\mathfrak{o}_m}(\lambda_{\vareps})$ are bounded modules
(over $\mathfrak{osp}_{p(m)|2n}$ and $\mathfrak{o}_m$ respectively).
Moreover, the degree of $L(\lambda)$ is at most
$2^{2sn}\deg L_1\cdot \deg L_2$.
\end{cor}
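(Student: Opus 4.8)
The plan is to derive \Cor{corbounded} directly from the character formula in \Cor{corapp}. First I would observe that a strongly typical $\lambda$ automatically satisfies the hypothesis $(\lambda+\rho,\alpha)\neq 0$ for all $\alpha\in\Delta_1$. The only issue is the dichotomy in \Cor{corapp}: either $\Delta_0(\lambda+\rho)=\emptyset$ or $(\lambda+\rho,2\delta_i)\in\ZZ\setminus\{0\}$ for all $i$. To handle a general strongly typical $\lambda$, I would pass to a $W(\mathfrak{sp}_{2n})$-conjugate: using the remark in \S\ref{stabmu}, pick $w\in W(\lambda)\cap W(\mathfrak{sp}_{2n})$ with $w(\lambda+\rho)\in\mathcal{C}^+$; boundedness of $L(\lambda)$ depends only on the linkage class data in the $\delta$-directions, and $w$ fixes the $\vareps$-part, so I would reduce to the case where the $\delta$-part of $\lambda+\rho$ is dominant for $B_n$, in which case either all $(\lambda+\rho,2\delta_i)$ are zero on a tail (giving empty $\Delta_0$ after further care) or they are positive integers. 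Alternatively, and more cleanly, I would simply invoke \Thm{thmbound}: $L(\lambda)$ is bounded iff $L_{\mathfrak{o}_m}((\lambda^{\mathfrak{o}_m})_{\mathfrak{o}_m})$ and $L_{\mathfrak{osp}_{p(m)|2n}}((\lambda^{\ft'})_{\ft'})$ are both bounded, and then identify these with $L_2$ and $L_1$ via the $\rho$-shift bookkeeping of \S\ref{ospm2nnot} (noting $\rho_{\mathfrak{o}_m}=(\rho)_{\vareps}$ for the base $\Sigma_{\mathfrak{o}_m}$, while the $\delta$-component of $\rho_{\fg_{ni}}$ is $s\xi$, producing the shift $\lambda_\delta - s\xi$).

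Granting the character formula of \Cor{corapp} (which applies after the reduction above), the ``if and only if'' is then a statement about the boundedness of a product of formal characters. The key point is that
\[
\ch L(\lambda)=e^{s\xi}\prod_{i=1}^s\prod_{j=1}^n (1+e^{-\vareps_i-\delta_j})(1+e^{\vareps_i-\delta_j})\cdot \ch L_1\cdot \ch L_2,
\]
so the weight multiplicities of $L(\lambda)$ are obtained by convolving the weight multiplicities of $L_1$, of $L_2$, and of the finite-dimensional ``Clifford-type'' factor $F:=\prod_{i,j}(1+e^{-\vareps_i-\delta_j})(1+e^{\vareps_i-\delta_j})$, which is the character of $\Lambda(\fg_1)$ restricted appropriately — a finite sum of $2^{2sn}$ monomials each with multiplicity one. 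I would argue: if $L_1,L_2$ are bounded with degrees $\deg L_1,\deg L_2$, then for any $\nu\in\fh^*$,
\[
\dim L(\lambda)_\nu=\sum_{\mu}\dim(L_1)_{\mu_\delta-s\xi+\tau_\delta}\cdot\dim(L_2)_{\mu_\vareps+\tau_\vareps},
\]
where the sum is over the $2^{2sn}$ monomials $e^{\tau}$ of $F$ and $\mu=\nu-\tau$; since $\fh^*=\fh_\vareps^*\oplus\fh_\delta^*$ the $\vareps$- and $\delta$-parts decouple, each summand is bounded by $\deg L_1\cdot\deg L_2$, and there are $2^{2sn}$ summands, giving $\dim L(\lambda)_\nu\le 2^{2sn}\deg L_1\deg L_2$. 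Conversely, if $L(\lambda)$ is bounded then so are $L_1$ and $L_2$: this follows from \Lem{lemPit}(ii) applied to the subalgebras $\mathfrak{o}_m$ and $\mathfrak{osp}_{p(m)|2n}$ (or their images) sitting inside $\fg$ with compatible triangular decompositions — the cyclic submodule generated by the highest weight vector is $L_1$ resp.\ $L_2$ up to the standard $\rho$-shift, by \Lem{lemPit}(i), hence bounded.

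The main obstacle I anticipate is the ``if'' direction's reliance on \Cor{corapp}, whose hypotheses are not met by every strongly typical $\lambda$ on the nose; so the technical heart is the reduction showing that boundedness is invariant under the relevant $W(\mathfrak{sp}_{2n})$-twist and the $\vareps$-translations, so that one may assume $\lambda+\rho\in\mathcal{C}^+$ and thereby land in the range where \Cor{corapp} applies. A secondary point requiring care is the precise identification of the $\rho$-shifts: one must check that the highest weight of $L$ with respect to $\Sigma_{\mathfrak{o}_m}$, restricted to $\mathfrak{osp}_{p(m)|2n}$, is exactly $\lambda_\delta-s\xi$ in the normalization of \S\ref{boundsp} (equivalently that $(\rho_{\fg_{ni}})_\delta - (\rho_{\mathfrak{osp}_{p(m)|2n}})_\delta = s\xi$ cancels correctly), and similarly that the $\mathfrak{o}_m$-part is $\lambda_\vareps$ with no residual shift because $\Pi(\mathfrak{o}_m)\subset\Sigma$ forces $\rho_{\Sigma_{\mathfrak{o}_m}}|_{\fh_\vareps}=\rho_{\mathfrak{o}_m}$. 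Once these bookkeeping identifications are in place, the degree bound $2^{2sn}\deg L_1\deg L_2$ drops out of the convolution estimate above, and the equivalence is complete.
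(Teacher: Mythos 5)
Your ``only if'' direction (base change to $\Sigma_{\mathfrak{osp}_{p(m)|2n}}$, strong typicality giving $\lambda'+\rho'=\lambda+\rho$, then \Lem{lemPit}) and your convolution estimate for the degree bound are both correct and essentially what the paper does; the paper leaves the convolution implicit, so spelling it out is fine. The genuine gap is in how you propose to bring \Cor{corapp} to bear. You assert that ``boundedness of $L(\lambda)$ depends only on the linkage class data in the $\delta$-directions'' and propose to replace $\lambda$ by a $W(\mathfrak{sp}_{2n})$-conjugate with dominant $\delta$-part. This is false as stated: $L(\lambda)$ and $L(w.\lambda)$ are different modules and boundedness is not a linkage-class invariant (a finite-dimensional module and the antidominant simple module in the same block already illustrate this). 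Moreover, even if such a reduction were available, \Cor{corapp} applied to $w.\lambda$ would compute $\ch L(w.\lambda)$, not $\ch L(\lambda)$, so the degree bound for the module you actually care about would not follow. The dominant-conjugate trick is already built into the proof of \Cor{corapp}: \Thm{thmmate} gives the character of $L(z.\nu)$ for \emph{every} $z\in W(\nu)\cap W(\mathfrak{sp}_{2n})$, which is exactly how a non-dominant $\lambda$ is handled there; no further twist is needed at the level of the corollary.

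What closes the gap, and what the paper actually does, is to observe that the hypotheses of \Cor{corapp} are satisfied by $\lambda$ itself once you assume $L_1$ is bounded: by the classification in \S\ref{boundsp}, boundedness of $L_1=L_{\mathfrak{osp}_{p(m)|2n}}(\lambda_{\delta}-s\xi)$ forces $(\lambda+\rho,\delta_i)\in\tfrac12\mathbb{Z}$ for all $i$, and together with strong typicality this lands you in the second branch of the dichotomy, $(\lambda+\rho,2\delta_i)\in\mathbb{Z}\setminus\{0\}$ (for odd $m$ the nonvanishing comes from $\delta_i\in\Delta_1$; for even $m$ from the half-integrality and positivity conditions in \S\ref{boundsp}). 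No $W$-twist or $\vareps$-translation argument is required. Your alternative route to the bare equivalence via \Thm{thmbound} is sound and gives a clean second proof of the ``if and only if,'' but it does not produce the factor $2^{2sn}\deg L_1\cdot\deg L_2$, so the corrected use of \Cor{corapp} is still needed for the degree bound.
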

\subsection{Proofs of Corollaries}
\subsubsection{Proof of ~\Cor{corapp}}
For~\Cor{corapp} note that take $w\in W(\lambda)$ such that
$w(\lambda+\rho)\in\mathcal{C}^+$.
It is enough to verify that $\nu:=w.\lambda$ satisfies the assumptions of~\Thm{thmmate}. One has $\nu+\rho=w(\lambda+\rho)$.
Since $\lambda$ is strongly typicial, $\nu$ is strongly typical.

If $\Delta_0(\lambda+\rho)=\emptyset$, then $\Delta_0(\nu+\rho)=\emptyset$,
so $\nu$ satisfies the assumptions of~\Thm{thmmate}.
 Assume that
for each $i=1,\ldots,n$ we have
$(\lambda+\rho,2\delta_i)\in\mathbb{Z}\setminus\{0\}$. Since
 $w\delta_i=\pm \delta_j$ we have
$$(\nu+\rho,\delta_i^{\vee})=(w(\lambda+\rho),\delta_i^{\vee})
\in\mathbb{Z}\setminus\{0\}.$$
Since $\nu+\rho\in\mathcal{C}^+$, this gives
$(\nu+\rho,\delta_i^{\vee})\in\mathbb{Z}_{>0}$, so $\nu$
satisfies the assumptions of~\Thm{thmmate}.\qed

\subsubsection{Proof of~\Cor{corbounded}}
Let $L(\lambda)$ be bounded. Then
$L_2=L_{\mathfrak{o}_m}(\lambda_{\vareps})$ is bounded.
Take $\Sigma'$ which contains the set of simple roots for
$\mathfrak{osp}_{p(m)|2n}$ and denote by $\rho'$ the corresponding
Weyl vector. Then $\rho'_{\delta}=(\rho_{\fg_{ni}})_{\delta}$.
Since $L(\lambda)=L(\Sigma',\lambda')$ is bounded,
$L_{ \mathfrak{osp}_{p(m)|2n}}(\lambda'_{\delta})$ is bounded.
Since $\lambda$ is strongly typical, one
has $\lambda'+\rho'=\lambda+\rho$,
so
$$\lambda'_{\delta}=(\lambda+\rho-\rho_{\fg_{ni}})_{\delta}.$$
Thus $L_1=L_{ \mathfrak{osp}_{p(m)|2n}}(\lambda_{\delta}+\rho-\rho_{\fg_{ni}})$ is bounded.

Now let $\lambda$ be a strongly typical weight such that
 $L_1,L_2$ are bounded modules.
Since $L_1$ is bounded, the description of the simple bonded highest weight modules
in~\S\ref{boundsp} gives
$(\lambda+\rho,\delta_i)\in\frac{1}{2}\mathbb{Z}$ for $i=1,\ldots,n$.
From~\Cor{corapp} we conclude that $L(\lambda)$ is bounded and has degree
at most $2^{2sn}\deg L_1\cdot \deg L_2$.
\qed

\subsection{Central characters}
The rest of the section is devoted to the proof of~\Thm{thmmate}.

For a weight $\lambda\in\fh^*$ we define the $\fg$- and
$\fg_0$-central characters by
$$\chi_{\lambda}:
\cZ(\fg)\to\mathbb{C}\ \text{ s.t } z|_{L(\lambda)}=\chi_{\lambda}(z)Id,\ \ \
\chi^0_{\lambda}:
\cZ(\fg_0)\to\mathbb{C}\ \ \text{ s.t. }z |_{L_{\fg_0}(\lambda)}=\chi^0_{\lambda}(z)Id.$$

We next recall the notion ``perfect mate'' which was introduced in Section 8 of \cite{G1}.
A maximal ideal $\chi^0$ in $\cZ(\fg_0)$ is called  a {\em perfect mate} for
a maximal ideal  $\chi$ in $\cZ(\fg)$ if the following conditions are satisfied.

(i) For any Verma $\fg$-module annihilated by $\chi$, its $\fg_0$-submodule annihilated by a power
of $\chi^0$ is a Verma $\fg_0$-module.

(ii) Any $\fg$-module annihilated by $\chi$ has a non-zero vector annihilated by $\chi^0$.

If $\chi^0$ is a perfect mate for $\chi$, then Theorem 1.3.1 in~\cite{G2} establishes
an equivalence of the corresponding categories of $\fg$- and $\fg_0$-modules.

\subsubsection{}
\begin{lem}{propmate}
  Let $\nu\in\fh^*$ satisfies the assumptions of~\Thm{thmmate}.
   Then:

  (i) for each $j\in\mathbb{Z}_{>0}$ one has $\nu+\rho+\frac{j}{2}\xi\in\mathcal{C}^+$
and   $\Delta_0(\nu+\rho+\frac{j}{2}\xi)=\Delta_0(\nu+\rho)$;

 (ii) $\chi^0_{\nu}$ is a perfect mate for $\chi_{\nu}$.
  \end{lem}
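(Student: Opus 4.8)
My plan is to prove (i) first, since the stability-of-$\Delta_0$ statement is exactly what controls the integral Weyl group data needed to verify the two defining conditions (i), (ii) of a perfect mate in (ii). For (i), recall that $\xi=\sum_{i=1}^n\delta_i$ is fixed by the Weyl group $W(\mathfrak{sp}_{2n})$ (it is $W(B_n)$-invariant, being the sum of the fundamental coweights up to scaling), and that $\xi$ is orthogonal to $\fh_{\vareps}^*$; hence adding $\tfrac{j}{2}\xi$ only changes pairings with roots of the form $\delta_i\pm\delta_j$ and with $\delta_i$ (in the odd case). Concretely, for $\alpha=\delta_i-\delta_j$ one has $(\xi,\alpha^\vee)=0$, so $(\nu+\rho+\tfrac{j}{2}\xi,\alpha^\vee)=(\nu+\rho,\alpha^\vee)$; for $\alpha=2\delta_i$ one has $(\xi,\alpha^\vee)=1$, so that pairing increases by $\tfrac{j}{2}\geq 0$; and for $\alpha=\delta_i$ in the $B_n$-system it increases by $\tfrac{j}{2}\geq 0$ as well. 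Since $\nu+\rho\in\mathcal{C}^+$ means all pairings with $\Delta^+(B_n)$ lie outside $\mathbb{Z}_{<0}$, adding a nonnegative quantity to the "long root" pairings keeps us in $\mathcal{C}^+$; and by the hypothesis $(\nu+\rho,\delta_i+\delta_j)\neq 0$ together with $\nu+\rho\in\mathcal{C}^+$, no pairing with a root $\delta_i\pm\delta_j$ that was nonzero can become zero, while the pairings with $\delta_i-\delta_j$ are literally unchanged. This gives $\Delta_0(\nu+\rho+\tfrac{j}{2}\xi)=\Delta_0(\nu+\rho)$; the only subtlety is to check that roots of the form $2\delta_i$ or $\delta_i$ cannot lie in $\Delta_0(\nu+\rho)$ in the first place — but $\Delta_0(\mu)\subset\Delta_0(\mathfrak{sp}_{2n})$ consists of roots $\delta_i\pm\delta_j$ by definition in \S\ref{stabmu}, and $2\delta_i$ has even-integer pairing controlled by $\delta_i$, so if $(\nu+\rho,2\delta_i)=0$ then $(\nu+\rho,\delta_i)=0$, contradicting $\nu+\rho\in\mathcal C^+$ unless we allow it; I will need to check the hypotheses rule this out (the condition $(\nu+\rho,\delta_i+\delta_j)\neq0$ with $i=j$ reading as $2\delta_i$ handles exactly this).

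For (ii), I must verify the two conditions in the definition of perfect mate for the pair $(\chi^0_\nu,\chi_\nu)$. I would invoke the framework of \cite{G1}, Section 8: there, a maximal ideal $\chi^0$ of $\cZ(\fg_0)$ is identified as a perfect mate of $\chi_\nu$ precisely when $\nu$ is strongly typical and a certain combinatorial/geometric genericity condition on $\nu+\rho$ relative to the $\fg_0$-root data holds — and that condition is phrased in terms of dominance/integrality properties of $\nu+\rho$ shifted by multiples of $\tfrac12\xi$ (coming from the $\fg_0$-weights appearing in $\Lambda\fg_1$, which are of the form $\pm\vareps_i\pm\delta_j$, and whose "$\delta$-shifts" of $\nu+\rho$ are exactly the $\nu+\rho+\tfrac{j}{2}\xi$ and reflections thereof). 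This is why part (i) is the technical heart: it says the integral Weyl group $\Stab$-data is uniform along the whole $\tfrac12\xi$-ladder, which is exactly the input needed to (a) show that the $\fg_0$-submodule of a Verma $\fg$-module killed by a power of $\chi^0_\nu$ is again Verma (condition (i) of perfect mate), and (b) exhibit a $\chi^0_\nu$-eigenvector in every $\chi_\nu$-module (condition (ii), via the strongly-typical $\Lambda\fg_1$-filtration argument of \cite{G1}).

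The concrete steps: (1) establish the $W(\mathfrak{sp}_{2n})$-invariance and $\fh_\vareps^*$-orthogonality of $\xi$; (2) compute $(\xi,\alpha^\vee)$ for $\alpha\in\Delta^+(B_n)$ and deduce $\nu+\rho+\tfrac{j}{2}\xi\in\mathcal C^+$ by monotonicity; (3) deduce $\Delta_0(\nu+\rho+\tfrac{j}{2}\xi)=\Delta_0(\nu+\rho)$ using the hypothesis $(\nu+\rho,\delta_i+\delta_j)\neq0$ to prevent a previously-nonzero pairing from vanishing, and the definition of $\Delta_0(\mu)$ (no roots $2\delta_i$ or $\delta_i$ occur there); (4) recall from \cite{G1} the explicit criterion for $\chi^0$ to be a perfect mate for a strongly typical $\chi_\nu$, which reduces to exactly the dominance/integrality of $\nu+\rho$ along the $\tfrac12\xi$-ladder; (5) feed (2)–(3) into (4) to conclude. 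The main obstacle I anticipate is step (4): correctly extracting from \cite{G1} the precise form of the genericity criterion and matching its hypotheses term-by-term with "$\nu$ strongly typical, $\nu+\rho\in\mathcal C^+$, $(\nu+\rho,\delta_i+\delta_j)\neq 0$", so that parts (i) and (ii) of the perfect-mate definition both follow formally — in particular being careful that condition (ii) (existence of a $\chi^0$-eigenvector in *every* $\chi$-module, not just Verma modules) is genuinely supplied by the strong typicality plus the $\tfrac12\xi$-ladder stability, with no further positivity needed.
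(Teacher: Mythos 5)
Your part (i) follows the paper's argument in substance: one computes $(\xi,\alpha^{\vee})$ for $\alpha$ ranging over $\Delta^+(B_n)$ and over $\Delta_0(\mathfrak{sp}_{2n})$, observes that the pairings with $\delta_i-\delta_j$ are unchanged while the others shift, and uses the hypothesis $(\nu+\rho,\delta_i+\delta_j)\neq 0$ for all $1\le i\le j\le n$ (the case $i=j$ covering $2\delta_i$) to force $\Delta_0(\nu+\rho)\subset\{\delta_i-\delta_j\}$. Two corrections, though. First, $(\xi,\delta_i^{\vee})=(\xi,(\delta_i+\delta_j)^{\vee})=2$, so the relevant pairings shift by the \emph{integer} $j$, not by $\tfrac{j}{2}$. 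Second, your justification ``adding a nonnegative quantity keeps us in $\mathcal{C}^+$'' is not valid as stated: $\mathcal{C}^+$ only excludes values in $\mathbb{Z}_{<0}$, and a negative non-integer plus a positive non-integer can land in $\mathbb{Z}_{<0}$. What saves the argument is precisely that the increment is a nonnegative integer, so non-integral pairings stay non-integral and integral pairings, being $\ge 0$ and (by hypothesis) $\neq 0$ when $\alpha\neq\delta_i-\delta_j$, stay positive. With these repairs (i) is correct.

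The genuine gap is in (ii). The tool you need from \cite{G1} is Lemma 8.3.4, which is concrete: $\chi^0_{\nu}$ is a perfect mate for $\chi_{\nu}$ provided (1) $\Stab_W(\nu+\rho_0)\subset\Stab_W(\nu+\rho)$, and (2) the only $\Gamma\subset\Delta_1^+$ and $w\in W$ with $w(\nu+\rho_0)=\nu+\rho_0-\sum_{\beta\in\Gamma}\beta$ is $\Gamma=\emptyset$. Condition (1) does follow from your part (i) applied with $j=m$ (since $\rho_0=\rho+\tfrac{m}{2}\xi$), together with $W=W(\mathfrak{o}_m)\times W(\mathfrak{sp}_{2n})$ and the description of stabilizers in \S\ref{stabmu}, as you anticipate. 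But condition (2) does \emph{not} ``follow formally'' from the $\tfrac12\xi$-ladder stability, and your proposal supplies nothing in its place. One must project the putative relation to the $\delta$-part: writing $(\nu+\rho_0)_{\delta}=\sum b_i\delta_i$, the hypotheses give $b_i-b_j\notin\mathbb{Z}_{<0}$ and $b_i+b_j-m\notin\mathbb{Z}_{\le 0}$; since every $\beta\in\Delta_1^+$ has $\beta_{\delta}=\delta_j$ for some $j$ and there are exactly $m$ such roots for each $j$, one gets $\bigl(\sum_{\beta\in\Gamma}\beta\bigr)_{\delta}=\sum s_i\delta_i$ with $s_i\in\{0,\dots,m\}$; finally, because $W(\mathfrak{sp}_{2n})$ acts by signed permutations of the $\delta_i$, the inequality $b_i+b_j-m\notin\mathbb{Z}_{\le 0}$ rules out any sign change, the multisets $\{b_i\}$ and $\{b_i-s_i\}$ coincide, and $s_i\ge 0$ forces all $s_i=0$, i.e.\ $\Gamma=\emptyset$. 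This combinatorial step is where the hypothesis $(\nu+\rho,\delta_i+\delta_j)\neq 0$ is actually consumed for (ii); without it the lemma is not proved.
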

 \begin{proof}
 Recall that $\nu+\rho\in\cC^+$ and $\Delta_0(\nu+\rho)\subset \{\delta_i-\delta_j\}_{i,j=1}^n$.

One has $(\xi,\alpha^{\vee})=2$ for $\alpha=\delta_i, \delta_i+\delta_j$ and $(\xi,(\delta_i-\delta_j)^{\vee})=0$
for $1\leq i<j\leq n$.  For $j\in\mathbb{Z}_{>0}$ this gives that $\nu+\rho+\frac{j}{2}\xi\in\mathcal{C}^+$ and
$\Delta_0(\nu+\rho+\frac{j}{2}\xi)\subset \{\delta_i-\delta_j\}_{i,j=1}^n$, which implies  (i).
For (ii) we use~\cite{G1}, Lemma 8.3.4, which asserts that
$\chi^0_{\nu}$ is a perfect mate for $\chi_{\nu}$
if the following conditions hold:

  (1)  $\Stab_W(\nu+\rho_0)\subset \Stab_W(\nu+\rho)$;

  (2) if $\Gamma\subset\Delta_1^+$ and $w\in W$ are such that
  \begin{equation}\label{wGamma}
w(\nu+\rho_0)=\nu+\rho_0-\sum_{\beta\in\Gamma} \beta,
\end{equation}
then  $\Gamma=\emptyset$.

One has  $W=W(\mathfrak{o}_m)\times
  W(\mathfrak{sp}_{2n})$, so for each $\mu\in\fh^*$
  $$\Stab_W(\mu)=\Stab_{W(\mathfrak{o}_m)}\mu\times
  \Stab_{W(\mathfrak{sp}_{2n})}\mu=
  \Stab_{W(\mathfrak{o}_m)}\mu_{\vareps}\times
  \Stab_{W(\mathfrak{sp}_{2n})}\mu_{\delta}.
  $$
  One has $(\nu+\rho)_{\vareps}=(\nu+\rho_0)_{\vareps}$, so
$$\Stab_{W(\mathfrak{o}_m)}(\nu+\rho)_{\vareps}  =\Stab_{W(\mathfrak{o}_m)}(\nu+\rho_0)_{\vareps}.$$
By~\S \ref{stabmu}, the group $\Stab_{W(\mathfrak{sp}_{2n})}\mu_{\delta}$ is
generated by $r_{\alpha},\alpha\in\Delta_0(\mu_{\delta})$, so (i) gives
$$\Stab_{W(\mathfrak{sp}_{2n})}(\nu+\rho_0)=
\Stab_{W(\mathfrak{sp}_{2n})}(\nu+\rho)$$
and condition (1) follows. Now let us verify condition (2). Take $w\in W$ and $\Gamma\subset \Delta_1^+$
such that~(\ref{wGamma}) holds. Write $w=w_1w_2$ with
$w_1\in W(\mathfrak{o}_m)$, $w_2\in W(\mathfrak{sp}_{2n})$, and
set
$$\gamma:=\sum_{\beta\in\Gamma}\beta,\ \mu:=(\nu+\rho_0)_{\delta}.$$
Then $\mu-w_2\mu=\gamma_{\delta}$ and $\gamma_{\delta}=0$
implies $\Gamma=\emptyset$. Thus it is enough to verify that
$\gamma_{\delta}=0$.

Write $\mu=:\sum_{i=1}^n b_i\delta_i$ and $w_2\mu=:\sum_{i=1}^n b'_i\delta_i$.
The assumptions on $\nu$ give
\begin{equation}\label{bibi}
b_i-b_j\not\in\mathbb{Z}_{<0}, \ b_i+b_j-m\not\in\mathbb{Z}_{\leq 0}\
\text{ for }\
1\leq i<j\leq n.\end{equation}

Note that $\gamma_{\delta}=\sum_{i=1}^n s_i\delta_i$, where
$s_i\in\{0,1,\ldots,m\}$ for each $i$, so
$$b_i-b_i'\in\{0,1,\ldots,m\}\ \text{ for }\ i=1,\ldots,n.$$
Since $W(\mathfrak{sp}_{2n})$ acts on $\{\delta_i\}_{i=1}^n$ by signed permutations,
one has $\{|b_i|\}_{i=1}^n=\{|b'_i|\}_{i=1}^n$ as multisets.
If for some $i,j$ one has
$b'_j=-b_i$, then $b_j+b_i=b_j-b'_j\in \{0,1,\ldots,m\}$,
a contradiction to~(\ref{bibi}). Therefore
$\{b_i\}_{i=1}^n=\{b'_i\}_{i=1}^n$ as multisets. Since $b_i\geq b'_i$
for each $i$, one has $b_i=b_i'$, that is $\gamma_{\delta}=0$ as required.
\end{proof}

\subsection{Proof of~\Thm{thmmate}}\label{konec}
Recall that $y.\mu:=y(\mu+\rho)-\rho$ for $w\in W,\mu\in\fh^*$;
we consider other shifted actions
 of the Weyl group $W$ on $\fh^*$ given by
$$y_\circ \mu:=y(\mu+\rho_0)-\rho_0,\ \  y_{\bullet}\mu:=y(\mu+\rho_{\fg_{ni}})-\rho_{\fg_{ni}}$$
and note that $y.\mu=y_\circ \mu=y_{\bullet}\mu$ if $y\in W_{\mathfrak{o}_m}$.

By~\Lem{propmate},
the central character of $\fg_0$-module $M_{\fg_0}(\nu)$
is a perfect mate for the central character of $\fg$-module
$M(\nu)$. This gives rise to
equivalence of categories, see~\cite{G2}, Theorem 1.3.1. The image of
$L(z.\nu)$ under this equivalence is $L_{\fg_0}(z {}_{\circ}\nu)$,
see~\cite{FGG}, \S 8.2.1. Therefore
\begin{equation}\label{have}
R_0e^{\rho_0} \ch {L}_{\fg_0}(z_{\circ}\nu)=
\sum_{y\in W} a^z_y e^{y(\nu+\rho_0)},\ \
R e^{\rho} \ch L(z.\nu)=\sum_{y\in W} a^z_y e^{y(\nu+\rho)}\end{equation}
for certain integers $a^z_y$, which are given in terms of Kazhdan-Lusztig polynomials for the Coxeter group $W(\nu+\rho_0)$ (note that $a^z_y$ are not uniquely defined
if $\Delta_0(\nu)\not=\emptyset$).

Set
$$\mu:=\nu+\rho-\rho_{\fg_{ni}}=\nu-s\xi.$$
Our goal is to show that
\begin{equation}\label{show}
R_{\fg_{ni}}e^{\rho_{\fg_{ni}}}
\ch {L}_{\fg_{ni}}(z_{\bullet}\mu)=\sum_{y\in W} a^z_y e^{y(\mu+\rho_{\fg_{ni}})}.\end{equation}

For each $y\in W$ one has $(y_{\circ}\mu)_{\delta}=y_{\circ}(\mu_{\delta})$, and
the analogous formula holds for $y_{\bullet}$.
 Since $z\in  W(\mathfrak{sp}_{2n})$,
one has
$
(z_{\circ}\nu)_{\vareps}=\nu_{\vareps}=\mu_{\vareps}=(z_{\bullet}\mu)_{\vareps}.
$
Hence we have the following identities:
\begin{equation}\label{numu}
\begin{array}{l}
\ch L_{\fg_0}(z_{\circ}\nu)=\ch L_{\mathfrak{o}_m}(\nu_{\vareps})\cdot \ch
L_{\mathfrak{sp}_{2n}}(z_{\circ}\nu_{\delta}),\\
\ch L_{\fg_{ni}}(z_{\bullet}\mu)
=\ch L_{\mathfrak{o}_m}(\nu_{\vareps})\cdot \ch
L_{\mathfrak{osp}_{p(m)|2n}}(z_{\bullet}\mu_{\delta}),\\
R_{\mathfrak{o}_m}e^{\rho_{\mathfrak{o}_m}}\cdot \ch
L_{\mathfrak{o}_m}(\nu_{\vareps})=\displaystyle
\sum_{x\in W_{\mathfrak{o}_m}}
b_x e^{x(\nu+\rho_0)_{\vareps}}=
\sum_{x\in W_{\mathfrak{o}_m}}
b_x e^{x(\nu+\rho)_{\vareps}},\\
R_{\mathfrak{sp}_{2n}}e^{\rho_{\mathfrak{sp}_{2n}}}\cdot
\ch L_{\mathfrak{sp}_{2n}}(z {}_{\circ}\nu_{\delta})=\displaystyle
\sum_{u\in W_{\mathfrak{sp}_{2n}}}c^z_u e^{u(\nu_{\delta}+\rho_{\mathfrak{sp}_{2n}})}
=\displaystyle
\sum_{u\in W_{\mathfrak{sp}_{2n}}}c^z_u e^{u(\nu+\rho_0)_{\delta}},\\
R_{\mathfrak{osp}_{p(m)|2n}}e^{\rho_{\mathfrak{osp}_{p(m)|2n}}}\cdot
L_{\mathfrak{osp}_{p(m)|2n}}(z_{\bullet}\mu_{\delta})=\displaystyle
\sum_{u\in W_{\mathfrak{sp}_{2n}}}d^z_u e^{u(\mu_{\delta}+\rho_{\mathfrak{osp}_{p(m)|2n}})},
\end{array}\end{equation}
where $b_x, c^{z}_u, d^z_u$ are certain integers. Therefore for each
$x\in W_{\mathfrak{o}_m}, u\in W_{\mathfrak{sp}_{2n}}$ we have
$$a^z_{xu}=b_x c^{z}_u.$$
Also, one has that
$\mu_{\delta}+\rho_{\mathfrak{osp}_{p(m)|2n}}=
(\mu+\rho_{\fg_{ni}})_{\delta}=(\nu+\rho)_{\delta}$,
so the last formula of~(\ref{numu}) can be rewritten as
$$R_{\mathfrak{osp}_{p(m)|2n}}e^{\rho_{\mathfrak{osp}_{p(m)|2n}}}\cdot
L_{\mathfrak{osp}_{p(m)|2n}}(z_{\bullet}\mu_{\delta})=\displaystyle
\sum_{u\in W_{\mathfrak{sp}_{2n}}}d^z_u e^{u(\nu+\rho)_{\delta}}.$$
Therefore for each
$x\in W_{\mathfrak{o}_m}, u\in W_{\mathfrak{sp}_{2n}}$ we have
$$R_{\fg_{ni}}e^{\rho_{\fg_{ni}}}
\ch {L}_{\fg_{ni}}(z_{\bullet}\mu)=\displaystyle\sum_{x\in W_{\mathfrak{o}_m}, u\in W_{\mathfrak{sp}_{2n}}} b_xd^z_u e^{xu(\nu+\rho)}.$$
Now ~(\ref{show}) reduces to the fact that we can choose $c^z_u,d^z_u$ in such a way that  $c^z_u=d^z_u$ for
 each $u\in W_{\mathfrak{sp}_{2n}}$
(note that $c^z_u,d^z_u$ are not uniquely defined if $\Delta_0(\nu+\rho)$ or
$\Delta_0(\nu+\rho_0)$ is not empty).

Consider the case when $m$ is odd. Combining~(\ref{have}) for $m=1$
and the weight  $\mu_{\delta}+\rho_{\mathfrak{osp}_{1|2n}}=(\nu+\rho)_{\delta}$
with the last formula of~(\ref{numu}) we get
\begin{equation}\label{have1}
R_{\mathfrak{sp}_{2n}}e^{\rho_{\mathfrak{sp}_{2n}}}\cdot
\ch L_{\mathfrak{sp}_{2n}}(z {}_{\circ}\mu_{\delta})=\displaystyle
\sum_{u\in W_{\mathfrak{sp}_{2n}}}d^z_u e^{u(\mu_{\delta}+\rho_{\mathfrak{sp}_{2n}})}
\, .
\end{equation}
Note that for even $m$ the formula~(\ref{have1}) coincides with
 the last formula of~(\ref{numu}). Hence~(\ref{have1}) holds for all $m$.
 Compare~(\ref{have1}) and the forth formula of~(\ref{numu}).
In the light of~\cite{KT}, Proposition 3.9, the required formulae
  $c^z_y=d^z_y$
follow from the following conditions:

(a) $\nu_{\delta}-\mu_{\delta}$ lies in the weight lattice of $\mathfrak{sp}_{2n}$;

(b) $(\nu_{\delta}+\rho_{\mathfrak{sp}_{2n}})(\alpha^{\vee}), (\mu_{\delta}+\rho_{\mathfrak{sp}_{2n}})(\alpha^{\vee})\not\in\mathbb{Z}_{<0}$
for each $\alpha\in\Delta(\mathfrak{sp}_{2n})$;

(c) $\Delta_0(\nu_{\delta}+\rho_{\mathfrak{sp}_{2n}})=
\Delta_0(\mu_{\delta}+\rho_{\mathfrak{sp}_{2n}})$.

Condition (a)  follows from
 $\nu_{\delta}-\mu_{\delta}=s\xi$.  For (b), (c) notice that
 $$\nu_{\delta}+\rho_{\mathfrak{sp}_{2n}}=(\nu+\rho)_{\delta}+\frac{m}{2}\xi;
 \ \ \mu_{\delta}+\rho_{\mathfrak{sp}_{2n}}=(\nu+\rho)_{\delta}+\frac{p(m)}{2}\xi.
$$
Using~\Lem{propmate}(i) we obtain (c) and
$\nu_{\delta}+\rho_{\mathfrak{sp}_{2n}},\mu_{\delta}+\rho_{\mathfrak{sp}_{2n}}\in\cC^+$;
one readily sees that these inclusions imply (b).
 This completes the proof.
\qed

\section{The cases $\mathfrak{osp}(m|2n)$ for $m=3,4$ or $n=1$}
\label{deg}
\Cor{corbounded} gives an upper bound for the degree of a simple strongly typical
highest weight bounded module. In this section we deduce an upper bound
on the degree of a simple
highest weight bounded module for the cases $m=3,4$ or $n=1$.

We retain notation of~\S \ref{ospm2nnot}.
Recall that
$\mathfrak{osp}_{p(m)|2n}$ stands for $\mathfrak{sp}_{2n}$
if $m$ is even and for $\mathfrak{osp}(1|2n)$ if $m$ is odd.

\subsection{}
\begin{thm}{thmm2}
  Let $\fg=\mathfrak{osp}(m|2)$ with the base
 $\Sigma_{\mathfrak{o}_m}$.
The module $L(\lambda)$ is bounded if and only if
the $\mathfrak{o}_m$-module
$L_{\mathfrak{o}_m}(\lambda_{\mathfrak{o}_m})$
is bounded.
The degree of $L(\lambda)$ is at most  $2^{2m}
\deg L_{\mathfrak{o}_m}(\lambda_{\mathfrak{o}_m})$.
\end{thm}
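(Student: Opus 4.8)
The plan is to treat the $n=1$ case as a direct specialization of the reduction machinery developed for general $\mathfrak{osp}(m|2n)$, using that the symplectic factor $\mathfrak{sp}_2$ is too small to impose any constraint. Concretely, I will follow the structure of the proof of~\Thm{thmbound} (or of~\Cor{cor1}(iii)), specialized to $n=1$, and then track degrees through the tensor-product surjection to get the explicit bound $2^{2m}\deg L_{\mathfrak{o}_m}(\lambda_{\mathfrak{o}_m})$.

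\textbf{Step 1: the ``only if'' direction.} Suppose $L(\lambda)$ is bounded. Then, since $\mathfrak{o}_m$ is a component of $\fg_{ni}$ contained in the chosen base $\Sigma=\Sigma_{\mathfrak{o}_m}$, \Lem{lemPit}(ii) (applied to the cyclic $\mathfrak{o}_m$-submodule of $L(\lambda)$ generated by a highest weight vector, which by \Lem{lemPit}(i) is $L_{\mathfrak{o}_m}(\lambda_{\mathfrak{o}_m})$) immediately shows $L_{\mathfrak{o}_m}(\lambda_{\mathfrak{o}_m})$ is bounded. This direction is essentially free.

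\textbf{Step 2: the ``if'' direction.} Assume $L_{\mathfrak{o}_m}(\lambda_{\mathfrak{o}_m})$ is bounded. Let $v_\lambda$ be the highest weight vector of $L(\lambda)$ and set $N:=\cU(\fg_0)v_\lambda$, $E:=\cU(\mathfrak{o}_m)v_\lambda$, $E':=\cU(\mathfrak{sp}_2)v_\lambda$. By \Lem{lemrho1}(i) it suffices to show $N$ is a bounded $\fg_0$-module. As in the proof of~\Thm{thmbound}, since $\fg_0=\mathfrak{o}_m\times\mathfrak{sp}_2$ (modulo center) the natural multiplication map $E\otimes E'\twoheadrightarrow N$ is a surjection of $\mathfrak{o}_m\times\mathfrak{sp}_2$-modules, so boundedness of $N$ follows from boundedness of $E$ and $E'$. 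Now $E\cong L_{\mathfrak{o}_m}(\lambda_{\mathfrak{o}_m})$ is bounded by hypothesis; and $E'$ is a module in $\CO(\mathfrak{sp}_2)$, hence bounded because every object of $\CO(\mathfrak{sp}_2)=\CO(\mathfrak{sl}_2)$ has all weight multiplicities $\le 1$ (as recalled in~\S\ref{boundsp}). Thus $N$ is bounded, and by \Lem{lemrho1}(i), $L(\lambda)$ is bounded.

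\textbf{Step 3: the degree bound.} I will run the degree bookkeeping through the chain $E\otimes E'\twoheadrightarrow N$, then $\Ind^{\fg}_{\fg_0}N\twoheadrightarrow L(\lambda)$ (from the argument of \Lem{lemrho1}). A weight space of $E\otimes E'$ is a single tensor product $E_{\nu_1}\otimes E'_{\nu_2}$, so $\deg(E\otimes E')\le \deg E\cdot\deg E'\le \deg L_{\mathfrak{o}_m}(\lambda_{\mathfrak{o}_m})\cdot 1$ since $\dim E'_{\nu_2}\le 1$; hence $\deg N\le\deg L_{\mathfrak{o}_m}(\lambda_{\mathfrak{o}_m})$. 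Passing to $\fg$: $\Ind^{\fg}_{\fg_0}N=N\otimes\Lambda\fg_1$ as $\fg_0$-modules and $\dim\Lambda\fg_1=2^{\dim\fg_1}$; for $\mathfrak{osp}(m|2)$ one has $\dim\fg_1=2m$, so each weight space of $\Ind^{\fg}_{\fg_0}N$ has dimension at most $2^{2m}\deg N$, and since $L(\lambda)$ is a quotient of this, $\deg L(\lambda)\le 2^{2m}\deg L_{\mathfrak{o}_m}(\lambda_{\mathfrak{o}_m})$. (This matches the shape of the bound in~\Cor{corbounded}, specialized to $s\le m$, $n=1$.) The one point needing a little care — and the only real obstacle — is the book-keeping in Step 3: the weight-space dimension count for the induced module is only subadditive over the filtration by $\Lambda^k\fg_1$, so one must phrase it as ``$\dim(\Ind N)_\mu=\sum_k\dim(N\otimes\Lambda^k\fg_1)_\mu\le 2^{\dim\fg_1}\deg N$'' rather than hoping for equality; this is routine but should be written out explicitly to justify the exponent $2m$ rather than, say, $m$.
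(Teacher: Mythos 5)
Your argument is correct, but for the part of the statement that is actually new -- the explicit constant $2^{2m}$ -- it takes a genuinely different and more elementary route than the paper. The equivalence ``$L(\lambda)$ bounded $\Leftrightarrow$ $L_{\mathfrak{o}_m}(\lambda_{\mathfrak{o}_m})$ bounded'' is, as your Steps 1--2 essentially reproduce, already contained in \Thm{thmbound} and \Cor{cor1}(iii); in the case $n=1$ the only point to note is that $E'=\cU(\mathfrak{sp}_2)v_\lambda$ is a quotient of an $\fsl_2$-Verma module and hence has all weight multiplicities $\le 1$ (this, rather than your stated reason, is the correct justification: it is not true that \emph{every} object of $\CO(\fsl_2)$ has multiplicities $\le 1$, only that every highest weight module does). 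For the degree bound the paper proceeds quite differently: it first establishes the bound for all strongly typical $\nu$ with $\nu_{\mathfrak{o}_m}=\lambda_{\mathfrak{o}_m}$ via the character formula of \Thm{thmmate} and \Cor{corbounded}, and then extends it to the whole affine subspace $\Lambda$ using Zariski density of the strongly typical locus together with semicontinuity of the rank of the Shapovalov matrix $S_\mu(\nu)$. You instead obtain the bound uniformly in $\lambda$, with no typicality or density argument, by tracking degrees through the surjections $E\otimes E'\twoheadrightarrow N=\cU(\fg_0)v_\lambda$ and $\Ind_{\fg_0}^{\fg}N=N\otimes\Lambda\fg_1\twoheadrightarrow L(\lambda)$, using that each weight space of $E\otimes E'$ is a single $E_{\nu_1}\otimes E'_{\nu_2}$ (the two root lattices meet trivially) and that $\dim\Lambda\fg_1=2^{\dim\fg_1}=2^{2m}$; your subadditivity caveat in Step 3 is exactly the right point and is handled correctly. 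What each approach buys: yours avoids all of Section~\ref{strtyp}; the paper's is the one that transports to \Thm{thm34}, where the cyclic $\mathfrak{sp}_{2n}$-submodule generated by $v_\lambda$ is no longer a quotient of a multiplicity-one Verma module and your direct bookkeeping would not control its degree, and for strongly typical weights the paper's method in fact yields the sharper constant $2^{2\lfloor m/2\rfloor}$ coming from \Cor{corbounded}.
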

\begin{proof}
Assume that $L_{\mathfrak{o}_m}(\lambda_{\mathfrak{o}_m})$
is bounded. Set
$$\Lambda:=\{\nu\in\fh^*|\ \nu_{\mathfrak{o}_m}=\lambda_{\mathfrak{o}_m}\}.$$

If $\nu\in\Lambda$ is strongly typical, then, by~\Cor{corbounded},
for each $\mu$ one has
\begin{equation}\label{ospm2}
\dim L(\nu)_{\nu-\mu}\leq 2^{2m}
\deg\, L_{\mathfrak{o}_m}(\lambda_{\mathfrak{o}_m})
\end{equation}
since any simple highest weight  $\mathfrak{osp}_{p(m)|2}$-module
has degree $1$ (note that $\mathfrak{osp}_{p(m)|2}$ is isomorphic to
$\fsl_2$ for even $m$, and to
$\mathfrak{osp}_{1|2}$ for odd $m$).

Recall that
$\dim L(\nu)_{\nu-\mu}$ is equal to the rank of the Shapovalov
matrix $S_{\mu}(\nu)$, see~\cite{Sh}. The Shapovalov matrix is a $k\times k$ matrix (where
$k=\dim \cU(\fn)_{\mu}$) with entries in $\cS(\fh)$, and such that for each $\nu\in\fh^*$
the matrix $S_{\mu}(\nu)$ is a $k\times k$ scalar matrix.
Let $\Lambda_{st}$ be the  set of strongly typical weights in $\Lambda$.
Then  $\Lambda_{st}$  is Zariski dense in $\Lambda$.
By~(\ref{ospm2}), for $\nu\in\Lambda_{st}$ the rank of $S_{\mu}(\nu)$
is at most $d:=2^{2m}\deg\, L_{\mathfrak{o}_m}(\lambda_{\mathfrak{o}_m})$.
Hence the rank of $S_{\mu}(\nu)$
is at most $d$ for each $\nu\in\Lambda$.
Thus~(\ref{ospm2}) holds for each $\nu\in\Lambda$.
This completes the proof.\end{proof}

\subsection{}
\begin{thm}{thm34}
Let $\fg=\mathfrak{osp}(m|2n)$ for $m=3$ or $m=4$ with
the base
 $$\Sigma_{\mathfrak{osp}_{p(m)|2n}}=
   \{\vareps_1-\vareps_2,\vareps_2-\delta_1,\delta_1-\delta_2,
   \ldots,a\delta_n\},$$
where $a=1 \text{ for }m=3, a=2 \text{ for }m=4$.

The module $L(\lambda)$ is bounded if and only if
the $\mathfrak{osp}_{p(m)|2n}$-module
  $L_{\mathfrak{osp}_{p(m)|2n}}(\lambda_{\mathfrak{sp}_{2n}})$
  is
bounded. The degree of $L(\lambda)$ is at most $2^{2n} \deg
L_{\mathfrak{osp}_{p(m)|2n}}(\lambda_{\mathfrak{sp}_{2n}})$.
\end{thm}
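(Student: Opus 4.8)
The plan is to prove the equivalence first, and then the degree estimate, following the scheme of the proof of \Thm{thmm2}. For the equivalence, observe that for $m=3,4$ every connected component of the root system of $\mathfrak{o}_m$ is of type $A_1$, so every component of $\fg_{ni}$ different from $\mathfrak{osp}_{p(m)|2n}$ has rank one. Hence for $n>1$ the algebra $\mathfrak{osp}_{p(m)|2n}$ is the unique component of $\fg_{ni}$ of rank greater than one and \Cor{cor1}(iii) applies with the base $\Sigma=\Sigma_{\mathfrak{osp}_{p(m)|2n}}$; for $n=1$ every component of $\fg_{ni}$ has rank one, so $L(\lambda)$ is bounded for all $\lambda$ by \Cor{cor1}(i), and likewise $L_{\mathfrak{osp}_{p(m)|2}}(\lambda_{\mathfrak{sp}_2})$ is bounded for all $\lambda$ (it is a module in $\CO$ over $\mathfrak{sl}_2$ or over $\mathfrak{osp}(1|2)$). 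In all cases the stated equivalence follows, so the content of the theorem is the degree bound; from now on assume $L_1:=L_{\mathfrak{osp}_{p(m)|2n}}(\lambda_{\mathfrak{sp}_{2n}})$ is bounded of degree $D$.

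To bound the degree I would reduce to strongly typical weights. Fix $\mu\in\mathbb{Z}_{\geq 0}\Sigma$; by \cite{Sh} one has $\dim L(\nu)_{\nu-\mu}=\rank S_\mu(\nu)$ for the Shapovalov matrix $S_\mu$, whose entries lie in $\Sh$, and $\{\nu:\rank S_\mu(\nu)\leq d\}$ is Zariski closed. So it suffices to verify $\rank S_\mu(\nu)\leq 2^{2n}D$ for a Zariski dense set of $\nu$ lying in the closed set of weights $\nu$ with $L_{\mathfrak{osp}_{p(m)|2n}}(\nu_{\mathfrak{sp}_{2n}})$ bounded of degree $\leq D$ (this set contains $\lambda$, and its closedness is again read off from the Shapovalov matrices of $\mathfrak{osp}_{p(m)|2n}$); the bound then holds on the whole set, in particular at $\lambda$. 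On the open dense locus of strongly typical weights — for which $\Delta_0(\nu+\rho)=\emptyset$ or $(\nu+\rho,2\delta_i)\in\mathbb{Z}\setminus\{0\}$ holds automatically once $L_1$ is bounded, by \S\ref{boundsp} — this will follow from \Cor{corapp}. The point needing extra care is that for $m=3$ the subspace $\{\nu:\nu_{\mathfrak{sp}_{2n}}=\lambda_{\mathfrak{sp}_{2n}}\}$ may contain no strongly typical weight, namely exactly when $(\lambda+\rho,\delta_j)=0$ for some $j$ (equivalently $L_1$ is infinite dimensional with vanishing last coordinate); these finitely many degenerate configurations are isolated in the $\fh_{\delta}^*$-direction inside the above closed set and must be handled separately, e.g. by a translation argument or by comparing Shapovalov ranks with those at a nearby typical weight.

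For a strongly typical $\nu$ the formula of \Cor{corapp} reads
\[
\ch L(\nu)=e^{s\xi}\prod_{i=1}^{s}\prod_{j=1}^{n}(1+e^{-\vareps_i-\delta_j})(1+e^{\vareps_i-\delta_j})\cdot\ch L_{\mathfrak{osp}_{p(m)|2n}}(\nu_\delta-s\xi)\cdot\ch L_{\mathfrak{o}_m}(\nu_\vareps),
\]
and computing a single weight space of $L(\nu)$ as the convolution of the three factors — using $\dim L_{\mathfrak{o}_m}(\nu_\vareps)_\nu\leq 1$ for each weight (true for $m=3,4$ since $\mathfrak{o}_m\cong\mathfrak{sl}_2$ or $\mathfrak{sl}_2\times\mathfrak{sl}_2$) and $\dim L_{\mathfrak{osp}_{p(m)|2n}}(\nu_\delta-s\xi)_\nu\leq D$ — yields at once $\dim L(\nu)_{\nu-\mu}\leq 2^{2sn}D$. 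For $m=3$ one has $s=1$, so $2^{2sn}=2^{2n}$ and this is the claimed bound.

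The case $m=4$ is the main obstacle: there $s=2$, so the naive estimate above gives only $2^{4n}D$, and one must sharpen it to $2^{2n}D$. I would do this by exploiting that $\mathfrak{o}_4\cong\mathfrak{sl}_2\times\mathfrak{sl}_2$ and that its weight spaces are one-dimensional: one writes $\ch L_{\mathfrak{o}_4}(\nu_\vareps)$ as a product of two $\mathfrak{sl}_2$-characters and regroups them with the two blocks $\prod_{j=1}^{n}(1+e^{-\vareps_i-\delta_j})(1+e^{\vareps_i-\delta_j})$ ($i=1,2$) of the product above; each of the two resulting factors is the character of a bounded module of $\mathfrak{osp}(2|2n)$-type and therefore contributes a factor at most $2^{n}$ to a weight multiplicity, while the surviving $\mathfrak{sp}_{2n}$-factor contributes $D$, giving $2^{n}\cdot 2^{n}\cdot D=2^{2n}D$. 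Making this regrouping precise and checking that it indeed dominates the convolution that computes $\dim L(\nu)_{\nu-\mu}$ — rather than merely the product of the maximal multiplicities — is the computational heart of the argument.
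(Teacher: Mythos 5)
Your overall route is the paper's route: the equivalence comes from \Cor{cor1}(i),(iii), and the degree bound is obtained by fixing the affine subspace $\Lambda=\{\nu:\nu_{\mathfrak{sp}_{2n}}=\lambda_{\mathfrak{sp}_{2n}}\}$, verifying the multiplicity bound on the strongly typical locus $\Lambda_{st}$ via \Cor{corapp}/\Cor{corbounded} (after switching to the base $\Sigma_{\mathfrak{o}_m}$ and checking $\nu'_{\mathfrak{sp}_{2n}}-s\xi=\nu_{\mathfrak{sp}_{2n}}$), and then extending to all of $\Lambda$ by semicontinuity of $\rank S_\mu(\nu)$. So the skeleton is fine. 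The problem is that the two places you flag as ``needing extra care'' are left unproved, and they are genuine gaps --- indeed they are exactly the two places where the paper's own proof is silent.

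First, the $m=4$ constant. With $s=2$, \Cor{corbounded} gives $2^{2sn}\deg L_1=2^{4n}\deg L_1$, and the paper's proof of \Thm{thm34} nevertheless writes $2^{2n}$ when quoting it; so the argument on record only justifies $2^{2sn}$. Your proposed repair does not close the gap: even granting that each regrouped factor has weight multiplicities at most $2^n$, the multiplicity of a weight in a product of characters is a convolution, not a product of maxima, and redoing the convolution with your two regrouped factors again yields $2^{2n}\cdot 2^{2n}\cdot\deg L_1$, since the $\delta$-components of a weight can be split between the two blocks in up to $2^{2n}\times 2^{2n}$ ways. (Moreover the two $\mathfrak{sl}_2$-factors of $\mathfrak{o}_4$ live in the directions $\vareps_1\pm\vareps_2$, so they do not pair cleanly with the $i=1$ and $i=2$ blocks of $\prod_{i,j}(1+e^{-\vareps_i-\delta_j})(1+e^{\vareps_i-\delta_j})$.) Either the constant should be read as $2^{2sn}$ --- in which case the naive estimate suffices and your $m=4$ difficulty disappears --- or a genuinely new argument is required; you have not supplied one. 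Second, the degenerate $m=3$ case: if $(\lambda+\rho,\delta_n)=0$, which is compatible with $L_{\mathfrak{osp}(1|2n)}(\lambda_{\mathfrak{sp}_{2n}})$ being an infinite-dimensional bounded module (the value $y_n=0$ is allowed in (\ref{dede})), then every $\nu\in\Lambda$ fails strong typicality at the odd nonisotropic root $\delta_n$, so $\Lambda_{st}=\emptyset$ and the density argument collapses. A ``nearby typical weight'' is not available inside the bounded locus, because the boundedness conditions force the $\delta$-coordinates to be integral, hence discrete; so this case needs a separate argument (the paper does not give one either). You deserve credit for isolating both difficulties, but as written the proposal does not prove the theorem for $m=4$, nor for $m=3$ with $(\lambda+\rho,\delta_n)=0$.
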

\begin{proof}
  Assume that  $L_{\mathfrak{osp}_{p(m)|2n}}(\lambda_{\delta})$  is
bounded. Set
$$\Lambda:=\{\nu\in\fh^*|\ \nu_{\mathfrak{sp}_{2n}}=\lambda_{\mathfrak{sp}_{2n}}\}$$
and let $\Lambda_{st}$ be the
set of strongly typical weights in $\Lambda$.

Set $\Sigma:=\Sigma_{\mathfrak{osp}_{p(m)|2n}}$ and $\Sigma':=\Sigma_{\mathfrak{o}_m}$; denote by $\rho$ (resp., $\rho'$)
the Weyl vector for $\Sigma$ (resp., $\Sigma'$). Observe that
any simple highest weight  $\mathfrak{o}_{m}$-module
has degree $1$ (since $\mathfrak{o}_{3}\cong \fsl_2$ and
$\mathfrak{o}_{4}\cong \fsl_2\times\fsl_2$). If $\nu\in\Lambda_{st}$, then
$L(\nu)=L(\Sigma',\nu')$ with $\nu'+\rho'=\nu+\rho$ and~\Cor{corbounded}  gives

$$\dim L(\nu)_{\nu-\mu}=  \dim L(\Sigma',\nu')_{\nu-\mu}\leq 2^{2n}
\deg\, L_{\mathfrak{osp}_{p(m)|2n}}(\nu'_{\mathfrak{sp}_{2n}}-\left\lfloor  \frac{m}{2} \right\rfloor  \sum_{i=1}^n\delta_i)$$
for each $\mu$. One has
$$\nu'_{\mathfrak{sp}_{2n}}-\left\lfloor  \frac{m}{2} \right\rfloor \sum_{i=1}^n\delta_i=
\nu_{\mathfrak{sp}_{2n}}+\rho_{\mathfrak{sp}_{2n}}-\rho'_{\mathfrak{sp}_{2n}}-\left\lfloor \frac{m}{2}  \right\rfloor \sum_{i=1}^n\delta_i=
\nu_{\mathfrak{sp}_{2n}}.$$
Therefore  for   $\nu\in\Lambda_{st}$ one has
\begin{equation}\label{osp4m}
\dim L(\nu)_{\nu-\mu}\leq 2^{2n}
\deg\, L_{\mathfrak{osp}_{p(m)|2n}}(\nu_{\mathfrak{sp}_{2n}}).
\end{equation}
Since $\Lambda_{st}$ is Zariski dense in $\Lambda$, we can use again the last argument in the proof of~\Thm{thmm2}. Thus ~(\ref{osp4m}) holds for each $\nu\in\Lambda$.
\end{proof}



\begin{thebibliography}{MMM}

\bibitem[BBL]{BBL} G. Benkart, D. Britten, F. Lemire,  {\em Modules with bounded
weight multiplicities for simple Lie algebras}, Math. Z.  {\bf
225} (1997),  333--353.

\bibitem[BL]{BL} D. Britten, F. Lemire, {\em A classification of simple Lie modules
having a $1$-dimensional weight space},  Trans. Amer. Math.
Soc. {\bf 299} (1987), 683--697.


\bibitem[Co]{Co} K. ~Coulembier, {\em On a class of tensor product representations for orthosymplectic superalgebras},  J. Pure Appl. Algebra {\bf 217} (2013) 819--837.


 \bibitem[DGK]{DGK}  V.~V.~Deodhar, O.~Gabber and V.~Kac {\em Structure of some categories
 of representations of infinite-dimesnional Lie algebras},{\bf 45} (1982), 92--116.


\bibitem[DMP]{DMP} I. Dimitrov, O. Mathieu, I. Penkov, {\em On the structure of weight
modules,}  Trans. Amer. Math. Soc. {\bf 352} (2000),
2857--2869.

\bibitem[En]{En} T.~J.~Enright, {\em On the fundamental series of a real semisimple Lie algebra:
their irreducibility, resolutions and multiplicity formulae}, Ann Math. {\bf 110} (1979) 1--82.

\bibitem[FGG]{FGG} T.~Ferguson,
  M.~Gorelik, D.~Grantcharov,  {\em Bounded highest weight modules of
    $\mathfrak{osp}(1,2n)$}, Proc. Symp. Pure Math, AMS, vol. 92 (2016),
  135--143.


\bibitem[F]{F} S. ~Fernando, {\em Lie algebra modules with finite dimensional weight
spaces I}, Trans. Amer. Math. Soc.  {\bf 322} (1990), 757--781.

\bibitem[Fu]{Fu} V. Futorny, {\em The weight representations of semisimple finite
dimensional Lie algebras}, Ph. D. Thesis, Kiev University, 1987.


\bibitem[G1]{G1} M.~Gorelik, {\em Annihilation theorem and Separation theorem
for basic classical Lie superalgebras},
Journal of AMS, {\bf 15}, no. 1 (2002), 113--165.

\bibitem[G2]{G2} M.~Gorelik, {\em Strongly typical representations of
  the basic classical Lie
  superalgebras}, Journal of AMS, {\bf 15}, no. 1 (2002), 167--184.

 \bibitem[GG]{GG} M. Gorelik,  D. Grantcharov, {\em  Bounded highest weight modules of ${\mathfrak q}(n)$}, {\it Int. Math. Res. Not.}  {\bf 2014(22)} (2014) 6111--6154.

\bibitem[Gr]{Gr} D. Grantcharov, {\em Explicit realizations of simple weight modules of classical Lie superalgebras,}  Cont. Math. {\bf 499} (2009) 141--148.



\bibitem[GK]{GK} M.~Gorelik, V.~Kac, {\em Characters of (relatively) integrable modules over affine Lie superlagebras}, Japan. J. Math. {\bf 10}, (2015) 135--235.

\bibitem[GS]{GS} M.~Gorelik, V.~Serganova {\em Quasi-integrable modules and other applications of Enright functor for superalgebras}, preprint.


\bibitem[GrS1]{GrS1}  D. Grantcharov, V. Serganova, {\em Category of $\mathfrak{sp}(2n)$-modules with bounded weight multiplicities,} Mosc. Math. J. 6 (2006) 119--134.

\bibitem[GrS2]{GrS2}  D. Grantcharov, V. Serganova,  {\em On weight modules of algebras of twisted differential operators on the projective space,} Transform. Groups 21 (2016) 87--114.

\bibitem[H]{H} C.~Hoyt, {\em Weight modules for $D(2,1,\alpha)$},  Advances in Lie superalgebras,  91-100, Springer INdAM Ser., 7, Springer, Cham, (2014).

\bibitem[IK]{IK} K.~Iohara, Y.~Koga, {\em Enright functors for Kac-Moody superalgebras},
Abh. Math. Semin. Univ. Hambg. {\bf 82}, (2012), no. 2, 205--226.

\bibitem[Jo]{Jo} A. Joseph, {\em Some ring theoretical techniques and open problems in enveloping algebras}, Non-commutative Rings, ed S. Montgomery and L. Small, Birkh\"auser  (1992), 27--67.


\bibitem[K1]{K1} V.~G.~Kac, {\em Lie superalgebras}, Adv. Math. {\bf 26} (1977), 8–-96.


\bibitem[K3]{Kbook}
V.~G.~Kac, {\em Infinite-dimensional Lie algebras}, Third edition, Cambridge University Press, 1990.



\bibitem[KT1]{KT98}  M.~Kashiwara, T.~Tanisaki, {\em Kazhdan-Lusztig conjecture
for symmetrizable Kac-Moody algebras III. Positive rational case},
in Mikio Sato: a great Japanese mathematician of the twentieth century, Asian J. Math. {\bf 2} (1998), no. 4,
779-832.


\bibitem[KT2]{KT}  M.~Kashiwara, T.~Tanisaki, {\em Characters of the irreducible modules with non-critical
highest weights over affine Lie algebras}, in  Representations and quantizations (Shanghai, 1998),  275-296,
China High. Educ. Press, Beijing, 2000.


\bibitem[M]{M} O.~Mathieu {\em Classification of irreducible weight modules}, Ann. Inst. Fourier {\bf 50} (2000), 537--592.


\bibitem[S1]{S2} V.  Serganova, {\em Finite-dimensional representations of algebraic supergroups}, Proceedings of  International Congress of Mathematicians, Seoul, 2014, 603--632.

\bibitem[S2]{Sint} V.~Serganova,
{\em Kac-Moody superalgebras and integrability}, in
Developments and trends in infinite-dimensional Lie theory, 169--218,
Progr. Math., 288, Birkh\"auser Boston, Inc., Boston, MA, 2011.

\bibitem[Sh]{Sh} N.~N.~Shapovalov {\em
A certain bilinear form on the universal enveloping algebra of a complex semisimple Lie algebra}, (Russian) Funkcional. Anal. i Priložen. 6 (1972), no. 4, 65–70.


\end{thebibliography}
\end{document}